\documentclass[11pt]{article}
\usepackage{mathtools}
\usepackage[utf8]{inputenc}
\usepackage{amssymb,amsmath,amsfonts,eucal,mathrsfs,amsthm}
\usepackage[colorinlistoftodos]{todonotes}
\usepackage{xcolor}
\usepackage{enumitem}
\usepackage{accents}
\newtheorem{theorem}{Theorem}
\newtheorem{proposition}[theorem]{Proposition}
\newtheorem{lemma}[theorem]{Lemma}

\newtheorem{corollary}[theorem]{Corollary}

\newtheorem{example}[theorem]{Example}

\theoremstyle{definition}
\newcommand{\R}{\mathbb{R}}
\newcommand{\Z}{\mathbb{Z}}
\newcommand{\Q}{\mathbb{Q}}
\newcommand{\Sf}{\mathbb{S}}
\newcommand{\C}{\mathbb{C}}
\newcommand{\Hy}{\mathbb{H}}

\newcommand{\spa}{\mbox{span}}

\newcommand{\Ric}{\mbox{Ric}}

\newcommand{\trace}{\mbox{tr\,}}

\def\<{{\langle}}
\def\>{{\rangle}}
\def\B{\mathcal{B}}
\def\RP{\mathord{\mathbb R}\mathord{P}}
\def\CP{\mathord{\mathbb C}\mathord{P}}
\def\HP{\mathord{\mathbb H}\mathord{P}}
\def\OP{\mathord{\mathbb O}\mathord{P}}

\def\n{\nabla}

\def\a{\alpha}

\def\be{\begin{equation}}
\def\ee{\end{equation}}

\begin{document}

\title{Geometric and topological rigidity of 
pinched submanifolds in Riemannian manifolds}
\author{Theodoros Vlachos}
\date{}
\maketitle

\renewcommand{\thefootnote}{\fnsymbol{footnote}} 
\footnotetext{\emph{2020 Mathematics Subject Classification.} 53C40, 53C42.} 
\renewcommand{\thefootnote}{\arabic{footnote}} 

\renewcommand{\thefootnote}{\fnsymbol{footnote}} 
\footnotetext{\emph{Keywords and phrases.} Pinching, 
length of the second fundamental form, mean curvature, 
isotropic curvature, symmetric spaces, homology and 
homotopy groups, Betti numbers, 
Bochner-Weitzenb\"ock operator.} 
\renewcommand{\thefootnote}{\arabic{footnote}}

\begin{abstract}
We study the rigidity of compact submanifolds of Riemannian manifolds of arbitrary codimension that satisfy a sharp pinching condition involving the norm of the second fundamental form and the mean curvature. Without assuming that the ambient manifold is a space form, we 
show that this condition imposes strong geometric and topological restrictions on the submanifold. %Our arguments and results differ significantly between the higher-dimensional and the four-dimensional cases. In higher dimensions, we combine results on manifolds with nonnegative isotropic curvature with the Bochner technique, whereas the four-dimensional case requires additional tools specific to four-dimensional geometry. %The resulting theorems are sharp and extend several previous results in the literature, particularly sphere theorems, without additional assumptions.
The resulting theorems are sharp and provide extensions of several known results in the literature, particularly sphere theorems, without requiring additional assumptions.
%We show that this pinching condition determines the geometry or topology of the submanifold. 
%A key observation is that the condition forces the submanifold to have nonnegative isotropic curvature in the sense of Micallef and Moore. Moreover, we derive structural information about the second fundamental form at points where the isotropic curvature vanishes. 
%The arguments and results differ between the higher-dimensional and the four-dimensional cases. In higher dimensions, among others our approach combines results on manifolds with nonnegative isotropic curvature with the Bochner technique, whereas the four-dimensional case requires additional tools specific to four-dimensional geometry. The resulting theorems are sharp and extend previous results by several authors without additional topological or geometric assumptions.
\end{abstract}

\tableofcontents

\section{Introduction}
A fundamental problem in differential geometry is to understand the interplay between the geometry and topology of Riemannian manifolds. In the context of submanifold theory, it is natural to investigate how pinching conditions involving intrinsic or extrinsic curvature invariants influence the geometric and topological rigidity of submanifolds.

For minimal submanifolds of spheres with sufficiently pinched second fundamental form, 
Simons \cite{Sim} established a fundamental result. Subsequently, Chern, do Carmo, and Kobayashi \cite{CdCK} proved a rigidity theorem. These works have led to numerous developments on curvature pinching, primarily for submanifolds of space forms; see, for example, \cite{adc, AB, font, HW, a, OV, p, SX, v3, v4, Xu, XGu, XG}.

In this paper, we investigate the
geometric and topological rigidity of submanifolds 
$f\colon M^n \to \accentset{\sim}{M}^{n+m}$ 
in Riemannian manifolds that satisfy the pointwise 
pinching condition
\be\label{1}
S\leq\frac{16}{3}\big(\accentset{\sim}{K}^f_{\min}-\frac{1}{4}\accentset{\sim}{K}^f_{\max}\big)
+\frac{n^2H^2}{n-2}, \tag{$\ast$}
\ee
without assuming that the ambient manifold has constant 
sectional curvature. 
%\vspace{2ex}
%\vspace*{-\baselineskip}
Here, the functions 
$\accentset{\sim}{K}^f_{\max},\accentset{\sim}{K}^f_{\min}\colon M^n\to\R$ 
are defined by 
\begin{align*}
\accentset{\sim}{K}^f_{\max}(x)&=
\max\big\{\accentset{\sim}{K} (f_{*x}\sigma):\sigma\subset T_xM\;\text{is a two-plane}
\big\},\\
\accentset{\sim}{K}^f_{\min}(x)&=
\min\big\{\accentset{\sim}{K} (f_{*x}\sigma):\sigma\subset T_xM\;\text{is a two-plane} 
\big\},
\end{align*}
%\vspace{0.5ex}
%\vspace*{-\baselineskip}
%\noindent
where $\accentset{\sim}{K} (f_{*x}\sigma)$ denotes the 
sectional curvature of $\accentset{\sim}{M}^{n+m}$ along 
the two-plane $f_{*x}\sigma\subset T_{f(x)}\accentset{\sim}{M}$ 
at the point $f(x)$. 
Moreover, $S$ denotes the 
\emph{squared length} of the second
fundamental form
$\alpha_f \colon TM \times TM \to N_fM$,
which takes values in the normal bundle $N_fM$.
The \emph{mean curvature} is defined by 
$H = \| \mathcal H_f \|$, where the
\emph{mean curvature vector field} is given by
$
\mathcal H_f = \frac{1}{n}\,\mathrm{tr}\, \alpha_f
$ 
and $\mathrm{tr}$ denotes the trace.

The pinching condition \eqref{1} was introduced by Xu 
and Zhao \cite{XZ} in a slightly stronger form, mainly 
as a strict inequality holding at every point. Later, Xu 
and Gu \cite{XG1} studied Einstein submanifolds of 
Riemannian manifolds under the same condition, 
assuming that it holds strictly at some point. Related 
results were obtained in \cite{cuisun,XG}. Note that 
condition \eqref{1} is invariant under scaling of the 
ambient metric. 
Howard and Wei \cite{HW} investigated 
the topology of submanifolds $f\colon M^n\to\Q_c^{n+m}$ 
satisfying the strict inequality 
$$
S<2kc+\frac{n^2H^2}{n-k}
$$ 
at every point, for an integer $1\leq k\leq n/2$, 
where $\Q_c^{n+m}$ is a space form of curvature 
$c\geq0$. In the case $k=2$, 
their condition coincides with the strict version of \eqref{1} 
when the ambient manifold is a space form. 
Similar pinching conditions to \eqref{1} were 
studied in \cite{ps} via mean curvature flow when 
the ambient space is a complex or quaternionic 
projective space. In addition, sphere theorems for submanifolds in K\"ahler manifolds under 
pinching conditions similar to ours were 
obtained in \cite{ss1,ss2}. 

The main objective of this paper is to study the geometric 
and topological rigidity of compact submanifolds of 
Riemannian manifolds satisfying condition \eqref{1}, 
without assuming that the ambient manifold is a space form. 
Our results show that this pinching condition imposes 
strong geometric and topological restrictions on the 
submanifold. As observed in \cite{XG,XZ}, condition 
\eqref{1} implies nonnegative isotropic curvature, a 
notion introduced by Micallef and Moore \cite{MM}. 
A key ingredient in our approach is the extraction of 
precise information about the structure of the 
second fundamental form at points where the 
isotropic curvature vanishes. The arguments 
and results differ between the higher-dimensional 
and four-dimensional cases. In higher dimensions, 
we combine results on manifolds with nonnegative 
isotropic curvature with the Bochner technique, 
whereas in four dimensions additional tools specific to 
four-dimensional geometry are required. 

We begin by recalling some definitions. 
A vector $\delta$ in the normal space $N_fM(x)$ is called a 
\emph{principal normal} of an isometric immersion 
$f\colon M^n\to\accentset{\sim}{M}^{n+m}$ at 
$x\in M^n$ if the tangent subspace
$$
E_\delta(x)=\left\{X\in T_xM:\alpha_f(X,Y)
=\<X,Y\>\delta\;\;\text{for all}\;\;Y\in T_xM\right\}
$$
is nontrivial. Its dimension is called 
the \emph{multiplicity} of $\delta$. If the multiplicity is 
at least two, $\delta$ is called a 
\emph{Dupin principal normal}. 
The \emph{relative nullity} subspace $\mathcal D_f(x)$ 
of $f$ at $x\in M^n$ is the kernel of the second 
fundamental form at $x$, namely
$$
\mathcal D_f(x)=\left\{X\in T_xM:\alpha_f(X,Y)
=0\;\;\text{for all}\;\;Y\in T_xM\right\}.
$$
Its dimension is called the \emph{index of relative nullity} 
at $x$. 
The points where $f$ is \emph{umbilical} 
are precisely the zeros of the traceless part 
of the second fundamental form. 
Throughout the paper, all 
submanifolds under consideration are assumed 
to be connected.

The first main result concerns submanifolds of 
dimension $n\geq5$ satisfying the pinching 
condition \eqref{1} and can be stated as follows. 

\begin{theorem}\label{n5}
Let $f\colon M^n\to\accentset{\sim}{M}^{n+m}$, 
$n\geq 5$, be an isometric immersion of a 
compact Riemannian manifold. Assume 
that inequality \eqref{1} is satisfied and that 
$\accentset{\sim}{K}^f_{\min}\geq0$ at 
every point. Then one of the 
following holds: 
\vspace{1ex}

\noindent $(i)$ The manifold $M^n$ is 
diffeomorphic to a spherical space form.
\vspace{1ex}

\noindent $(ii)$ The universal cover of $M^n$ 
is isometric to a Riemannian product 
$\R\times N$, where $N$ is diffeomorphic 
to $\Sf^{n-1}$ and has nonnegative isotropic 
curvature.
\vspace{1ex}

\noindent $(iii)$ 
Equality holds in \eqref{1} at every point, 
and one of the following occurs: 
\vspace{0.5ex}
\begin{enumerate}[topsep=1pt,itemsep=1pt,partopsep=1ex,parsep=0.5ex,leftmargin=*, label=(\roman*), align=left, labelsep=-0.5em]
\item [(a)] 
The immersion $f$ is totally geodesic, and either 
$M^n$ is isometric to the complex projective space 
$\CP^{n/2}$, the quaternionic projective space 
$\HP^{n/4}$, the Cayley plane $\OP^2$ (all endowed 
with their canonical Riemannian metrics), or $M^n$ 
is isometric to the twisted complex projective space 
$\CP^{2k-1}/\Z_2$, where the $\Z_2$-action arises 
from an anti-holomorphic involutive isometry 
with no fixed points.
\item [(b)] 
The manifold $M^n$ is flat, $f$ is totally geodesic, and 
$\accentset{\sim}{K}^f_{\min}=\accentset{\sim}{K}^f_{\max}=0$ 
everywhere. 
\item [(c)] 
The manifold $M^n$ is a quotient 
$(\R^2\times\Sf^{n-2}(r))/\Gamma$, where 
$\Gamma$ is a discrete, fixed-point-free, cocompact 
subgroup of the isometry group of the standard Riemannian 
product $\R^2\times\Sf^{n-2}(r)$, and $\accentset{\sim}{K}^f_{\min}
=\accentset{\sim}{K}^f_{\max}=0$ 
everywhere. Moreover, $f$ has index of relative 
nullity $2$, and $\delta=n\mathcal{H}_f/(n-2)$ is a Dupin 
principal normal vector field of $f$ with multiplicity $n-2$ 
satisfying $E_\delta=\mathcal D^\perp_f$.
\end{enumerate}

\noindent
In particular, if $n$ is even and the second Betti number 
satisfies $\beta_2(M^n)>0$, then 
$f$ is totally geodesic, and $M^n$ is isometric 
to the complex projective space $\CP^{n/2}$, 
endowed with the Fubini-Study metric up to scaling of the 
ambient metric. 
\end{theorem}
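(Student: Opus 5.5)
The plan is to show that \eqref{1} forces nonnegative isotropic curvature on $M^n$, to identify the points where isotropic curvature vanishes, and then to invoke the classification of compact manifolds with nonnegative isotropic curvature in dimensions $n\geq5$.

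\textbf{Step 1: an algebraic isotropic-curvature estimate.} Fix an orthonormal four-frame $e_1,\dots,e_4$ at $x\in M^n$. By the Gauss equation, the isotropic curvature $K_{1313}+K_{1414}+K_{2323}+K_{2424}-2R_{1234}$ splits into an ambient part and an extrinsic part. The ambient part is handled with the Berger-type estimate $|\accentset{\sim}{R}_{1234}|\leq\frac{2}{3}(\accentset{\sim}{K}^f_{\max}-\accentset{\sim}{K}^f_{\min})$. This gives the bound
\[
\text{ambient part}\;\geq\; 4\accentset{\sim}{K}^f_{\min}-\tfrac43(\accentset{\sim}{K}^f_{\max}-\accentset{\sim}{K}^f_{\min})
=\tfrac{16}{3}\big(\accentset{\sim}{K}^f_{\min}-\tfrac14\accentset{\sim}{K}^f_{\max}\big),
\]
which is exactly where the constant $16/3$ comes from. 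The extrinsic part is bounded below by an algebraic inequality of Howard--Wei/Xu--Zhao type. I would decompose $\alpha_f=\mathring\alpha+\mathcal H_f\langle\cdot,\cdot\rangle$ and prove that the extrinsic isotropic term is at least $\frac12\big(\frac{n^2H^2}{n-2}-S\big)$ up to the ambient term. Under \eqref{1} this yields nonnegative isotropic curvature. The key extra output is the equality case. Equality in the algebraic inequality for some frame forces two things: equality in \eqref{1} at $x$, and the traceless part $\mathring\alpha$ restricted along the relevant frame having a specific shape. That shape should be either $\alpha_f=0$, or $\alpha_f$ vanishing on a $2$-plane $\mathcal D$ and equal to $\langle\cdot,\cdot\rangle\,n\mathcal H_f/(n-2)$ on $\mathcal D^\perp$. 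Equality also forces equality in the curvature estimate, so $\accentset{\sim}{K}^f_{\min}=\accentset{\sim}{K}^f_{\max}$ whenever $\alpha_f\neq0$ there. I expect this equality analysis, carried out in arbitrary codimension, to be the main obstacle. I would first treat the case $m=1$ via eigenvalues and then reduce the general case using the normal components $\alpha^\xi$ with a Cauchy--Schwarz step.

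\textbf{Step 2: classification.} $M^n$ is compact with nonnegative isotropic curvature and $n\geq5$. If the isotropic curvature is positive somewhere, Brendle--Schoen-type results together with Micallef--Wang/Seshadri's classification show that either $M$ has positive isotropic curvature after a Ricci flow deformation, or its universal cover splits. In the positive case the condition $\accentset{\sim}{K}^f_{\min}\geq0$ (giving a Ricci-type positivity) and the Bochner technique on forms rule out nontrivial $\pi_1$ obstructions. This leads to (i), a spherical space form, or to (ii), $\R\times N$ with $N\cong\Sf^{n-1}$. If the isotropic curvature vanishes identically, then equality holds in \eqref{1} everywhere and Step 1 gives the pointwise structure of $\alpha_f$. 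Where $f$ is totally geodesic, $M$ has nonnegative isotropic curvature and vanishing isotropic curvature. By Seshadri's classification, $M$ is then locally symmetric of the types listed, namely $\CP$, $\HP$, $\OP^2$, $\CP^{2k-1}/\Z_2$, or flat; this gives (a) and (b). On the open set where $\alpha_f\neq0$, the Codazzi equations applied to the nullity distribution $\mathcal D$ and the Dupin normal $\delta$ show that $\mathcal D$ is parallel and that $E_\delta$ is totally umbilical with constant curvature. A de Rham argument then gives the local product $\R^2\times\Sf^{n-2}(r)$, and a connectedness and analyticity argument globalizes this to (c).

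\textbf{Step 3: the Betti number statement.} If $\beta_2>0$ and $n$ is even, cases (i) and (ii) are excluded since $\beta_2=0$ for those. Case (c) is excluded because a harmonic $2$-form there would be parallel along the $\R^2$ factor, and this is incompatible with the flat $\R^2$ factor and $\Gamma$ for $n\geq5$. Flat manifolds give $\beta_2>0$ only when the ambient curvature vanishes identically along $M$; this is excluded by the scaling normalization or handled separately. Among the symmetric spaces in (a), only $\CP^{n/2}$ has $\beta_2>0$.
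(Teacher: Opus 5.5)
Your equality analysis in Step 1 claims much more than is true, and the vanishing-isotropic-curvature half of Step 2 rests on it.

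A frame $F$ with zero isotropic curvature at $x$ gives only what Proposition \ref{propu} records: \eqref{c1}--\eqref{c5}, \eqref{c22}, and $V_F^\perp\subset E_\delta(x)$ with $\delta=n\mathcal H_f/(n-2)$. The block on $V_F$ stays largely free. Already in Euclidean space of codimension $\ge2$, consider $\a_{11}=\a_{22}=a$ and $\a_{33}=\a_{44}=b$ with $a\perp b$ both nonzero, $\a_f=\<\cdot,\cdot\>(a+b)$ on $V_F^\perp$, and all other $\a_{ij}=0$. This is an equality configuration with trivial relative nullity. More generally, $\a_{13},\a_{14}\neq0$ are allowed, and so is $\accentset{\sim}{K}^f_{\max}=4\accentset{\sim}{K}^f_{\min}\neq\accentset{\sim}{K}^f_{\min}$ together with $\a_f\neq0$. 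So there is no pointwise dichotomy ``$\a_f=0$, or a $2$-dimensional nullity plus a Dupin normal of multiplicity $n-2$ with $\accentset{\sim}{K}^f_{\min}=\accentset{\sim}{K}^f_{\max}$''. Your Codazzi/de Rham argument therefore has nothing to start from.

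A smaller error: Lemma \ref{alg2} bounds the extrinsic term below by $\frac{n^2H^2}{n-2}-S$ itself. With your factor $\frac12$, the estimate no longer implies nonnegative isotropic curvature once $\accentset{\sim}{K}^f_{\max}>4\accentset{\sim}{K}^f_{\min}$.

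Nor does vanishing isotropic curvature by itself give local symmetry. One needs global total geodesy first; equality in \eqref{1} then yields $\accentset{\sim}{K}^f_{\max}=4\accentset{\sim}{K}^f_{\min}$, i.e.\ weak $1/4$-pinching, and Brendle--Schoen's sphere theorem applies. The paper obtains total geodesy globally. For locally irreducible $M$ it uses the trichotomy of Theorem \ref{acta}, and then:
\begin{itemize}
\item in the symmetric case, Proposition \ref{fillup}: irreducibility of the isotropy representation gives $\sum_F V_F^\perp=T_xM$, so $E_\delta=T_xM$ and hence $\delta=0$;
\item in the K\"ahler case, a parallel harmonic $2$-form in $\ker\B^{[2]}$ played against Proposition \ref{propeven} (Theorem \ref{ceven}).
\end{itemize}
Cases (b) and (c) come from the splitting of Theorem \ref{duke}, from Proposition \ref{propprod}, and, for $\hat M=\R^n$, from the relations \eqref{c1} and \eqref{c22}.

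The positive-isotropic-curvature half of Step 2 has no working mechanism. The condition $\accentset{\sim}{K}^f_{\min}\ge0$ is an ambient condition and gives no intrinsic Ricci positivity; in $\R^{n+m}$ it is vacuous. Positive isotropic curvature does not control $\pi_1$ either: $\#_k(\Sf^1\times\Sf^{n-1})$ carries PIC metrics (Micallef--Wang) and has $\beta_1=k$.

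The paper bridges exactly this point with Theorem \ref{acta}: a locally irreducible $M$ has compact universal cover, so infinite $\pi_1$ forces a splitting. Be aware, however, that Brendle--Schoen prove that trichotomy under the hypothesis that $M\times\R^2$ has nonnegative isotropic curvature. The PIC connected sums just mentioned are locally irreducible with infinite $\pi_1$, so the version quoted cannot hold.

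My own computation, which I have not seen in the literature, indicates the conclusion itself fails. Take a round sphere of radius $R$ in $\R^{n+1}$ and attach two thin tubular handles of radius $r\ll R$ through slowly flaring necks. Keep the negative profile curvature at most $0.4$ times the rotational principal curvatures; for $n\ge5$ the admissible ratio exceeds $\sqrt2-1$. By my check, this hypersurface satisfies $S<n^2H^2/(n-2)$ everywhere. It is diffeomorphic to $\#_2(\Sf^1\times\Sf^{n-1})$, which fits none of (i)--(iii). If that holds, this step cannot be closed as stated, by your argument or the paper's.

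Step 3 fails as well. The product $T^2\times\Sf^{n-2}(r)\to T^2\times\R^{n-1}$ of Section 6 is an instance of case (c) and has $\beta_2=1$, from the parallel area form of $T^2$. A flat torus totally geodesic in a flat torus is an instance of case (b) and has $\beta_2=\binom{n}{2}$. So the $\beta_2$ clause holds only when $\pi_1(M)$ is finite, which is the setting where the paper actually proves it (Theorem \ref{p1fn5}, via Theorem \ref{ceven}).
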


The second main result concerns four-dimensional submanifolds satisfying condition \eqref{1} that are not covered by Theorem \ref{n5}. We recall the notion of a \emph{$(2,0)$-geodesic} immersion, introduced by Eschenburg and Tribuzy \cite{ET} (see also \cite{ET1, eft}). An isometric immersion of a Kähler manifold is called $(2,0)$-geodesic if the $(2,0)$-component of its second fundamental form (with respect to the type decomposition) vanishes.

The result for four-dimensional submanifolds is 
stated as follows.

\begin{theorem}\label{n=4}
Let $f\colon M^4\to\accentset{\sim}{M}^{4+m}$ be an 
isometric immersion of a compact, oriented 
Riemannian four-manifold. Suppose that 
inequality \eqref{1} is satisfied at every point. 
Then one of the following holds:
\vspace{1ex}

\noindent $(i)$ $M^4$ is diffeomorphic to 
a spherical space form. 
\vspace{1ex}

\noindent $(ii)$ The universal cover of $M^4$ is 
isometric to a Riemannian product 
$\R\times N$, where $N$ is diffeomorphic to 
$\Sf^3$ and has nonnegative Ricci curvature. 
\vspace{1ex}

\noindent $(iii)$
Equality holds in \eqref{1} at every point, and 
one of the following occurs: 
\vspace{0.5ex}
\begin{enumerate}[topsep=1pt,itemsep=1pt,partopsep=1ex,parsep=0.5ex,leftmargin=*, label=(\roman*), align=left, labelsep=-0.5em]
\item [(a)] $M^4$ is a K\"ahler manifold biholomorphic 
to the complex projective plane $\CP^2$, equipped with a 
complex structure for which $f$ 
is $(2,0)$-geodesic. 
\item [(b)] $M^4$ is flat with first Betti number 
$2\leq\beta_1(M^4)\leq4$, and the immersion $f$ is totally 
umbilical, satisfying 
$\accentset{\sim}{K}^f_{\min}=
\accentset{\sim}{K}^f_{\max}=-H^2$ at every 
point. 
\item [(c)] $M^4$ is isometric to a Riemannian product 
$(N_1,g_1)\times(N_2,g_2)$, where each $N_i,i=1,2$, 
is diffeomorphic to $\Sf^2$ and has nonnegative 
Gaussian curvature. Moreover, $\accentset{\sim}{K}^f_{\min}
=\accentset{\sim}{K}^f_{\max}$ everywhere, and at 
any point where $f$ is not totally umbilical, 
there exist two distinct Dupin principal normals $\delta_1$ 
and $\delta_2$, both of multiplicity $2$, such that 
$$
\<\delta_1,\delta_2\>=-\accentset{\sim}{K}^f_{\min} 
\quad \text{and}\quad 
E_{\delta_i}=TN_i,\; i=1,2.
$$
\item [(d)] The universal cover of $M^4$ is 
isometric to a Riemannian product 
$(N_1,g_1)\times(N_2,g_2)$, where 
$N_2$ is diffeomorphic to $\Sf^2$, and either 
$(N_1,g_1)=(\R^2,g_0)$, where $g_0$ is the 
Euclidean flat metric and $N_2$ has nonnegative 
Gaussian curvature, or 
the Gaussian curvatures $K_{N_i}$ of $(N_i,g_i)$, 
$i=1,2$, satisfy 
$
\min K_{N_2}\geq-\min K_{N_1}>0.
$ 
Moreover, $\accentset{\sim}{K}^f_{\min}
=\accentset{\sim}{K}^f_{\max}$ everywhere, and 
at any point where $f$ is not totally umbilical 
there exist two distinct Dupin principal normals 
as in $(c)$ above.
\end{enumerate}
\end{theorem}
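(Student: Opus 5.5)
The plan is to start from the pointwise algebraic consequence of \eqref{1}, already noted in \cite{XG,XZ}: the curvature operator of $M^4$ has nonnegative isotropic curvature. Since $M^4$ is oriented, we split $\Lambda^2=\Lambda^+\oplus\Lambda^-$ and write the curvature operator in block form with Weyl parts $W^\pm$, traceless Ricci part $B$ and scalar curvature $s$. In dimension four, nonnegative isotropic curvature is equivalent to the two conditions $s/6-2\lambda_{\max}(W^\pm)\ge 0$ together with a mixed condition involving $B$.

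The first concrete step is to redo the Gauss equation computation of \cite{XZ} with equality cases tracked. Concretely, for an orthonormal frame $e_1,\dots,e_4$ we use
\[
K_{13}+K_{14}+K_{23}+K_{24}-2R_{1234}\ \geq\ \Big(\frac{16}{3}\big(\accentset{\sim}{K}^f_{\min}-\tfrac14\accentset{\sim}{K}^f_{\max}\big)+\frac{n^2H^2}{n-2}-S\Big)\cdot c\ \geq 0 .
\]
The constant $c>0$ is fixed, and the estimate comes from an algebraic inequality in the traceless second fundamental form. From this we extract, at points where some isotropic curvature vanishes, that equality holds in \eqref{1} and $\accentset{\sim}{K}^f_{\min}=\accentset{\sim}{K}^f_{\max}$ there. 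The second fundamental form must then be either umbilical or of the form with two Dupin principal normals $\delta_1,\delta_2$ of multiplicity $2$ with $\langle\delta_1,\delta_2\rangle=-\accentset{\sim}{K}^f_{\min}$. We also need the $(2,0)$-type alternative, corresponding to the vanishing direction lying in $\Lambda^+$ alone. I expect this pointwise linear-algebra classification of the equality cases to be the main technical obstacle, since one must handle arbitrary codimension and a non-space-form ambient curvature, where only $\accentset{\sim}{K}^f_{\min}$ and $\accentset{\sim}{K}^f_{\max}$ are controlled.

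Next come the global arguments. If isotropic curvature is positive somewhere, we deform and use Micallef–Moore together with the Hamilton/Chen–Zhu classification of compact four-manifolds with positive isotropic curvature, plus a Bochner argument on harmonic $1$-forms and on self-dual and anti-self-dual $2$-forms (the Weitzenb\"ock formula on $\Lambda^\pm$ has curvature term $s/3-2W^\pm$). The aim is to show $\beta_1=0$ unless the universal cover splits a line, which gives $(ii)$ via de Rham and the Cheeger–Gromoll splitting. Likewise $b^\pm=0$ unless there is a parallel self-dual form. Absent splitting and parallel forms, $M^4$ is a spherical space form, giving $(i)$. This step could be delicate, because positivity holds only at one point rather than everywhere.

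In the remaining case the isotropic curvature vanishes somewhere and the Bochner formula produces a nontrivial parallel form. A parallel harmonic self-dual $2$-form gives a K\"ahler structure; the pointwise analysis then forces $W^-$-type vanishing of the $(2,0)$ part, so $f$ is $(2,0)$-geodesic. Positivity elsewhere, with $\beta_2=1$, and the classification of K\"ahler surfaces with nonnegative isotropic curvature then yield $\CP^2$, which is $(a)$. A parallel decomposable form gives a local product $N_1\times N_2$, with $E_{\delta_i}=TN_i$, and Gauss curvature relations from the Dupin structure. Depending on signs and compactness these give $(c)$, or, when $N_1$ is noncompact, $(d)$ with the stated curvature inequality, which comes from nonnegativity of the mixed isotropic curvature $K_{N_1}+K_{N_2}\ge0$. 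Finally, the flat case with $f$ umbilical gives $(b)$; there Bieberbach theory, combined with orientability and the nontriviality of the parallel forms, bounds $2\le\beta_1\le4$.
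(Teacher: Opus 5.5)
Your proposal has a genuine gap in the global part.

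\textbf{The splitting step.} Your route to $(ii)$ uses a Bochner argument on harmonic $1$-forms and Cheeger--Gromoll, and both need $\Ric\geq0$. That bound does not follow from nonnegative isotropic curvature in dimension four. That condition involves only $W^\pm$ and $s$, with no constraint on the traceless Ricci block, contrary to what you wrote. Nor does it follow from \eqref{1}: the compact examples $\varSigma\times\Sf^2(R)\to\Hy^7(r)/\mathsf{j}(\Gamma)$ of Section 6 satisfy \eqref{1} and have negative Ricci curvature along $\varSigma$. The Hamilton/Chen--Zhu classification, used after Seshadri's deformation, only gives the diffeomorphism type of $M^4$. It does not exclude connected sums such as $(\Sf^1\times\Sf^3)\#(\Sf^1\times\Sf^3)$, and it cannot give an isometric splitting of the universal cover for the original metric.

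\textbf{The nowhere-positive case.} Vanishing isotropic curvature does not create harmonic forms; Bochner only makes existing ones parallel. Take $(\Sf^2(r)\times\Sf^2(s))/\Z_2$ with $r^2+s^2=1$ and $\Z_2$ acting antipodally on both factors. It is oriented, embeds in $\RP^5$ via $\Sf^2(r)\times\Sf^2(s)\subset\Sf^5$ with equality in \eqref{1}, and has $\beta_1=\beta_2=0$. Your argument has nothing to work with there.

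\textbf{What is missing.} The paper rests on two structure theorems for the original metric. Brendle--Schoen (Theorem \ref{acta}) has outcomes that all have compact universal cover, so infinite $\pi_1$ forces local reducibility. Micallef--Wang (Theorem \ref{duke}) then handles the reducible case. With these:
\begin{itemize}
\item if isotropic curvature is positive somewhere and $\pi_1$ is finite, Micallef--Moore and Brendle--Schoen give $(i)$;
\item if it is positive somewhere and $\pi_1$ is infinite, $\pi_2(M^4)=0$ forces $\R\times N^3$ with $\Ric_N\geq0$, hence $N\cong\Sf^3$;
\item surface products go through Proposition \ref{propprod}$(iii)$;
\item harmonic forms enter only after Brendle--Schoen has produced a K\"ahler $\CP^2$ (so $\beta_+$ or $\beta_-$ equals $1$), via Theorem \ref{n=4beta}.
\end{itemize}
Note also that a K\"ahler surface has zero complex sectional curvature on the isotropic plane $T^{1,0}$ at every point. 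So the ``positivity elsewhere'' in your $\CP^2$ step never occurs.

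\textbf{The pointwise step is also incorrect.} Vanishing isotropic curvature at $x$ gives equality in \eqref{1} and \eqref{c1}--\eqref{c5}, \eqref{c22}, with $\accentset{\sim}{R}^f_{1342}=\frac{\varepsilon}{3}\big(\accentset{\sim}{K}^f_{\max}-\accentset{\sim}{K}^f_{\min}\big)$. It does not give $\accentset{\sim}{K}^f_{\min}=\accentset{\sim}{K}^f_{\max}$. The totally geodesic Fubini--Study $\CP^2\subset\CP^3$ has equality in \eqref{1}, nowhere positive isotropic curvature, and $\accentset{\sim}{K}^f_{\max}=4\accentset{\sim}{K}^f_{\min}$. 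This is why $(iii)$-$(a)$ carries no such conclusion. In the paper, $\accentset{\sim}{K}^f_{\min}=\accentset{\sim}{K}^f_{\max}$ comes only from global input: Proposition \ref{propprod}$(iii)$, flatness, or Lemma \ref{nnic1}$(ii)$ when $\ker\B^{[2]}_\pm\neq0$ with $\varepsilon_x=\mp1$.

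Moreover, \eqref{c1} says precisely that $\a_f$ is invariant under the frame-dependent complex structure $e_1\mapsto e_2$, $e_3\mapsto\varepsilon e_4$. So no pointwise trichotomy is needed. The real work, in Lemma \ref{nnic1} and Theorem \ref{n=4beta}, is making the parallel $J$ obtained from a self-dual harmonic form compatible with $\a_f$ at every point.

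\textbf{The final steps, and apparently the statement itself, need amending.}
\begin{itemize}
\item A local product need not give $(c)$ or $(d)$: the $\Z_2$-quotient above is neither. The paper's finite-$\pi_1$ step, which concludes $M^4\cong\Sf^2\times\Sf^2$, misses it.
\item In the flat case, $\beta_1\geq2$ requires $\beta_2>0$. Neither your argument nor the paper's infinite-$\pi_1$ branch supplies this; the paper asserts it there without proof. The quotient of a horosphere of $\Hy^5$ by the Bieberbach group of $\Sf^1\times$(Hantzsche--Wendt) is oriented, flat, totally umbilical with equality in \eqref{1}, and has $\beta_1=1$.
\end{itemize}
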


The results presented here extend those of 
previous works \cite{HW,XG,XG1,XZ}, 
particularly sphere theorems, by removing 
additional geometric and topological 
assumptions and by allowing \eqref{1} 
to hold nonstrictly.

By the Gauss equation, a totally geodesic immersion satisfying \eqref{1} induces weakly $1/4$-pinched sectional curvature on $M^n$. That is, for each $x \in M^n$, its sectional curvatures satisfy 
$0\leq {K}(\sigma_1)\leq4 K(\sigma_2)$ for all two-planes $\sigma_1, \sigma_2 \subset T_x M^n$. If $M^n$ is compact, then by Brendle-Schoen \cite{BSacta} it is either diffeomorphic to a spherical space form or locally symmetric. In the latter case, if non-flat, its universal cover is a compact rank-one symmetric space ($\CP^{n/2}$, $\HP^{n/4}$, or $\OP^2$). In particular, totally geodesic submanifolds of such spaces satisfy \eqref{1}.

\iffalse
The results presented here extend those of 
previous works \cite{adc, CdCK, HW, Sim, 
XG, XG1, XZ} without imposing additional 
geometric and topological assumptions on 
either the submanifold or the ambient manifold, 
and by not requiring condition \eqref{1} 
to hold strictly at any point.

It follows from the Gauss equation that if an 
isometric immersion 
$f\colon M^n\to\accentset{\sim}{M}^{n+m}$
is totally geodesic and satisfies condition \eqref{1}, 
then $M^n$ has \emph{weakly $1/4$-pinched 
sectional curvatures}. Recall that a Riemannian manifold 
$M$ is said to have weakly $1/4$-pinched sectional 
curvature if, at each point $x\in M$, its sectional 
curvatures satisfy $0\leq {K}(\sigma_1)\leq4 K(\sigma_2)$ 
for all two-planes $\sigma_1,\sigma_2\subset T_xM$. 
If $M^n$ is compact, then by a result of 
Brendle and Schoen (see Theorem 1 in \cite{BSacta}), 
it is either diffeomorphic to a spherical space form 
or locally symmetric. In the latter case, if $M^n$ is 
non-flat, its universal cover is a compact rank-one symmetric 
space, namely the complex projective space $\CP^{n/2}$, 
the quaternionic projective space $\HP^{n/4}$, or the 
Cayley plane $\OP^2$. In particular, totally geodesic 
submanifolds of rank-one symmetric spaces satisfy 
condition \eqref{1}. 
\fi

Section 6 presents examples of submanifolds satisfying 
\eqref{1} for each case of Theorems \ref{n5} 
and \ref{n=4}, thereby illustrating the sharpness of these results. 
Observe that if an isometric immersion 
$f\colon M^n\to\accentset{\sim}{M}^{n+m}$
satisfies \eqref{1}, then the composition 
$g\circ f\colon M^n\to\accentset{\sim}{N}$ 
also satisfies \eqref{1}, provided that 
$g\colon \accentset{\sim}{M}^{n+m}\to\accentset{\sim}{N}$ 
is totally geodesic. Combined with the examples in 
Section 6, this observation yields a wide class of 
submanifolds satisfying \eqref{1}. In particular, we construct geometrically distinct isometric immersions of manifolds satisfying \eqref{1} that are either diffeomorphic to the sphere $\Sf^n$ or isometric to a Riemannian product $\Sf^1 \times N^{n-1}$, for $n \geq 4$, where $N^{n-1}$ is diffeomorphic to $\Sf^{n-1}$ and has positive sectional curvature.
\iffalse
Howard and Wei, in Theorem 7 of \cite{HW}, investigated 
the topology of submanifolds $f\colon M^n\to\Q_c^{n+m}$ 
satisfying the strict inequality 
$$
S<2kc+\frac{n^2H^2}{n-k}
$$ 
at every point, for an integer $1\leq k\leq n/2$, 
where $\Q_c^{n+m}$ is a space form of curvature 
$c\geq0$. It is worth noticing that, for $k=2$, their 
condition coincides with the strict version of our 
pinching condition \eqref{1} for submanifolds in 
space forms.
\fi

As a consequence of our main results, we derive topological and geometric rigidity theorems for submanifolds of space forms with nonnegative curvature. These results extend the work of Howard and Wei \cite{HW} and Xu and Zhao \cite{XZ} by removing the requirement that the pinching condition hold strictly at every point. The examples presented in Section $6$ further demonstrate the sharpness of these results.

\begin{theorem}\label{ck=2}
Let $f\colon M^n\to\Q_c^{n+m}$, with
$n\geq5$ and $c\geq0$, be an isometric 
immersion of a compact Riemannian 
manifold. Assume that inequality 
\be\label{2}
S\leq4c+\frac{n^2H^2}{n-2} \tag{$\ast\ast$}
\ee
is satisfied at every point. Then, $M^n$ is 
diffeomorphic to $\Sf^n$, or its universal cover 
is isometric to a Riemannian product $\R\times N$, 
where $N$ is diffeomorphic to $\Sf^{n-1}$ and has 
nonnegative isotropic curvature.
\end{theorem}

The case of dimension $n=4$, 
addressed in Theorem 2 of \cite{v4}, also 
follows from Theorem \ref{n=4}, together 
with Theorem 1.1 in \cite{eft} and 
Proposition 12 in \cite{v3}.

\begin{theorem}\label{k=2}
Let $f\colon M^4\to\Q_c^{4+m},c\geq0$, be 
an isometric immersion of a compact, 
oriented Riemannian four-manifold. Suppose 
inequality \eqref{2} is satisfied at every point. 
Then one of the following assertions holds:
\vspace{1ex}

\noindent $(i)$ $M^4$ is diffeomorphic 
to $\Sf^4$. 
\vspace{1ex}

\noindent $(ii)$ The universal cover of $M^4$ is 
isometric to a Riemannian product 
$\R\times N$, where $N$ is diffeomorphic to 
$\Sf^3$ with nonnegative Ricci curvature. 
\vspace{1ex}

\noindent $(iii)$
Equality holds in \eqref{2} at every point, 
and one of the following holds: 
\vspace{0.5ex}
\begin{enumerate}[topsep=1pt,itemsep=1pt,partopsep=1ex,parsep=0.5ex,leftmargin=*, label=(\roman*), align=left, labelsep=-0.5em]
\item [(a)] $M^4$ is isometric to a torus 
$\Sf^2(r)\times \Sf^2(\sqrt{R^2-r^2})$ and 
$f$ is a composition $f=j\circ g$, where 
$g\colon M^4\to\Sf^5(R)$ is the standard 
embedding of the torus 
$\Sf^2(r)\times \Sf^2(\sqrt{R^2-r^2})$ into the 
sphere $\Sf^5(R)$ of radius $R$, and $j\colon\Sf^5(R)\to\Q_c^{4+m}$ 
is an umbilical inclusion.
\item [(b)] $M^4$ is isometric to the complex 
projective plane $\CP_r^2$ of constant holomorphic 
curvature $4/3r^2$ with $r=1/\sqrt{c+H^2}$ and 
$f=j\circ g$, where $g$ is the standard minimal 
embedding of $\CP_r^2$ into $\Sf^7(r)$, and 
$j\colon\Sf^7(r)\to\Q_c^{4+m}$ is an umbilical 
inclusion.
\end{enumerate}
\end{theorem}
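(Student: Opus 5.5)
We derive Theorem \ref{k=2} from Theorem \ref{n=4}. For $\Q_c^{4+m}$ all sectional curvatures equal $c$, so $\accentset{\sim}{K}^f_{\min}=\accentset{\sim}{K}^f_{\max}=c$ and $\frac{16}{3}(c-\frac14 c)=4c$. Thus \eqref{2} is exactly \eqref{1}, and Theorem \ref{n=4} applies. We go through its alternatives one at a time.

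\emph{Case $(i)$ of Theorem \ref{n=4}.} $M^4$ is diffeomorphic to a spherical space form. Since $M^4$ is oriented and even-dimensional, the only spherical space form it can be is $\Sf^4$: a free isometric action on $\Sf^4$ is orientation reversing, because its Lefschetz number $1+\deg\ne 0$ forces $\deg=-1$. This gives $(i)$.

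\emph{Case $(ii)$.} This is the same as $(ii)$ of the statement.

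\emph{Case $(iii)$(b).} Here $-H^2=\accentset{\sim}{K}^f_{\min}=c\geq0$, so $H=0$ and $c=0$. A totally umbilical immersion with $H=0$ is totally geodesic, so $f$ would be a totally geodesic immersion of a compact flat manifold into $\R^{4+m}$. Compact submanifolds of Euclidean space cannot be totally geodesic, so this case cannot occur.

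\emph{Case $(iii)$(c)/(d).} At nonumbilical points there are Dupin principal normals $\delta_1,\delta_2$ of multiplicity $2$ with $\<\delta_1,\delta_2\>=-c$, and $E_{\delta_i}=TN_i$. By the Gauss equation, each factor has curvature $K_{N_i}=c+\|\delta_i\|^2$, and mixed planes have curvature $c+\<\delta_1,\delta_2\>=0$. The plan is the following.
\begin{enumerate}
\item Use the Codazzi equation in the space form to show that the $\delta_i$ are parallel in the normal connection along the leaves, and that each $\delta_i$ is constant along the opposite factor. This is the standard argument for Dupin principal normals of multiplicity at least $2$.
\item Conclude that $\|\delta_i\|$ is constant. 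Then $K_{N_i}$ is constant and positive, because $N_i$ is diffeomorphic to $\Sf^2$. In case (d), $\min K_{N_1}<0$ or $N_1=\R^2$ flat. Flat $N_1$ would force $\delta_1=0$ and hence $c=0$; one then shows $f$ restricted to the $\R^2$-directions has relative nullity, which contradicts compactness, using Otsuki's lemma or the maximum of $|f|^2$. So (d) is excluded.
\item Apply Moore's lemma (the decomposition theorem for products whose second fundamental form is adapted). It gives $f$ as an extrinsic product of two round spheres. Since $\<\delta_1,\delta_2\>=-c$, these spheres sit in a sphere $\Sf^5(R)$ umbilically included in $\Q_c^{4+m}$, with $1/R^2=c+H^2$. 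This yields (a).
\end{enumerate}
Umbilical points are handled by density or analyticity: the umbilical set has empty interior unless $f$ is umbilical everywhere, and the umbilical case was excluded as in (iii)(b).

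\emph{Case $(iii)$(a).} $M^4$ is $\CP^2$ with $f$ $(2,0)$-geodesic and equality in \eqref{2}. Here we invoke Theorem 1.1 of \cite{eft}: a $(2,0)$-geodesic immersion into a space form of a Kähler manifold is either a Kähler submanifold of flat space (impossible for compact $M$) or the composition of a pluriminimal immersion with an umbilical inclusion of a sphere. We then use Proposition 12 of \cite{v3}, which says that equality in the pinching together with minimality in $\Sf^{N}(r)$ with $r=1/\sqrt{c+H^2}$ forces $g$ to be the standard Veronese-type embedding of $\CP^2_r$ into $\Sf^7(r)$. This gives (b).

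\textbf{Main obstacle.} I expect the hardest step to be the product case: showing constancy of $\|\delta_i\|$ via Codazzi, and ruling out (d).
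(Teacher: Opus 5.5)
Your route is the one the paper indicates. The paper gives no separate proof and only says the result follows from Theorem \ref{n=4} together with Theorem 1.1 of \cite{eft} and Proposition 12 of \cite{v3}. Your dismissal of cases $(i)$ and $(iii)$(b) of Theorem \ref{n=4} is correct. So is your Codazzi-plus-Moore treatment of the product case, which the paper leaves implicit. Several statements, however, need correcting.

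\textbf{The cited result in the $\CP^2$ case.} Your paraphrase of \cite{eft} cannot be right as written. Being $(2,0)$-geodesic means $\alpha(JX,JY)=\alpha(X,Y)$, whereas pluriminimal means $\alpha(JX,JY)=-\alpha(X,Y)$. Composing with an umbilical $j$ only adds a multiple of the metric, so $g$ inherits $(2,0)$-geodesicity from $f$. If $g$ were also pluriminimal it would be totally geodesic, and the Veronese example of part (b) would be ruled out. The same objection applies to your ``Kähler submanifold of flat space'' alternative.

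Moreover, $(2,0)$-geodesicity alone does not give a factorization through a minimal immersion into a sphere. Take $\Sf^2(r_1)\times\Sf^2(r_2)\subset\R^6$ with the product complex structure. It is $(2,0)$-geodesic, but for $r_1\neq r_2$ it is not minimal in any sphere, and it is not a complex submanifold. What your argument actually uses is that, for $M^4=\CP^2$, the immersion factors as $f=j\circ g$ with $g$ minimal in $\Sf^N(r)$, $r=1/\sqrt{c+H^2}$. You should state the cited result in that form and check that its hypotheses (for instance irreducibility of $M^4$) are satisfied.

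\textbf{The radius relation in case (a).} The relation $1/R^2=c+H^2$ is false unless $r=R/\sqrt2$. Write $h^2=1/R^2-c$ for the umbilical inclusion $j$. Then $H^2=H_g^2+h^2$, and $H_g\neq0$ when the two radii differ. Equality in \eqref{2} holds for all radii, and the statement imposes no such relation, so simply drop it.

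\textbf{Umbilical points.} Your density/analyticity remark is unsupported, but it is also unnecessary. Applying Proposition \ref{propprod}(iii) directly to the product gives $\delta_i=\mathcal H_{f_i}$ with $\alpha_f$ adapted at \emph{every} point. Your Codazzi computation then makes $\delta_1,\delta_2$ parallel. In case (c), at an umbilical point one would have $\delta_1=\delta_2$. This gives $K_{N_1}=c+\|\delta_1\|^2=c+\langle\delta_1,\delta_2\rangle=0$, contradicting $K_{N_1}>0$, so there are no umbilical points.

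\textbf{Case (d).} The alternative $\min K_{N_1}<0$ is excluded at once by $K_{N_1}=c+\|\delta_1\|^2\geq0$. Your justification ``positive because $N_i\cong\Sf^2$'' does not apply to $N_1$ there.
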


The following corollary, originally established in 
\cite{HW} under the strict inequality assumption, 
also strengthens Theorems 15 and 16 of Gu 
and Xu \cite{XG} (see also \cite{AB}).

\begin{corollary}\label{cor}
Let $f\colon M^n\to\Q_c^{n+m}$, with
$n\geq4$ and $c\geq0$, be an isometric 
immersion of a compact, oriented Riemannian 
manifold. If the inequality 
$$
S\leq2c+\frac{n^2H^2}{n-1} 
$$
holds at every point, then $M^n$ is 
diffeomorphic to $\Sf^n$.
\end{corollary}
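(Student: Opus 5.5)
The corollary will be derived from Theorems \ref{ck=2} and \ref{k=2}. The first observation is that the hypothesis $S\le 2c+n^2H^2/(n-1)$ implies inequality \eqref{2}. Indeed, since $c\ge0$ we have $2c\le 4c$, and since $n^2H^2/(n-1)\le n^2H^2/(n-2)$ we get $S\le 4c+n^2H^2/(n-2)$. Moreover, the implication is strict whenever $c>0$ or $H\neq0$. If $c=0$ and $H=0$ at some point, the hypothesis gives $S=0$ there, so the point is totally geodesic. In the space form $\Q_c^{n+m}$ we have $\accentset{\sim}{K}^f_{\min}=\accentset{\sim}{K}^f_{\max}=c$, so \eqref{2} is exactly \eqref{1} for this ambient space.

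For $n\ge5$, Theorem \ref{ck=2} gives one of two outcomes: either $M^n$ is diffeomorphic to $\Sf^n$, or its universal cover is isometric to $\R\times N$. The task is to exclude the product case. I would use the stronger hypothesis as follows. Choose an orthonormal frame with $e_1$ tangent to the $\R$-factor. Since $\R\times N$ splits, $K(e_1,e_j)=0$ for all $j$, and hence $\Ric(e_1)=0$. By the Gauss equation,
\[
\Ric(e_1)=(n-1)c+n\<\mathcal H_f,\alpha_f(e_1,e_1)\>-\sum_j\|\alpha_f(e_1,e_j)\|^2 .
\]
The standard Ricci estimate under the pinching $S\le 2c+n^2H^2/(n-1)$ (as in \cite{HW}, or via the Okumura-type lemma) yields $\Ric>0$ wherever the inequality is strict. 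In the equality cases it gives $\Ric\ge0$, with $\Ric(e_1)=0$ forcing $c=0$ and a very rigid second fundamental form. In that degenerate situation the point is totally geodesic with $c=0$, or $f$ is umbilical in a direction, and the analysis should force $S=0$ along the line and contradict compactness. Concretely, a compact $M$ immersed in $\R^{n+m}$ has a point with all sectional curvatures positive, contradicting $K(e_1,e_j)=0$ everywhere. If $c>0$, then $\Ric\ge (n-1)c-\text{(controlled term)}>0$ directly, again contradicting $\Ric(e_1)=0$. Either way the product alternative is excluded.

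For $n=4$, I apply Theorem \ref{k=2}, where orientability is used. The alternative $(ii)$ is excluded exactly as above. For alternative $(iii)$, equality must hold in \eqref{2} everywhere. Since the hypothesis gives $S\le 2c+16H^2/3 \le 4c+8H^2$, equality forces $2c+(8-16/3)H^2=0$, that is $c=0$ and $H=0$, and hence $S=0$. Then $f$ is totally geodesic into Euclidean space, which is impossible for a compact $M^4$. This contradicts both cases $(a)$ and $(b)$ of Theorem \ref{k=2}, since those are non-totally-geodesic spheres or $\CP^2$ with $H^2+c>0$. Thus $M^4$ is diffeomorphic to $\Sf^4$.

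The main obstacle is the Ricci estimate in the product case. One must show that the pinching $S\le 2c+n^2H^2/(n-1)$ yields $\Ric\ge0$ with equality only in degenerate configurations. I would take this from the well-known Ricci estimate $\Ric\ge \frac{n-1}{n}\big(nc+2nH^2-S-\frac{n-2}{\sqrt{n(n-1)}}\sqrt{n}H\sqrt{S-nH^2}\big)$ of Leung. I would then check that under the hypothesis this quantity is nonnegative, and vanishes only if $c=0$ and $S=nH^2=0$ or in the umbilic configuration. Those cases are then ruled out by compactness in Euclidean space.
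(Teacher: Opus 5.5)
Your reduction to Theorems \ref{ck=2} and \ref{k=2} matches the paper, and so does your disposal of part $(iii)$ of Theorem \ref{k=2}: equality in \eqref{2} together with the hypothesis forces $c=0$ and $H=0$, hence $S=0$. The paper excludes the $\R\times N$ alternative differently. It notes that the hypothesis implies the $k=1$ pinching \eqref{v3} of \cite{v3}, whose right-hand side is exactly the threshold at which Leung's lower bound for $\Ric$ vanishes. Theorem 2 of \cite{v3}, after its equality cases are excluded, then makes the universal cover homeomorphic to $\Sf^n$, hence compact.

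A Ricci argument could replace that citation, but as written yours breaks down in the one delicate case, $c=0$. The ``concrete'' fact you invoke is false: a compact $M^n$ immersed in $\R^{n+m}$ need not have a point where all sectional curvatures are positive. A counterexample is $\Sf^1(r_1)\times\Sf^{n-1}(r_2)\subset\R^{n+2}$. It is compact, has $K(e_1,\cdot)\equiv0$ for $e_1$ tangent to the circle, and its universal cover is exactly $\R\times\Sf^{n-1}$. So compactness alone cannot exclude the product; the pinching must enter through the equality case.

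Nor does the equality case force $S=0$ along the line. On the cylinder $\R\times\Sf^{n-1}(r)\subset\R^{n+1}$ the hypothesis holds with equality, $\Ric(\partial_t)=0$, and $S\neq0$. Your Leung estimate is also misstated: with $H=\|\mathcal H_f\|$ the last term is $\frac{n(n-2)}{\sqrt{n(n-1)}}H\sqrt{S-nH^2}$. With your $\sqrt n\,H$ in its place, the bound would give $\Ric(\partial_t)>0$ on that cylinder, which is false.

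What equality actually forces is $\alpha_f(e_1,\cdot)=0$, and this follows directly. For a unit vector $e_1$, put $a=\alpha_{11}$ and $b=\sum_{j\ge2}\alpha_{jj}$. Then $n\mathcal H_f=a+b$ and $S\ge\|a\|^2+2\sum_{j\ge2}\|\alpha_{1j}\|^2+\|b\|^2/(n-1)$. Insert this and the hypothesis $S\le 2c+\|a+b\|^2/(n-1)$ into the Gauss formula $\Ric(e_1)=(n-1)c+\<a,b\>-\sum_{j\ge2}\|\alpha_{1j}\|^2$. The terms in $c$ cancel, and you get
\[
\Ric(e_1)\geq\frac{n-2}{2}\|\alpha_{11}\|^2+(n-2)\sum_{j\geq2}\|\alpha_{1j}\|^2 .
\]
Apply this to $\hat f=f\circ\pi$ on $\R\times N$ with $e_1=\partial_t$. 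Since $\Ric(\partial_t)=0$, the vector $\partial_t$ lies in the relative nullity of $\hat f$. The Gauss equation then gives $0=K(\partial_t,X)=c$ for unit $X\perp\partial_t$, so $c=0$. Moreover, $\partial_t$ is parallel and $\alpha_{\hat f}(\partial_t,\partial_t)=0$, so each curve $t\mapsto\hat f(t,y)$ is a unit-speed straight line in $\R^{n+m}$. This contradicts the boundedness of $\hat f(\hat M)=f(M)$. The argument handles $c>0$ and $c=0$ uniformly for all $n\ge4$, and with it the appeal to \cite{v3} is unnecessary.
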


For submanifolds of Riemannian manifolds with 
bounded nonnegative curvature, we prove the following.

\begin{corollary}\label{cor1}
Let $f\colon M^n\to\accentset{\sim}{M}^{n+m}$, 
$n\geq 5$, be an isometric immersion of a 
compact, oriented Riemannian manifold into 
a complete, simply connected Riemannian 
manifold $\accentset{\sim}{M}^{n+m}$ with 
nonnegative bounded sectional curvature 
$\accentset{\sim}{K}$. Assume that 
\be\label{2?}
S\leq\frac{16}{3}\big(\inf\accentset{\sim}{K}
-\frac{1}{4}\sup\accentset{\sim}{K}\big)
+\frac{n^2H^2}{n-2} \tag{$\ast\ast\ast$}
\ee
holds at every point. Then one of the 
following occurs: 
\vspace{1ex}

\noindent $(i)$ $M^n$ is diffeomorphic to a 
spherical space form. 
\vspace{1ex}

\noindent $(ii)$ The universal cover of $M^n$ is 
isometric to a Riemannian product $\R\times N$, 
where $N$ is diffeomorphic to $\Sf^{n-1}$ with 
nonnegative isotropic curvature. 
\vspace{1ex}

\noindent $(iii)$
Equality holds in \eqref{2?} at every point, 
and the submanifold is as in part $(iii$-$a)$ 
of Theorem \ref{n5}, where the ambient manifold 
is either diffeomorphic to but not isometric to a sphere, or isometric to 
a projective space over $\C,\Hy$, or the Cayley 
numbers.
\end{corollary}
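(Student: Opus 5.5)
Corollary \ref{cor1} follows from Theorem \ref{n5}: we check its hypotheses and then rule out the alternatives in part $(iii)$ other than $(iii$-$a)$.

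\textbf{Step 1: reduction to Theorem \ref{n5}.} At every $x$ we have $\inf\accentset{\sim}{K}\leq\accentset{\sim}{K}^f_{\min}(x)$ and $\sup\accentset{\sim}{K}\geq\accentset{\sim}{K}^f_{\max}(x)$. Hence
$$
\inf\accentset{\sim}{K}-\tfrac14\sup\accentset{\sim}{K}\leq\accentset{\sim}{K}^f_{\min}-\tfrac14\accentset{\sim}{K}^f_{\max},
$$
so \eqref{2?} implies \eqref{1}. Also $\accentset{\sim}{K}^f_{\min}\geq\inf\accentset{\sim}{K}\geq0$. Thus Theorem \ref{n5} applies, and alternatives $(i)$ and $(ii)$ pass over directly.

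\textbf{Step 2: equality cases.} Suppose alternative $(iii)$ of Theorem \ref{n5} holds. Equality in \eqref{1} at every point, together with \eqref{2?}, forces equality in the chain above. In particular
$$
\accentset{\sim}{K}^f_{\min}\equiv\inf\accentset{\sim}{K}\quad\text{and}\quad\accentset{\sim}{K}^f_{\max}\equiv\sup\accentset{\sim}{K},
$$
because $\tfrac{16}{3}$ and $\tfrac43$ are both positive, so each term must be extremal. Hence equality holds in \eqref{2?}.

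\textbf{Step 3: excluding $(iii$-$b)$ and $(iii$-$c)$.} In both cases $\accentset{\sim}{K}^f_{\min}=\accentset{\sim}{K}^f_{\max}=0$. Step 2 then gives $\inf\accentset{\sim}{K}=\sup\accentset{\sim}{K}=0$, so $\accentset{\sim}{M}$ is flat, complete and simply connected, i.e. Euclidean space. This is impossible:
\begin{enumerate}[label=(\roman*)]
\item In case $(b)$, $f$ would be a totally geodesic compact submanifold of $\R^{n+m}$.
\item In case $(c)$, every compact submanifold of Euclidean space has a point where all sectional curvatures are positive, namely a point of maximal distance from the origin. This contradicts $M^n$ being the quotient $(\R^2\times\Sf^{n-2}(r))/\Gamma$, whose sectional curvature vanishes on planes containing an $\R^2$-direction.
\end{enumerate}
So only $(iii$-$a)$ survives.

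\textbf{Step 4: the ambient manifold in case $(iii$-$a)$.} Here $f$ is totally geodesic and $M^n$ is a compact rank-one symmetric space or $\CP^{2k-1}/\Z_2$. Step 2 shows the ambient curvature is not constant, so $\sup\accentset{\sim}{K}>0$. Since $\accentset{\sim}{M}$ is not flat, the ambient is not Euclidean. If $\inf\accentset{\sim}{K}=\tfrac14\sup\accentset{\sim}{K}>0$, then $\accentset{\sim}{M}$ is complete, simply connected and weakly $1/4$-pinched. By Brendle--Schoen \cite{BSacta} it is either diffeomorphic to a sphere or isometric to a compact rank-one symmetric space. The sphere case carries no isometry, since constant curvature is excluded. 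In the symmetric case we get a projective space over $\C$, $\Hy$, or the Cayley numbers (up to scaling).

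\textbf{Main obstacle.} I expect the hardest step to be establishing the global pinching relation $\inf\accentset{\sim}{K}=\tfrac14\sup\accentset{\sim}{K}$ from the pointwise data. The approach I would try first: on a totally geodesic $M^n$ the sectional curvatures equal the ambient ones on $f_*TM$. A non-flat rank-one symmetric space has curvature ratio exactly $4$ at each point. So at each $x$,
$$
\accentset{\sim}{K}^f_{\max}(x)=4\,\accentset{\sim}{K}^f_{\min}(x),
$$
and combining this with Step 2 gives $\sup\accentset{\sim}{K}=4\inf\accentset{\sim}{K}$ globally. This is what lets Brendle--Schoen be applied to the ambient manifold itself.
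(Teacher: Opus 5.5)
Your plan follows the paper's proof. You deduce \eqref{1} and $\accentset{\sim}{K}^f_{\min}\geq 0$ from \eqref{2?}, apply Theorem~\ref{n5}, and read off $\accentset{\sim}{K}^f_{\min}\equiv\inf\accentset{\sim}{K}$ and $\accentset{\sim}{K}^f_{\max}\equiv\sup\accentset{\sim}{K}$ from the equality case. In case $(iii$-$a)$ you then apply Brendle--Schoen to the ambient space. Steps~1 and~2 are correct.

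\textbf{The gap is in Step~3(ii).} You claim that every compact submanifold of Euclidean space has a point where \emph{all} sectional curvatures are positive. This is false in codimension $\geq 2$: a flat torus $\Sf^1\times\cdots\times\Sf^1\subset\R^{2n}$ (e.g.\ the Clifford torus in $\R^4$) is compact with identically vanishing curvature. Take a point $x_0$ of maximal distance $r$ from the origin and set $\xi=-f(x_0)/r$. The farthest-point argument gives only $\langle\alpha_f(X,X),\xi\rangle\geq\|X\|^2/r$ for all $X\in T_{x_0}M$. The other normal directions can cancel this contribution in the Gauss equation.

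What this inequality does give is $\alpha_f(X,X)\neq0$ for every $X\neq0$, so the relative nullity of $f$ vanishes at $x_0$. That is the correct way to exclude case $(iii$-$c)$, where the index of relative nullity is $2$ everywhere, and it also disposes of $(iii$-$b)$. This is Corollary~1.6 of \cite{DT}, exactly the argument the paper uses in the proof of Theorem~\ref{ck=2}. The written proof of this corollary in the paper treats only $(iii$-$a)$ explicitly, so making the exclusion explicit is worthwhile, but it must use the nullity argument rather than the curvature one.

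\textbf{Your ``main obstacle'' is immediate.} In case $(iii$-$a)$ the immersion is totally geodesic, so $S=H=0$. The equality in \eqref{2?} you obtained in Step~2 then reads $0=\frac{16}{3}\bigl(\inf\accentset{\sim}{K}-\frac14\sup\accentset{\sim}{K}\bigr)$. This is the paper's route. Your detour through the pointwise curvature ratio $4$ of the rank-one symmetric spaces is correct but unnecessary. Positivity follows because $M^n$ has positive curvature and $f$ is totally geodesic, so $\inf\accentset{\sim}{K}=\accentset{\sim}{K}^f_{\min}>0$.

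\textbf{Missing steps before and after Brendle--Schoen.} The theorem in \cite{BSacta} is stated for compact manifolds. You must first use Bonnet--Myers (completeness together with $\accentset{\sim}{K}\geq\inf\accentset{\sim}{K}>0$) to get compactness of $\accentset{\sim}{M}^{n+m}$. You then need simple connectivity and positive curvature (Berger \cite{Berg}, Klingenberg \cite{Kling}) to pass from ``locally symmetric'' to a rank-one symmetric space. Excluding the round sphere uses $\sup\accentset{\sim}{K}=4\inf\accentset{\sim}{K}>\inf\accentset{\sim}{K}$.
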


The paper is organized as follows. In Section 2, we recall the notion of complex sectional curvature and summarize key results on compact Riemannian manifolds with nonnegative isotropic curvature, which will be used in our proofs. We also present auxiliary results for submanifolds satisfying \eqref{1}, including detailed information on the structure of the second fundamental form at points where \eqref{1} holds as an equality. In addition, we review some known facts about the Bochner-Weitzenböck technique and establish auxiliary results for immersions of Riemannian products and even-dimensional manifolds satisfying \eqref{1}.

Section 3 is devoted to the proof of Theorem~\ref{n5}, which is divided into two cases depending on whether the fundamental group of the submanifold is finite or infinite. Section 4 addresses four-dimensional submanifolds: after recalling relevant facts from four-dimensional geometry, we present the proof of Theorem~\ref{n=4}, again splitting the argument into two cases.

In Section~5, we prove our results for submanifolds in space forms with nonnegative curvature, including the proof of Corollary~\ref{cor1}. Section~6 presents several classes of submanifolds satisfying \eqref{1}, illustrating the optimality of our results. Finally, in the Appendix, we prove an auxiliary result concerning the complex sectional curvature of compact irreducible symmetric spaces that we were unable to locate in the literature.

\section{Preliminaries and auxiliary results}
\subsection{Isotropic curvature}
In this section, we recall the notions of positive and nonnegative isotropic curvature introduced by Micallef and Moore in \cite{MM}.

Let $(M^n,g)$ be a Riemannian manifold of dimension 
$n\geq4$ with Levi-Civit\'a connection 
$\n$ and curvature tensor $R$ given by
$$
R(X,Y)=[\n_X,\n_Y]-\n_{[X,Y]},\quad X,Y\in\mathcal X(M).
$$

Recall that the \emph{curvature operator} at a point 
$x\in M^n$ is the self adjoint endomorphism 
$$
\mathcal R\colon\Lambda^2T_xM\to\Lambda^2T_xM
$$
on the second exterior power $\Lambda^2T_xM$ of 
the tangent space, defined by
$$
\<\<\mathcal R(X\wedge Y),Z\wedge W\>\>
=\<(R(X,Y)W,Z\>,\quad X,Y,Z,W\in T_xM, 
$$
where the inner products $\<\<\cdot,\cdot\>\>$ and 
$\<\cdot,\cdot\>$ are induced by the metric $g$. 

The inner product $\<\cdot,\cdot\>$ on $T_xM$ can 
be extended to the complexification $T_xM\otimes\C$ 
either as a complex bilinear form $(\cdot,\cdot)$ or to a 
Hermitian inner product $((\cdot,\cdot))$ defined by 
$$
((v,w))=(v,\overline w),\quad v,w\in T_xM\otimes\C.
$$
The curvature operator is also extended to a complex 
linear map
$$
\mathcal R\colon\Lambda^2T_xM\otimes\C
\to\Lambda^2T_xM\otimes\C.
$$
To each complex two-plane $\sigma\subset T_xM\otimes\C$ 
we associate the \emph{complex sectional curvature} 
$K^c(\sigma)$ defined by 
$$
K^c(\sigma)=
\frac{((\mathcal R(v\wedge w),v\wedge w))}{\|v\wedge w\|^2}, 
$$
where $\{v,w\}$ is a basis of $\sigma$. A two-plane 
$\sigma\subset T_xM\otimes\C$ is called 
\emph{isotropic} if $(v,v)=0$ for all $v\in\sigma$. 

We say that $(M^n,g)$ has \emph{positive isotropic curvature} 
(respectively, \emph{nonnegative isotropic curvature})
at a point $x\in M$ if $K^c(\sigma)>0$ (respectively, 
$K^c(\sigma)\geq0$) for every isotropic two-plane at
$x$. 

To each orthonormal $4$-frame 
$F=\{e_1,e_2,e_3,e_4\}\subset T_xM$ at a point $x\in M^n$, 
we associate the isotropic two-plane 
$$
\sigma_F=\spa_\C\left\{e_1+\mathsf ie_2,e_3+\mathsf ie_4\right\},
$$
where $\mathsf i=\sqrt{-1}$. It follows (see \cite{MM}) that 
$$
K^c(\sigma_F)=R_{1331} + R_{1441}+ R_{2332} + R_{2442}
-2 R_{1234}.
$$
Here $R_{ijk\ell}$ denote the 
components of the curvature tensor $R$, defined by 
$$
R_{ijk\ell}=g\big(R(e_i,e_j)e_k, e_\ell\big), \quad 1\leq i,j,k,\ell\leq4. 
$$ 
Hence, $(M^n,g)$ has positive 
isotropic curvature (respectively, nonnegative isotropic 
curvature) at a point $x\in M^n$ if
$$
R_{1331} + R_{1441}+ R_{2332} + R_{2442}
-2 R_{1234}>0, 
$$
(respectively, 
$$
R_{1331} + R_{1441}+ R_{2332} + R_{2442}
-2 R_{1234} \geq0), 
$$
for all orthonormal $4$-frames 
$\{e_i\}_{i=1}^4\subset T_xM$. 
The manifold $(M^n,g)$ is said to have nonnegative (or positive) 
isotropic curvature if it satisfies the corresponding condition 
at every point and for all orthonormal $4$-frames. We follow 
this notation throughout the paper.

We recall for later use some important results on the geometry of manifolds with nonnegative isotropic curvature. A Riemannian manifold is said to be \emph{locally irreducible} if it is not locally a product of lower-dimensional manifolds.

\begin{theorem}\cite{MW}\label{duke}
Let $(M^n,g)$, $n\geq4$, be a compact locally reducible 
Riemannian manifold with nonnegative isotropic curvature. 
Then its Riemannian universal cover $(\hat M^n,\hat g)$ 
is isometric to one of the following following Riemannian 
products:
\vspace{1ex}

\noindent $(a)$ 
$(\R^{n_0},g_0)\times(N_1^{n_1},g_1)\times\cdots\times(N_r^{n_r},g_r)$, 
where $n_0\geq0, g_0$ is the flat Euclidean metric, and 
for each $1\leq i\leq r$, either $n_i=2$ and $N_i=\Sf^2$ 
has nonnegative Gaussian curvature, or else $n_i\geq3$ 
and $(N_i,g_i)$ is compact, irreducible with 
nonnegative flag curvature. 
\vspace{1ex}

\noindent $(b)$ $(\varSigma^2,g_\varSigma)\times(N^{n-2},g_N)$, 
where $\varSigma^2$ is a surface with Gaussian curvature 
$K_{\varSigma}$, and $(N,g_N)$ is a compact irreducible 
Riemannian manifold with sectional curvature $K_N$ satisfying 
$\min K_N\geq-\min K_{\varSigma}>0$. 
\end{theorem}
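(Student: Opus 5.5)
The plan is to start from the de Rham decomposition of the universal cover $(\hat M^n,\hat g)$. Since $M^n$ is compact, $\hat M^n$ is complete, and because $M^n$ is locally reducible, $\hat M^n$ is isometric to a product
$$(\R^{n_0},g_0)\times(N_1^{n_1},g_1)\times\cdots\times(N_r^{n_r},g_r).$$
Here each $N_i$ is irreducible and not flat, and at least two factors occur (counting $\R^{n_0}$ as one factor when $n_0>0$). Nonnegative isotropic curvature passes to $\hat g$. The first step is to restrict it to product frames. Take $e_1,e_3$ tangent to one factor and $e_2,e_4$ tangent to another. Then all mixed curvature terms vanish, and $K^c(\sigma_F)\geq0$ reduces to
$$K_1(e_1,e_3)+K_2(e_2,e_4)\geq0 .$$
More generally, choosing frames with three vectors in one factor and one in another shows that each factor of dimension at least $3$ has nonnegative isotropic-type sums of sectional curvatures mixed with the flat direction.

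The key case split is on the smallest sectional curvature of the factors. Suppose some factor has a plane of negative curvature $-a<0$. Pairing that plane against planes in every other factor forces every other factor to have sectional curvature at least $a>0$. Two consequences follow. First, there is at most one factor with somewhere-negative curvature. Second, that factor must be $2$-dimensional. The reason is that a factor of dimension $\geq3$ containing a negative plane would, using a frame with $e_1,e_2,e_3$ in that factor and $e_4$ in another, violate nonnegative isotropic curvature after a suitable rotation. I expect this dimension claim to be the main obstacle: showing that a factor of dimension $\geq3$, or the flat factor with $n_0\geq1$, cannot coexist with a negatively curved factor. The approach I would try first is to use Micallef--Wang style frames mixing two directions from the bad factor with two from another. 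Once this is settled, we are in case (b). There $\varSigma^2$ carries the negative curvature, $N$ has $\min K_N\geq-\min K_\varSigma>0$, $N$ is compact by Bonnet--Myers, and $N$ is irreducible. There can be only one such positive factor besides $\varSigma$, since otherwise two positively curved factors plus $\varSigma$ still fit, so this uniqueness must also be checked using compactness of $M$ and the splitting theorem.

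In the complementary case every factor has nonnegative sectional curvature in the mixed sense. Then each $2$-dimensional factor has $K\geq0$. It is compact or flat: a complete surface with $K\geq0$ is either flat and absorbed into $\R^{n_0}$, or diffeomorphic to $\Sf^2$ or $\R^2$. The $\R^2$ case is excluded by the Cheeger--Gromoll splitting applied to the cocompact action on $\hat M$, so the non-flat surface factors are $\Sf^2$. For factors of dimension $\geq3$, we combine the isotropic condition within the factor with the mixed frames, which yields nonnegative flag curvature, i.e.\ $K(v,w)\geq0$ for $v$ fixed. Compactness of these factors follows from Cheeger--Gromoll: $\hat M$ covers a compact manifold, so the factors with no lines are compact and all lines are collected in $\R^{n_0}$. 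This gives case (a).
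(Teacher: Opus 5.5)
The paper does not prove this theorem; it quotes it from Micallef--Wang \cite{MW}. Your argument therefore has to stand on its own, and it does not.

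\textbf{The dimension claim is false.} You claim that a de Rham factor containing a negatively curved plane must be $2$-dimensional. You flagged this as the main obstacle, and it is in fact false. Frames with $e_1,e_2,e_3$ in that factor $P$ and $e_4$ in another give only $K_P(e_1,e_3)+K_P(e_2,e_3)\geq0$. Frames mixing directions of $P$ with those of the complementary factor $N$ bring in curvatures of $N$, which can be taken as large as you like. What survives is only that $P\times\R$ has nonnegative isotropic curvature, which for $\dim P=3$ is $\mathrm{Ric}_P\geq0$.

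Here is an explicit counterexample. Let $P^3$ be a Berger sphere whose curvature operator has eigenvalue $\nu=4-3t^2$ on $X_2\wedge X_3$ and $\mu=t^2$ on $X_1\wedge X_2$ and $X_1\wedge X_3$, with $4/3<t^2<2$. Then $\nu<0<\mu+\nu$. Let $\Sf^2$ have curvature $\kappa\geq-\nu$. Write the $TP$-component of an isotropic plane as $z_1\wedge w_1=aX_{23}+bX_{31}+cX_{12}$. The relations $(z,z)=(w,w)=(z,w)=0$ say the two Gram matrices are negatives of each other. This forces the $T\Sf^2$-component to be $d\,Y_1\wedge Y_2$ with $d^2=a^2+b^2+c^2$. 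Setting $t_0=|a|^2$ and $s=|b|^2+|c|^2$, the isotropic curvature is $\nu|a|^2+\mu(|b|^2+|c|^2)+\kappa|a^2+b^2+c^2|\geq \nu t_0+\mu s+\kappa\max(0,t_0-s)\geq0$. So $P^3\times\Sf^2$ is compact, simply connected and locally reducible, has nonnegative isotropic curvature, and its $3$-dimensional factor contains a negative plane.

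\textbf{Consequences for your case split.} Splitting on ``some factor has a negative plane'' cannot lead to (b). A factor of dimension $\geq3$ with negative planes must land in alternative (a). There, ``nonnegative flag curvature'' cannot mean nonnegative sectional curvature as you read it, since under that reading the example above contradicts the statement itself. The condition one can actually extract is the Ricci-type one: restricting to planes whose component in the other factors is one-dimensional shows $N_i\times\R$ has nonnegative isotropic curvature. For $n_i=3$ this is $\mathrm{Ric}\geq0$, which is exactly how the paper later applies the theorem in the proof of Theorem~\ref{n=4p1inf}.

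\textbf{A workable route.} Split instead on whether some $2$-dimensional factor $\varSigma$ has negative curvature.
\begin{itemize}
\item If it does, pair its negative plane with mixed planes, which have curvature $0$. This immediately rules out a Euclidean factor and a second complementary factor, so there is no need for your appeal to compactness and splitting. The complement is a single irreducible $N$ with $K_N\geq-\inf K_\varSigma>0$, compact by Bonnet--Myers, which gives (b).
\item If it does not, the three-plus-one frames give $(n_i-2)\mathrm{Ric}(e_3)\geq0$ on every factor of dimension $n_i\geq3$, so $\mathrm{Ric}_{\hat M}\geq0$. Cheeger--Gromoll for the cocompact deck action, together with uniqueness of the de Rham decomposition, then makes every non-flat factor compact. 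This gives (a), including $\Sf^2$ for the surface factors.
\end{itemize}
Your treatment of the case where every factor has $K\geq0$ is essentially this second branch and is fine.
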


\begin{theorem}\cite{BSacta}\label{acta}
Let $(M^n,g)$, $n\geq4$, be a compact locally irreducible 
Riemannian manifold with nonnegative isotropic curvature. 
Then one of the following holds:
\vspace{0.1ex}

\noindent $(i)$ The manifold $M^n$ is diffeomorphic to 
a spherical space form.
\vspace{1ex}

\noindent $(ii)$ The universal cover of 
$M^n$ is a K\"ahler manifold biholomorphic to $\CP^{n/2}$.
\vspace{1ex}

\noindent $(iii)$ The universal cover of $M^n$ is isometric 
to a compact symmetric space.
\end{theorem}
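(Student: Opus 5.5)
This is the Brendle--Schoen classification; it is not proved in the paper but quoted. The plan I would follow is the Ricci flow approach, combined with a strong maximum principle for the isotropic curvature condition and Berger's holonomy classification.

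\emph{Step 1: the flow preserves the condition.} Let $g(t)$, $t\in[0,T)$, be the solution of the Ricci flow with $g(0)=g$. By the Brendle--Schoen and Nguyen invariance result, nonnegative isotropic curvature is preserved. Concretely, the cone of algebraic curvature operators with $K^c(\sigma)\geq0$ for all isotropic two-planes $\sigma$ is invariant under the ODE $\frac{d}{dt}\mathcal R=\mathcal R^2+\mathcal R^\#$. Hamilton's maximum principle for systems then gives nonnegative isotropic curvature for $g(t)$, $t>0$.

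\emph{Step 2: strong maximum principle.} For $t>0$, the set of orthonormal four-frames $F$ with $K^c(\sigma_F)=0$ is invariant under parallel transport. Hence either $g(t)$ has positive isotropic curvature for all small $t>0$, or this zero set is a nonempty holonomy-invariant set at every point. In the second case I would argue that the holonomy group $\mathrm{Hol}(g(t))$ cannot be $SO(n)$. The reason is that an $SO(n)$-invariant zero set contains all orthonormal four-frames, which forces $\mathcal R$ to vanish on all of them and contradicts irreducibility unless $g(t)$ is flat.

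\emph{Step 3: Berger's classification.} Local irreducibility persists for small $t$, and $g(t)$ cannot be flat for $t>0$, so Berger's theorem applies to the universal cover. If the holonomy is neither $SO(n)$ nor $U(n/2)$, then for the remaining entries of Berger's list, nonnegative isotropic curvature forces either the locally symmetric case or a contradiction with NIC. The contradiction in the quaternionic-K\"ahler and exceptional-holonomy cases comes via the vanishing of the relevant curvature components; the Ricci-flat holonomies are excluded because NIC and irreducibility would then force flatness. In the locally symmetric case $g(t)$ is locally symmetric. Being locally symmetric is backward-unique under Ricci flow, so letting $t\to0$ yields $(iii)$. If the holonomy is $U(n/2)$, then $g(t)$ is K\"ahler with nonnegative isotropic curvature, which amounts to nonnegative orthogonal bisectional curvature. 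Mok's theorem and the Siu--Yau/Mori resolution of the Frankel conjecture then give either a Hermitian symmetric space, covered by $(iii)$, or biholomorphism to $\CP^{n/2}$, giving $(ii)$. The complex structure is parallel for $g(t)$ and passes to $g(0)$ by the same limiting argument.

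\emph{Step 4, the main obstacle: the generic case.} This is the case where $g(t)$ has positive isotropic curvature. Micallef--Moore give $\pi_k(M)=0$ for $2\leq k\leq n/2$, but a compact manifold with positive isotropic curvature need not be a spherical space form; connected sums with $\Sf^1\times\Sf^{n-1}$ are examples. So one must show that the flow, or the compactness of the universal cover, rules these out. I would first aim for a finite fundamental group, arguing that an infinite one leads to a splitting of the universal cover with a line factor and hence to local reducibility. I would then run the flow on the universal cover and use Brendle's pinching-set convergence to a constant curvature limit. I expect this step to be the hardest: plain positive isotropic curvature is not known to be a cone to which Hamilton's pinching estimates apply directly. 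The natural first attempt is to show that the strong maximum principle upgrades the condition to the $\R^2$-stabilized positivity of Brendle--Schoen, where their convergence theorem is available.
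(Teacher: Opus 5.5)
Your Step 4 cannot be completed, and the obstruction is not technical: the statement as quoted is false. The paper gives no proof and only cites \cite{BSacta}. There the hypothesis is that $M\times\R^2$ has nonnegative isotropic curvature, which is far stronger than nonnegative isotropic curvature of $M$; setting the two $\R^2$-parameters to zero, it already forces nonnegative sectional curvature.

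Under the weaker hypothesis there are counterexamples. On $\Sf^1\times\Sf^{n-1}$, $n\geq4$, take $g=ds^2+\phi(s)^2g_{\Sf^{n-1}}$ with $\phi$ periodic, nonconstant and $C^2$-close to $1$.
\begin{itemize}
\item For $\phi\equiv1$, an orthonormal $4$-frame with $a_i=\langle e_i,\partial_s\rangle$ has $K^c(\sigma_F)=4-2\sum_{i=1}^4a_i^2\geq2$. By openness, $g$ has positive isotropic curvature.
\item The curvature operator of $g$ is diagonal, with eigenvalues $-\phi''/\phi$ on $\partial_s\wedge e_j$ and $(1-\phi'^2)/\phi^2$ on $e_i\wedge e_j$. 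Since $|\phi'|<1$, it is invertible wherever $\phi''\neq0$.
\item By Ambrose--Singer the restricted holonomy is then $SO(n)$, so $g$ is locally irreducible.
\end{itemize}
Yet $\pi_1=\Z$ and the universal cover is noncompact, so none of (i)--(iii) holds. Generic metrics of positive isotropic curvature on the connected sums you mention are counterexamples for the same reason. They should have been a warning sign, not something to rule out.

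Both moves you propose in Step 4 fail on this example.
\begin{itemize}
\item ``Infinite $\pi_1$ gives a line factor'' is the Cheeger--Gromoll splitting theorem, which needs $\Ric\geq0$. Nonnegative isotropic curvature does not provide this. The universal cover above contains the lines $s\mapsto(s,x)$ but does not split, and $\Ric(\partial_s,\partial_s)=-(n-1)\phi''/\phi<0$ wherever $\phi''>0$.
\item No strong maximum principle can upgrade the hypothesis to the $\R^2$-stabilized condition. A strong maximum principle only tells you where an already preserved condition becomes strict. Here the stabilized condition fails at $t=0$. It also cannot hold for $g(t)$ with small $t>0$, where $g(t)$ is still locally irreducible, since \cite{BSacta} would then apply to $g(t)$ and contradict $\pi_1=\Z$.
\end{itemize}

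With the correct hypothesis, your outline does work, essentially as in \cite{BSacta}:
\begin{itemize}
\item $\sec\geq0$, Cheeger--Gromoll and local irreducibility give a compact universal cover.
\item The stabilized condition is preserved and becomes strict for $t>0$ in the full-holonomy case. The Brendle--Schoen convergence theorem then gives a constant-curvature limit.
\item In the K\"ahler case, $\sec\geq0$ gives nonnegative bisectional curvature, so Mok's theorem applies as stated.
\item The quaternionic-K\"ahler case needs a separate rigidity argument. It cannot end in a contradiction, since $\HP^{n/4}$ has this holonomy.
\end{itemize}
Under plain nonnegative isotropic curvature, the correct conclusion in the generic branch is only that $M$ admits a metric of positive isotropic curvature (Seshadri). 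The K\"ahler branch would then also need an extension of Mok's theorem to nonnegative \emph{orthogonal} bisectional curvature.
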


\subsection{The pinching condition and the isotropic curvature}
The aim of this section is to investigate in detail the relationship between the pinching condition \eqref{1} and isotropic curvature. The auxiliary results obtained are pointwise in nature.

The following algebraic lemma was essentially proved in \cite{XZ}; we include a proof here for completeness, to make the paper self-contained.

\begin{lemma}\label{alg1}
Let $X=(x_{ij})$ be a real symmetric $4\times4$ matrix 
and define the function
\begin{align*}
f_{\varepsilon}(X)=&\sum_{i=1}^{2}\sum_{j=3}^{4}\big(x_{ii}x_{jj}-x^2_{ij}\big)+2\varepsilon(x_{14}x_{23}-x_{13}x_{24}),
\end{align*}
where $\varepsilon=\pm1$. Then 
$$
f_{\varepsilon}(X)\geq\frac{1}{2}(\trace X)^2-\|X\|^2,
$$
where $\|X\|$ denotes the Frobenius norm of $X$. 
Moreover, equality holds only if and only if 
$$
x_{11}=x_{22},\quad x_{33}=x_{44},\quad x_{12}=x_{34}=0,\quad 
x_{23}=-\varepsilon x_{14},\quad x_{24}=\varepsilon x_{13}.
$$
\end{lemma}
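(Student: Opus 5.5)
The plan is to prove the inequality by showing that the difference
$$
D_\varepsilon(X)=f_\varepsilon(X)-\frac12(\trace X)^2+\|X\|^2
$$
is a sum of squares of linear forms in the entries of $X$. The equality case is then read off by requiring each square to vanish. No earlier result of the paper is needed; this is pure algebra.

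\emph{Step 1.} Expand the two norms. We have $(\trace X)^2=\sum_i x_{ii}^2+2\sum_{i<j}x_{ii}x_{jj}$ and $\|X\|^2=\sum_i x_{ii}^2+2\sum_{i<j}x_{ij}^2$. Hence
$$
\|X\|^2-\frac12(\trace X)^2=\frac12\sum_i x_{ii}^2-\sum_{i<j}x_{ii}x_{jj}+2\sum_{i<j}x_{ij}^2 .
$$

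\emph{Step 2.} Add $f_\varepsilon(X)$ and group the diagonal terms. The mixed products $x_{ii}x_{jj}$ with $i\in\{1,2\}$ and $j\in\{3,4\}$ cancel exactly. What remains on the diagonal is
$$
\tfrac12 x_{11}^2+\tfrac12 x_{22}^2-x_{11}x_{22}+\tfrac12 x_{33}^2+\tfrac12 x_{44}^2-x_{33}x_{44}=\tfrac12(x_{11}-x_{22})^2+\tfrac12(x_{33}-x_{44})^2 .
$$

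\emph{Step 3.} Group the off-diagonal terms. The entries $x_{12}$ and $x_{34}$ contribute $2x_{12}^2+2x_{34}^2$. For the four "cross" entries $x_{13},x_{14},x_{23},x_{24}$, the coefficient $2$ coming from $\|X\|^2$ is reduced by $1$ because of the term $-x_{ij}^2$ in $f_\varepsilon$. Together with the term $2\varepsilon(x_{14}x_{23}-x_{13}x_{24})$, and using $\varepsilon^2=1$, this gives
$$
x_{13}^2+x_{14}^2+x_{23}^2+x_{24}^2+2\varepsilon x_{14}x_{23}-2\varepsilon x_{13}x_{24}=(x_{23}+\varepsilon x_{14})^2+(x_{24}-\varepsilon x_{13})^2 .
$$

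\emph{Step 4.} Combining Steps 2 and 3,
$$
D_\varepsilon(X)=\tfrac12(x_{11}-x_{22})^2+\tfrac12(x_{33}-x_{44})^2+2x_{12}^2+2x_{34}^2+(x_{23}+\varepsilon x_{14})^2+(x_{24}-\varepsilon x_{13})^2\geq0,
$$
which is the asserted inequality.

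Equality holds if and only if every square vanishes, that is,
$$
x_{11}=x_{22},\quad x_{33}=x_{44},\quad x_{12}=x_{34}=0,\quad x_{23}=-\varepsilon x_{14},\quad x_{24}=\varepsilon x_{13},
$$
exactly as stated. The only point requiring care is the sign bookkeeping in Step 3, where one must complete the square with $\varepsilon^2=1$; the rest is routine expansion.
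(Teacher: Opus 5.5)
Your proof is correct: I checked the identity term by term. It takes a genuinely different route from the paper. You establish the exact identity
\[
f_\varepsilon(X)-\tfrac12(\trace X)^2+\|X\|^2=\tfrac12(x_{11}-x_{22})^2+\tfrac12(x_{33}-x_{44})^2+2x_{12}^2+2x_{34}^2+(x_{23}+\varepsilon x_{14})^2+(x_{24}-\varepsilon x_{13})^2 .
\]
The paper instead minimizes $f_\varepsilon$ by Lagrange multipliers on the set where $\trace X=\sigma_1$ and $\|X\|^2=\sigma_2$. It writes down the critical-point system and identifies $(\sigma_1^2-2\sigma_2)/2$ as the smallest critical value, attained under the stated relations.

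The multiplier approach has the merit of explaining where the constant $\tfrac12$ comes from, since it computes the actual constrained minimum. To be airtight, however, it needs several steps that the paper only sketches:
\begin{itemize}
\item compactness of the constraint set, so that a minimizer exists;
\item regularity of the constraints away from multiples of the identity;
\item a full enumeration of the critical branches: the off-diagonal equations force either $\mu\in\{0,1\}$, with $\mu$ the multiplier of the norm constraint, or $x_{13}=x_{14}=x_{23}=x_{24}=0$, and every branch must be evaluated and compared;
\item for the ``if and only if,'' the argument that any matrix attaining equality is a constrained minimizer, hence a critical point, and that every critical point with the minimal value satisfies the stated relations.
\end{itemize}

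Your sum-of-squares identity makes all of this unnecessary. The inequality and both directions of the equality characterization follow at once. Sharpness is also immediate, because all the squares can vanish simultaneously.
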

\proof
We apply the method of Lagrange multipliers to find the 
minimum of the function $f_{\varepsilon}$ subject to 
the constraints
\begin{align*}
\sum_{i=1}^4x_{ii}=\sigma_1, \quad 
\sum_{i,j=1}^4x_{ij}^2=\sigma_2. 
\end{align*}
We consider the function 
\begin{equation*}
g=f_{\varepsilon}+\lambda\big(\sum_{i=1}^4x_{ii}-\sigma_1\big )
+\mu\big(\sum_{i,j=1}^4x_{ij}^{2}-\sigma_2\big),
\end{equation*}
where $\lambda$ and $\mu$ are Lagrange multipliers. 
Differentiating with respect to the variables $x_{ii},1\leq i\leq4$, 
and $x_{ij},1\leq i<j\leq4$, we obtain the following system 
characterizing the critical points:
\begin{align*}
\sum_{j=3}^4&x_{jj}+\lambda+2\mu x_{ii}=0,\;i=1,2,\quad
\sum_{i=1}^2x_{ii}+\lambda+2\mu x_{jj}=0,\; j=3,4,\\
&-x_{13}-\varepsilon x_{24}+2\mu x_{13}=0,\quad 
-x_{14}+\varepsilon x_{23}+2\mu x_{14}=0,\\
&-x_{23}+\varepsilon x_{14}+2\mu x_{23}=0,\quad 
-x_{24}-\varepsilon x_{13}+2\mu x_{24}=0,\\
&\mu x_{12}=\mu x_{34}=0,\quad\sum_{i=1}^4x_{ii}
-\sigma_1=0,\quad\sum_{i,j=1}^4x_{ij}^{2}-\sigma_2=0.
\end{align*}

In the case $\lambda=0$, we obtain the critical
value $\sigma_1^2/4$. When $\lambda\neq0$, it follows 
that 
$$
x_{11}=x_{22},\quad x_{33}=x_{44},\quad x_{12}=x_{34}=0,\quad 
x_{23}=-\varepsilon x_{14},\quad x_{24}=\varepsilon x_{13}.
$$
Substituting these relations into the constraints yields the 
critical value $(\sigma_1^2-2\sigma_2)/2$, which is the 
minimum of the function $f_\varepsilon$. 
%This completes the proof.
\qed
\vspace{1ex}

\begin{lemma}\label{alg2}
Let $f\colon M^n\to\accentset{\sim}{M}^{n+m}$, $n\geq4$, be an 
isometric immersion of an $n$-dimensional Riemannian 
manifold $M^n$. For any point $x\in M^n$, any 
orthonormal basis $\{e_i\}_{i=1}^n$ 
of $T_xM$, and any orthonormal basis 
$\{\xi_\alpha\}_{\a=1}^m$ 
of the normal space $N_fM(x)$, the following 
assertions hold at $x$:
\vspace{1ex}

\noindent $(i)$ For $\varepsilon=\pm1$, we have 
\begin{align}\label{ineq2alg}
\sum_{\alpha=1}^m\Big(&\sum_{i=1}^{2}\sum_{j=3}^{4}\big(h^{\alpha}_{ii}h^{\alpha}_{jj}-(h^\alpha_{ij})^2\big)+2\varepsilon(h^{\alpha}_{14}h^{\alpha}_{23}-h^{\alpha}_{13}h^{\alpha}_{24})\Big)
\geq \frac{n^2H^2}{n-2}-S, 
\end{align}
where $h^{\alpha}_{ij}=\<A_{\xi_\alpha}e_i,e_j\>$, 
and $A_{\xi_\alpha}$ 
denotes the shape operator associated with the 
normal vector $\xi_\alpha$.
\vspace{1ex}

\noindent $(ii)$ Suppose now that equality holds in \eqref{ineq2alg}. 
Then, for every $1\leq \alpha\leq m$, 
\be\label{ineq3alg}
h^{\alpha}_{11}=h^{\alpha}_{22},\quad h^{\alpha}_{33}
=h^{\alpha}_{44},\quad h^{\alpha}_{12}
=h^{\alpha}_{34}=0,\quad 
h^{\alpha}_{23}=-\varepsilon h^{\alpha}_{14},\quad h^{\alpha}_{24}
=\varepsilon h^{\alpha}_{13}.
\ee
Moreover, if $n\geq5$, 
then there exist numbers $\rho_\alpha,1\leq \alpha\leq m$, 
such that
\be\label{ineq4alg}
h^{\alpha}_{ij}=0\;\;\text{for all}
\;\;1\leq i\leq 4,5\leq j\leq n, 
\quad h^{\alpha}_{ij}=\delta_{ij}\rho_\alpha\;\;\text{for each}
\;\;5\leq i,j\leq n 
\ee
and
\be\label{ineq5alg}
h^{\alpha}_{11}+h^{\alpha}_{44}=\rho_\alpha\quad\text{for all }
\; 1\leq \alpha\leq m.
\ee
\end{lemma}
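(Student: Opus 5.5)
The plan is to reduce the statement, for each normal direction $\xi_\alpha$ separately, to Lemma \ref{alg1} applied to the upper-left $4\times4$ block of the shape operator. Then we control the remaining entries of $A_{\xi_\alpha}$ by elementary inequalities, sum over $\alpha$, and finally read off the equality case from the equality cases of each inequality used.

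Fix $\alpha$ and let $X^\alpha=(h^\alpha_{ij})_{1\leq i,j\leq4}$. By Lemma \ref{alg1}, the $\alpha$-th summand on the left of \eqref{ineq2alg} is at least $\frac12 a_\alpha^2-\|X^\alpha\|^2$, where $a_\alpha=\trace X^\alpha$. Put $b_\alpha=\sum_{j\geq5}h^\alpha_{jj}$, so that $a_\alpha+b_\alpha=\trace A_{\xi_\alpha}=n\<\mathcal H_f,\xi_\alpha\>$. Decompose
$$
\|A_{\xi_\alpha}\|^2=\|X^\alpha\|^2+2\sum_{i\leq4<j}(h^\alpha_{ij})^2+\sum_{i,j\geq5}(h^\alpha_{ij})^2 .
$$
For the last block, Cauchy--Schwarz gives $\sum_{i,j\geq5}(h^\alpha_{ij})^2\geq b_\alpha^2/(n-4)$ when $n\geq5$. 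Hence
$$
\tfrac12 a_\alpha^2-\|X^\alpha\|^2+\|A_{\xi_\alpha}\|^2\geq \frac{a_\alpha^2}{2}+\frac{b_\alpha^2}{n-4}\geq\frac{(a_\alpha+b_\alpha)^2}{n-2},
$$
where the last step is the inequality $x^2/p+y^2/q\geq (x+y)^2/(p+q)$ with $p=2$ and $q=n-4$. For $n=4$ we simply have $b_\alpha=0$ and no mixed terms, and the bound reads $a_\alpha^2/2$. Summing over $\alpha$ and using $\sum_\alpha(a_\alpha+b_\alpha)^2=n^2H^2$ and $\sum_\alpha\|A_{\xi_\alpha}\|^2=S$ yields \eqref{ineq2alg}.

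For part $(ii)$, equality in the sum forces equality in every inequality for every $\alpha$, since each inequality holds termwise.
\begin{itemize}
\item Equality in Lemma \ref{alg1} gives \eqref{ineq3alg}.
\item The discarded mixed terms must vanish, so $h^\alpha_{ij}=0$ for $i\leq4<j$.
\item Equality in Cauchy--Schwarz for the lower block forces it to be a multiple $\rho_\alpha I$; this gives \eqref{ineq4alg}, with $b_\alpha=(n-4)\rho_\alpha$.
\item Equality in $x^2/p+y^2/q\geq(x+y)^2/(p+q)$ means $a_\alpha/2=b_\alpha/(n-4)=\rho_\alpha$.
\end{itemize}
Since $h^\alpha_{11}=h^\alpha_{22}$ and $h^\alpha_{33}=h^\alpha_{44}$, we have $a_\alpha=2(h^\alpha_{11}+h^\alpha_{44})$, and \eqref{ineq5alg} follows.

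The only delicate point is arranging the estimate so that the weights $2$ and $n-4$ add up to exactly $n-2$. Everything else is bookkeeping, and I do not expect a real obstacle.
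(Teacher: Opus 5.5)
Your proposal is correct and follows the paper's proof essentially step for step. You apply Lemma~\ref{alg1} to the upper $4\times4$ block, use Cauchy--Schwarz on the lower $(n-4)\times(n-4)$ block, combine via the weighted inequality with weights $2$ and $n-4$, discard the mixed terms, and read off the equality conditions termwise in $\alpha$.
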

\proof
If $n=4$, then the result follows directly from Lemma \ref{alg1}. 

Suppose now that $n\geq5$. By Lemma \ref{alg1}, we have 
\be\label{min1}
\sum_{i=1}^2\sum_{j=3}^4\big(h^{\alpha}_{ii}h^{\alpha}_{jj}-(h^\alpha_{ij})^2\big)+2\varepsilon(h^{\alpha}_{14}h^{\alpha}_{23}-h^{\alpha}_{13}h^{\alpha}_{24})
\geq\frac{1}{2}\big(\sum_{i=1}^4h^{\alpha}_{ii}\big)^2-\sum_{i,j=1}^4(h^\alpha_{ij})^2, 
\ee
\vspace{0.5ex}
\vspace*{-\baselineskip}
for any $1\leq \alpha\leq m$. Summing over $\a$ yields
\begin{align}\label{min2}
\sum_{\alpha=1}^m\Big(\sum_{i=1}^{2}&\sum_{j=3}^{4}\big(h^{\alpha}_{ii}h^{\alpha}_{jj}-(h^\alpha_{ij})^2\big)+2\varepsilon(h^{\alpha}_{14}h^{\alpha}_{33}-h^{\alpha}_{13}h^{\alpha}_{24})\Big)\nonumber \\
\geq &\sum_{\alpha=1}^m\Big(\frac{1}{2}\big(\sum_{i=1}^4h^{\alpha}_{ii}\big)^2
-\|A_{\xi_\a}\|^2+2\sum_{i=1}^4\sum_{j\geq5}^n(h^\alpha_{ij})^2
+\sum_{i,j\geq5}^n(h^\alpha_{ij})^2\Big). 
\end{align}
Using the Cauchy-Schwarz inequality, we obtain 
\be\label{min3}
\frac{1}{n-4}\big(\sum_{i\geq5}^nh^{\alpha}_{ii}\big)^2
\leq\sum_{i\geq5}^n(h^\alpha_{ii})^2
\leq\sum_{i,j\geq5}^n(h^\alpha_{ij})^2.
\ee 
Substituting this into \eqref{min2} gives 
\begin{align}\label{min4}
\sum_{\alpha=1}^m\Big(\sum_{i=1}^{2}&\sum_{j=3}^{4}\big(h^{\alpha}_{ii}h^{\alpha}_{jj}-(h^\alpha_{ij})^2\big)+2\varepsilon(h^{\alpha}_{14}h^{\alpha}_{23}-h^{\alpha}_{13}h^{\alpha}_{24})\Big)\nonumber \\
\geq &\sum_{\alpha=1}^m\Big(\frac{1}{2}\big(\sum_{i=1}^4h^{\alpha}_{ii}\big)^2
+\frac{1}{n-4}\big(\sum_{i\geq5}^nh^{\alpha}_{ii}\big)^2+2\sum_{i=1}^4\sum_{j\geq5}^n(h^\alpha_{ij})^2\Big)-S.
\end{align}
Next we observe that 
\be\label{min40}
\frac{1}{2}\big(\sum_{i=1}^4h^{\alpha}_{ii}\big)^2
+\frac{1}{n-4}\big(\sum_{i\geq5}^nh^{\alpha}_{ii}\big)^2
\geq \frac{1}{n-2}(\trace A_{\xi_\a})^2. 
\ee

Applying \eqref{min40} in \eqref{min4}, we finally obtain 
\begin{align}\label{min5}
\sum_{\alpha=1}^m\Big(\sum_{i=1}^{2}&\sum_{j=3}^{4}\big(h^{\alpha}_{ii}h^{\alpha}_{jj}-(h^\alpha_{ij})^2\big)+2\varepsilon(h^{\alpha}_{14}h^{\alpha}_{23}-h^{\alpha}_{13}h^{\alpha}_{24})\Big)\nonumber \\
\geq &\frac{n^2H^2}{n-2}-S+2\sum_{i=1}^4\sum_{j\geq5}^n(h^\alpha_{ij})^2
\geq \frac{n^2H^2}{n-2}-S,
\end{align}
which completes the proof of \eqref{ineq2alg}. 

Suppose now that equality holds in \eqref{ineq2alg}. Then 
all intermediate inequalities \eqref{min1}-\eqref{min5} 
must hold as equalities. In particular, equality in 
\eqref{min1} implies that all conditions in 
\eqref{ineq3alg} hold. Equality in \eqref{min3} yields 
$$
h^\alpha_{55}=\dots=h^\alpha_{nn}\quad \text{and}\quad 
h^\alpha_{ij}=0\quad \text{for }\; i,j\geq5, \;\, i\neq j.
$$
Equality in \eqref{min40} implies 
\be\label{41}
\sum_{i=1}^4h^{\alpha}_{ii}=
\frac{2}{n-4}\sum_{i\geq5}^nh^{\alpha}_{ii}.
\ee
Moreover, equality in \eqref{min5} forces 
$$
h^{\alpha}_{ij}=0\quad\text{for all }\; 1\leq i\leq4,\; j\geq5.
$$
Hence \eqref{ineq4alg} holds with 
$\rho_a=h^\alpha_{55}=\dots=h^\alpha_{nn}$. 
Finally, \eqref{ineq5alg} follows directly from \eqref{ineq4alg}, 
\eqref{41} and the first two conditions in \eqref{ineq3alg}. 
\qed
\vspace{1.0ex}

Let $f\colon M^n\to\accentset{\sim}{M}^{n+m}$, $n\geq4$, 
be an isometric immersion of a Riemannian manifold 
$(M^n, \<\cdot,\cdot\>)$ into a Riemannian manifold 
$(\accentset{\sim}{M}^{n+m}, \<\cdot,\cdot\>)$. Denote 
by $R$ and $\accentset{\sim}{R}$ the curvature tensors 
of $M$ and $\accentset{\sim}{M}$, respectively. 
We define the {\em{restricted curvature tensor}} of 
$f$ as the tensor field 
$$
\accentset{\sim}{R}^f\colon TM\times TM\times TM \to TM
$$ 
given by
$$
\<\accentset{\sim}{R}^f(X,Y)Z,W\>=\<\accentset{\sim}{R}(f_*X,f_*Y)f_*Z,f_*W\>
\quad\text{for all }X,Y,Z,W\in TM.
$$ 
Clearly, if at a point $x\in M^n$ one has 
$\accentset{\sim}{K}^f_{\max}(x)=\accentset{\sim}{K}^f_{\min}(x)$, 
as defined in the introduction, then
$$
\accentset{\sim}{R}^f(X,Y)Z=
\accentset{\sim}{K}^f_{\min}(x)\big(\<Y,Z\>X-\<X,Z\>Y\big)\quad\text{for all }X,Y,Z\in T_xM.
$$

The Gauss equation for the immersion $f$ can be written as
\begin{align*}
\<R(X,Y)Z,W\>&=\<\accentset{\sim}{R}^f(X,Y)Z,W\>+\<\a_f(X,W),\a_f(Y,Z)\>\\
&-\<\a_f(X,Z),\a_f(Y,W)\>, 
\end{align*}
for all $X,Y,Z,W\in TM$, where $\a_f$ is the second 
fundamental form of $f$. 

\iffalse
We recall that the functions $\accentset{\sim}{K}^f_{\max},\accentset{\sim}{K}^f_{\min}\colon M^n\to\R$ are defined by 
\begin{align*}
\accentset{\sim}{K}^f_{\max}(x)&=\max\big\{\accentset{\sim}{K} (f_{*x}\pi):\pi\subset T_xM\;\text{is a two-plane}
\big\},\\
\accentset{\sim}{K}^f_{\min}(x)&=\min\big\{\accentset{\sim}{K} (f_{*x}\pi):\pi\subset T_xM\;\text{is a two-plane}
\big\},
\end{align*}
where $\accentset{\sim}{K} (f_{*x}\pi)$ denotes the sectional curvature of the 
ambient manifold $\accentset{\sim}{M}^{n+m}$ along the two-plane 
$f_{*x}\pi\subset T_{f(x)}\accentset{\sim}{M}$ at the point $f(x)$. 
\fi

Given an orthonormal basis $\{e_i\}_{i=1}^n$
of $T_xM$ at a point $x\in M^n$, we 
set for simplicity 
$$
\accentset{\sim}{R}^f_{ijk\ell}=\<\accentset{\sim}{R}^f(e_i,e_j)e_k,e_\ell\>
\quad \text{and}\quad \a_{ij}=\a_f(e_i,e_j), 
$$
for any $1\leq i,j,k,\ell \leq n$. This notation will be used 
throughout the paper. 

\begin{lemma}\label{R}
Let $f\colon M^n\to\accentset{\sim}{M}^{n+m}$, 
$n\geq4$, be an isometric immersion, and let $\{e_i\}_{i=1}^n$ 
be an orthonormal basis of $T_pM$ at a point 
$p\in M^n$. Suppose that 
$$
\accentset{\sim}{R}^f_{ijji}=\accentset{\sim}{K}^f_{\min}(p)\quad 
\text{for }\; i=1,2 \; \text{ and }\; j=3,4.
$$ 
Then the following assertions hold: 
\begin{align}
\accentset{\sim}{R}^f_{kiij}=\accentset{\sim}{R}^f_{kjji}=0\quad\text{for }\; & i=1,2,
\;\, j=3,4,\;\, 
1\leq k\leq n,\;\, k\neq i, k\neq j \label{R1},\\
&\accentset{\sim}{R}^f_{1342}=\accentset{\sim}{R}^f_{1423}. \label{R2}
\end{align}
\end{lemma}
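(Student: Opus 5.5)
The approach is a first-variation argument. The restricted tensor $\accentset{\sim}{R}^f$ at $p$ is an algebraic curvature tensor on $T_pM$: it has the symmetries of a curvature tensor and satisfies the first Bianchi identity, being the pull-back of $\accentset{\sim}{R}$ under the linear isometry $f_{*p}$. Hence the function
$$
\sigma\mapsto \accentset{\sim}{K}(f_{*p}\sigma)
$$
on the Grassmannian of two-planes of $T_pM$ is smooth and attains its minimum $\accentset{\sim}{K}^f_{\min}(p)$ at each plane $\spa\{e_i,e_j\}$ with $i=1,2$ and $j=3,4$.

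\textbf{Proof of \eqref{R1}.} Fix $i\in\{1,2\}$, $j\in\{3,4\}$ and $k\neq i,j$. Consider the curve of orthonormal pairs
$$
u(t)=\cos t\, e_i+\sin t\, e_k,\qquad v=e_j,
$$
and set $\varphi(t)=\<\accentset{\sim}{R}^f(u(t),v)v,u(t)\>$, the sectional curvature of $\spa\{u(t),e_j\}$. Then $\varphi$ attains its minimum at $t=0$, so $\varphi'(0)=0$. Expanding,
$$
\varphi'(0)=2\<\accentset{\sim}{R}^f(e_i,e_j)e_j,e_k\>,
$$
so $\accentset{\sim}{R}^f_{ijjk}=0$. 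Up to the symmetries of the tensor, this is $\accentset{\sim}{R}^f_{kjji}=0$. Rotating $e_j$ toward $e_k$ instead, with $e_i$ fixed, gives $\accentset{\sim}{R}^f_{kiij}=0$ in the same way.

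\textbf{Proof of \eqref{R2}.} Here the variation mixes two index pairs at once. Take, for instance, the planes spanned by
$$
u(t)=\cos t\, e_1+\sin t\, e_2,\qquad w(t)=\cos t\, e_3+\sin t\, e_4 .
$$
A first variation rotating both vectors simultaneously should no longer give a trivial derivative. One option is a second-order condition: every plane $\spa\{\cos t\,e_1+\sin t\,e_2,\ \cos s\,e_3+\sin s\,e_4\}$ has curvature at least $\accentset{\sim}{K}^f_{\min}$, with equality at the four coordinate corners. Alternatively, use the planes
$$
\spa\{e_1\cos t+e_2\sin t,\ e_3\cos t\pm e_4\sin t\}.
$$

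Expand $\varphi(t)$ for this second family. It is
\begin{align*}
\varphi(t)=&\;\cos^4t\,R_{1331}+\cos^2t\sin^2t\,\big(R_{1441}+R_{2332}\big)+\sin^4t\,R_{2442}\\
&+\text{cross terms } \pm 2\cos^2t\sin^2t\,\big(R_{1342}+R_{1432}\big)\\
&+\text{terms of the form } \cos^3t\sin t\,(\dots),
\end{align*}
where $R$ abbreviates $\accentset{\sim}{R}^f$. The odd terms involve components such as $R_{1332}$ and $R_{1341}$, which vanish by \eqref{R1}. Substituting $R_{ijji}=K:=\accentset{\sim}{K}^f_{\min}(p)$, one gets
$$
\varphi(t)=K\pm 2\sin^2t\cos^2t\,\big(R_{1342}+R_{1432}\big)+\dots\geq K
$$
for both choices of sign. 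This forces $R_{1342}+R_{1432}=0$, that is, $R_{1342}=R_{1423}$. The Bianchi identity is consistent with this and gives $R_{1234}=R_{1342}-R_{1432}$-type relations.

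The main obstacle is the bookkeeping of this expansion. One has to confirm that every remaining term is either killed by \eqref{R1} or equal to $K$ times a trigonometric factor summing to $K$, so that the sign-alternating cross term is isolated.
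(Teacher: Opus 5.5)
Your proposal is correct and is essentially the paper's proof. \eqref{R1} comes from the vanishing first variation of the sectional curvature at the minimizing planes; the paper differentiates $F(x,y)$ on $\R^n\times\R^n$ rather than along curves, which is the same computation. For \eqref{R2} the paper uses the two-parameter family $\spa\{\cos\theta\,e_1+\sin\theta\,e_2,\ \cos\varphi\,e_3+\sin\varphi\,e_4\}$, whose curvature is $\accentset{\sim}{K}^f_{\min}(p)+\tfrac12(\accentset{\sim}{R}^f_{1342}+\accentset{\sim}{R}^f_{1432})\sin2\theta\sin2\varphi$, and your planes are its diagonal $\varphi=\pm\theta$.

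The bookkeeping you flag closes exactly: after \eqref{R1} nothing remains beyond the isolated cross term. The only slip is in your unused Bianchi aside, where the sign is reversed; it actually gives $\accentset{\sim}{R}^f_{1234}=\accentset{\sim}{R}^f_{1432}-\accentset{\sim}{R}^f_{1342}$.
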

\proof 
We consider the function $F\colon U\to\R$ defined by 
\begin{align*}
F(x,y)&=\<\accentset{\sim}{R}^f\big(\tau(x),\tau(y)\big)\tau(y),\tau(x)\>-\accentset{\sim}{K}^f_{\min}(p)\|\tau(x)\wedge\tau(y)\|^2\\
&=\sum_{i,j,k,\ell=1}^nx_ix_jy_ky_\ell \accentset{\sim}{R}^f_{ik\ell j}-\accentset{\sim}{K}^f_{\min}(p)\big(\|x\|^2\|y\|^2-\<x,y\>^2\big), 
\end{align*}
where $x=(x_1,\dots,x_n)$, $y=(y_1,\dots,y_n)$,
$
U=\left\{(x,y)\in\R^n\times \R^n: x\wedge y\neq0\right\}, 
$ 
and $\tau\colon\R^n\to T_pM$ 
is the linear isometry given by $\tau(x)=\sum_{i}x_ie_i$.

By our assumption, the nonnegative function $F$ attains its minimum 
at $\varepsilon_{ij}=(\tau^{-1}e_i,\tau^{-1}e_j)\in U$, for $i=1,2$ and $j=3,4$. 
Hence, 
\be\label{fermat}
\frac{\partial F}{\partial x_k}(\varepsilon_{ij})
=\frac{\partial F}{\partial y_k}(\varepsilon_{ij})=0\quad\text{for all }\;i=1,2,\;\, 
j=3,4\; \text { and }\; 1\leq k\leq n. 
\ee
A direct computation using the algebraic properties of the curvature 
tensor yields 
$$
\frac{\partial F}{\partial x_k}(\varepsilon_{ij})
=2\accentset{\sim}{R}^f_{kjji}\quad \text{and}\quad \frac{\partial F}{\partial y_k}(\varepsilon_{ij})
=2\accentset{\sim}{R}^f_{kiij}\;\;\;\text{for}\;\;\;i=1,2,\;j=3,4\;\text{ and }\; k\neq i, k\neq j.
$$
Using \eqref{fermat} together with the curvature tensor symmetries, we obtain \eqref{R1}.

Next, consider the tangent vectors 
$$
v=\cos\theta e_1+\sin\theta e_2,\quad w=\cos\varphi e_3+\sin\varphi e_4,\quad\theta,\varphi\in\R. 
$$
Using \eqref{R1}, we compute 
$$
\<\accentset{\sim}{R}^f(v,w)w,v\>=\accentset{\sim}{K}^f_{\min}(p)+
\frac{1}{2}\big(\accentset{\sim}{R}^f_{1342}+\accentset{\sim}{R}^f_{1432}\big)\sin2\theta\sin2\varphi.
$$
Since 
$$
\<\accentset{\sim}{R}^f(v,w)w,v\>\geq\accentset{\sim}{K}^f_{\min}(p)
$$ 
for all 
$\theta,\varphi\in\R$, it follows that
$\accentset{\sim}{R}^f_{1342}+\accentset{\sim}{R}^f_{1432}=0$, 
which proves \eqref{R2}. 
\qed
\vspace{1ex}

\begin{lemma}\label{algc}
Let $f\colon M^n\to\accentset{\sim}{M}^{n+m}$, $n\geq4$, be an 
isometric immersion. For any point $x\in M^n$ and any 
orthonormal $4$-frame $\{e_i\}_{i=1}^4\subset T_xM$, the following 
assertions hold at $x$:
\vspace{1ex}

\noindent $(i)$ For $\varepsilon=\pm1$, we have 
\begin{align}\label{ineq1}
R_{1331}&+R_{1441}+R_{2332}+R_{2442}+2\varepsilon R_{1234}\nonumber\\
&\geq\frac{16}{3}\big(\accentset{\sim}{K}^f_{\min}(x)-\frac{1}{4}\accentset{\sim}{K}^f_{\max}(x)\big)
+\frac{n^2H^2(x)}{n-2}-S(x). 
\end{align}

\noindent $(ii)$ Assume equality in \eqref{ineq1} holds 
for an orthonormal $4$-frame $F=\{e_i\}_{i=1}^4\subset T_xM$. 
Then the following conditions are satisfied:
\begin{align}
\alpha_{11}=\alpha_{22},\quad\alpha_{33}
=\alpha_{44},\quad\alpha_{12}=&\alpha_{34}=0,\quad 
\alpha_{23}=-\varepsilon \alpha_{14},\quad\alpha_{24}
=\varepsilon\alpha_{13}, \label{c1}\\ 
\alpha_{11}+\alpha_{44}&=\frac{n}{n-2}\mathcal H_f(x), \label{c2}\\
\accentset{\sim}{R}^f_{1331}=\accentset{\sim}{R}^f_{1441}&=\accentset{\sim}{R}^f_{2332}=
\accentset{\sim}{R}^f_{2442}=\accentset{\sim}{K}^f_{\min}(x),\label{c3}\\
\accentset{\sim}{R}^f_{1234}=-2\accentset{\sim}{R}^f_{1342},\quad 
\accentset{\sim}{R}^f_{1342}=\frac{\varepsilon}{3}&\big(\accentset{\sim}{K}^f_{\max}(x)-
\accentset{\sim}{K}^f_{\min}(x)\big)
=\accentset{\sim}{R}^f_{1423},\label{c4}\\
\accentset{\sim}{R}^f_{kiij}=\accentset{\sim}{R}^f_{kjji}=0\quad \text{for }\; i=1,2,&\quad j=3,4,\quad 
1\leq k\leq n,\quad k\neq i,\quad k\neq j. \label{c5}
\end{align}
If $n\geq5$, then $\{e_i\}_{i=5}^n$ is chosen as any 
orthonormal basis of the orthogonal complement of 
$V_F=\spa\left\{e_i\right\}_{i=1}^4$ in $T_xM$. Moreover, the normal 
vector 
$\delta=n\mathcal H_f/(n-2)$ is a principal normal with 
$V_F^\perp\subset E_\delta(x)$.
\end{lemma}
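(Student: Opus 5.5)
Via the Gauss equation, the curvature expression splits into an ambient part and a second-fundamental-form part. Writing $\alpha_{ij}=\sum_\alpha h^\alpha_{ij}\xi_\alpha$, we have
\begin{align*}
R_{1331}+R_{1441}+R_{2332}+R_{2442}+2\varepsilon R_{1234}
&=\big(\accentset{\sim}{R}^f_{1331}+\accentset{\sim}{R}^f_{1441}+\accentset{\sim}{R}^f_{2332}+\accentset{\sim}{R}^f_{2442}+2\varepsilon\accentset{\sim}{R}^f_{1234}\big)\\
&\quad+\sum_{\alpha}\Big(\sum_{i=1}^2\sum_{j=3}^4\big(h^\alpha_{ii}h^\alpha_{jj}-(h^\alpha_{ij})^2\big)+2\varepsilon(h^\alpha_{14}h^\alpha_{23}-h^\alpha_{13}h^\alpha_{24})\Big).
\end{align*}
The sign convention for $R_{1234}$ should be checked once, but it only amounts to replacing $\varepsilon$ by $-\varepsilon$. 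Since both signs are allowed in Lemma \ref{alg2}, this is harmless. Lemma \ref{alg2}(i) bounds the second bracket below by $n^2H^2/(n-2)-S$. So part (i) reduces to the purely ambient inequality
$$
\accentset{\sim}{R}^f_{1331}+\accentset{\sim}{R}^f_{1441}+\accentset{\sim}{R}^f_{2332}+\accentset{\sim}{R}^f_{2442}+2\varepsilon\accentset{\sim}{R}^f_{1234}\geq \tfrac{16}{3}\big(\accentset{\sim}{K}^f_{\min}-\tfrac14\accentset{\sim}{K}^f_{\max}\big).
$$

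For the ambient inequality I will use the standard Berger-type estimate. By the Bianchi identity, $\accentset{\sim}{R}^f_{1234}=-\accentset{\sim}{R}^f_{1342}-\accentset{\sim}{R}^f_{1423}$. Each of $\accentset{\sim}{R}^f_{1342}$ and $\accentset{\sim}{R}^f_{1423}$ can be written through polarization as a combination of sectional curvatures of planes spanned by $(e_1\pm e_j)/\sqrt2$-type vectors. This gives $|\accentset{\sim}{R}^f_{1342}|,|\accentset{\sim}{R}^f_{1423}|\le \frac{1}{3}(\accentset{\sim}{K}^f_{\max}-\accentset{\sim}{K}^f_{\min})$, and hence $|\accentset{\sim}{R}^f_{1234}|\le\frac23(\accentset{\sim}{K}^f_{\max}-\accentset{\sim}{K}^f_{\min})$. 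Combining this with $\accentset{\sim}{R}^f_{ijji}\ge \accentset{\sim}{K}^f_{\min}$, the left side is at least
$$
4\accentset{\sim}{K}^f_{\min}-\tfrac43(\accentset{\sim}{K}^f_{\max}-\accentset{\sim}{K}^f_{\min})=\tfrac{16}{3}\accentset{\sim}{K}^f_{\min}-\tfrac43\accentset{\sim}{K}^f_{\max},
$$
which is exactly the required bound. This proves (i).

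For (ii), equality forces equality in both the Lemma \ref{alg2} estimate and the ambient estimate. Equality in Lemma \ref{alg2} gives \eqref{c1} directly from \eqref{ineq3alg}. For $n\geq5$, \eqref{ineq4alg} gives $h^\alpha_{ij}=0$ for $i\le4<j$ and $h^\alpha_{ij}=\rho_\alpha\delta_{ij}$ on $V_F^\perp$. Taking the trace with \eqref{ineq3alg} yields
$$
n\mathcal H_f=2(\alpha_{11}+\alpha_{44})+(n-4)\rho, \qquad \rho=\sum_\alpha\rho_\alpha\xi_\alpha .
$$
Together with \eqref{ineq5alg}, $\alpha_{11}+\alpha_{44}=\rho$, this gives $\rho=n\mathcal H_f/(n-2)$, which is \eqref{c2}. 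The same relations show that $\alpha(X,Y)=\langle X,Y\rangle\delta$ for $X\in V_F^\perp$ and all $Y$, so $V_F^\perp\subset E_\delta$. For $n=4$, I still need \eqref{c2}. From \eqref{c1}, $\operatorname{tr}\alpha=2(\alpha_{11}+\alpha_{44})$, so $\alpha_{11}+\alpha_{44}=2\mathcal H_f=\frac{n}{n-2}\mathcal H_f$.

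On the ambient side, equality forces $\accentset{\sim}{R}^f_{ijji}=\accentset{\sim}{K}^f_{\min}$, which is \eqref{c3}. It also forces $2\varepsilon\accentset{\sim}{R}^f_{1234}=-\frac43(\accentset{\sim}{K}^f_{\max}-\accentset{\sim}{K}^f_{\min})$ with both Berger bounds attained with the appropriate sign, so $\accentset{\sim}{R}^f_{1342}=\accentset{\sim}{R}^f_{1423}=\frac{\varepsilon}{3}(\accentset{\sim}{K}^f_{\max}-\accentset{\sim}{K}^f_{\min})$. Lemma \ref{R} gives $\accentset{\sim}{R}^f_{1342}=\accentset{\sim}{R}^f_{1423}$ independently, and Bianchi then gives $\accentset{\sim}{R}^f_{1234}=-2\accentset{\sim}{R}^f_{1342}$; this is \eqref{c4}. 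Since \eqref{c3} holds, Lemma \ref{R} applies and yields \eqref{c5}.

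The main obstacle is getting the sharp constant in the Berger-type bound for the mixed component $\accentset{\sim}{R}^f_{1234}$ with the right signs. The crude estimate $|\accentset{\sim}{R}^f_{1234}|\le\frac23(\accentset{\sim}{K}^f_{\max}-\accentset{\sim}{K}^f_{\min})$ follows from the polarization formula
$$
6\accentset{\sim}{R}^f_{1342}=K(e_1+e_4,e_3+e_2)-K(e_1+e_4,e_3-e_2)-K(e_1-e_4,e_3+e_2)+K(e_1-e_4,e_3-e_2)+\cdots
$$
(unnormalized planes, plus the companion terms), and each piece is bounded using $\accentset{\sim}{K}^f_{\min}\le K\le\accentset{\sim}{K}^f_{\max}$. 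Reading off the equality cases to get exact values rather than just bounds needs care. I plan to combine the equality information from Lemma \ref{R}, namely $\accentset{\sim}{R}^f_{1342}=\accentset{\sim}{R}^f_{1423}$, with the saturated estimate for $\accentset{\sim}{R}^f_{1234}$, which avoids analyzing each polarization term separately.
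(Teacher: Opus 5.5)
Your overall strategy is the paper's. You split the expression via the Gauss equation, bound the second-fundamental-form part by Lemma~\ref{alg2}, and bound the mixed ambient term by Berger's inequality $\varepsilon\accentset{\sim}{R}^f_{1234}\geq-\frac23(\accentset{\sim}{K}^f_{\max}-\accentset{\sim}{K}^f_{\min})$. In the equality case, \eqref{c1}--\eqref{c3} and the principal normal come from saturation, \eqref{c5} from Lemma~\ref{R}, and \eqref{c4} from \eqref{R2}, the saturated Berger bound and the Bianchi identity $\accentset{\sim}{R}^f_{1234}=-\accentset{\sim}{R}^f_{1342}-\accentset{\sim}{R}^f_{1423}$. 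Your derivation of \eqref{c2} is fine, including the trace argument for $n=4$ that the paper leaves implicit.

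The genuine error is your justification of Berger's inequality. The claim $|\accentset{\sim}{R}^f_{1342}|,|\accentset{\sim}{R}^f_{1423}|\leq\frac13(\accentset{\sim}{K}^f_{\max}-\accentset{\sim}{K}^f_{\min})$ is false.
\begin{itemize}
\item Why it must fail: $\accentset{\sim}{R}^f_{1342}$ is just the $(1234)$-component of the reordered orthonormal frame $(e_1,e_3,e_4,e_2)$. If the bound held for every frame, it would give $|\accentset{\sim}{R}^f_{1234}|\leq\frac13(\cdots)$, half of Berger's constant, and Berger's constant is sharp.
\item Explicit counterexample: take the Fubini--Study tensor of holomorphic curvature $4$, so sectional curvatures lie in $[1,4]$ (e.g.\ $f$ a totally geodesic $\CP^2\subset\CP^3$). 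With $e_4\perp\{e_1,Je_1\}$, choose $e_3=Je_1$ and $e_2=Je_4$. Then $\accentset{\sim}{R}^f_{1342}=\langle\accentset{\sim}{R}(e_1,Je_1)e_4,Je_4\rangle=-2$, whereas $\frac13(4-1)=1$.
\item What your own formula gives: your polarization identity, with normalized curvatures, writes $6\accentset{\sim}{R}^f_{1342}$ as four sectional curvatures minus four others. With unnormalized planes the factor is $24$, not $6$. This only yields $\frac23$ per component.
\end{itemize}

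The fix is to apply that same identity directly to $\accentset{\sim}{R}^f_{1234}$, or simply cite Berger as the paper does. Correspondingly, in (ii) the step ``both Berger bounds attained, so $\accentset{\sim}{R}^f_{1342}=\accentset{\sim}{R}^f_{1423}=\frac{\varepsilon}{3}(\cdots)$'' is invalid and should be dropped. \eqref{c4} then rests on your second argument alone: equality in Berger gives $\accentset{\sim}{R}^f_{1234}=-\frac{2\varepsilon}{3}(\accentset{\sim}{K}^f_{\max}-\accentset{\sim}{K}^f_{\min})$. Lemma~\ref{R}, applicable by \eqref{c3}, gives $\accentset{\sim}{R}^f_{1342}=\accentset{\sim}{R}^f_{1423}$, and Bianchi finishes the argument, exactly as in the paper. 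With that correction the proof is complete.
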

\proof
From Berger's inequality \cite{Bergineq}, 
we obtain 
\be\label{Berg}
\varepsilon\accentset{\sim}{R}^f_{1234}\geq
-\frac{2}{3}\big(\accentset{\sim}{K}^f_{\max}(x)-\accentset{\sim}{K}^f_{\min}(x)\big).
\ee
Using the Gauss equation, \eqref{Berg}, and part $(i)$ 
of Lemma \ref{alg2}, we obtain
\begin{align}\label{algc1}
R_{1331}&+R_{1441}+R_{2332}+R_{2442}+2\varepsilon R_{1234}\nonumber\\
&=\accentset{\sim}{R}^f_{1331}+\accentset{\sim}{R}^f_{1441}+\accentset{\sim}{R}^f_{2332}+\accentset{\sim}{R}^f_{2442}+2\varepsilon\accentset{\sim}{R}^f_{1234}\nonumber\\
&+\sum_{\alpha=1}^m\Big(\sum_{i=1}^{2}\sum_{j=3}^{4}\big(h^{\alpha}_{ii}h^{\alpha}_{jj}-(h^\alpha_{ij})^2\big)+2\varepsilon(h^{\alpha}_{14}h^{\alpha}_{23}-h^{\alpha}_{13}h^{\alpha}_{24})\Big)\nonumber\\
&\geq\frac{16}{3}\big(\accentset{\sim}{K}^f_{\min}(x)-\frac{1}{4}\accentset{\sim}{K}^f_{\max}(x)\big)
+\frac{n^2H^2(x)}{n-2}-S(x), 
\end{align}
which proves \eqref{ineq1}. 

Assume now that equality is attained in \eqref{ineq1} 
for an orthonormal $4$-frame $F=\{e_i\}_{i=1}^4$. 
If $n\geq5$, extend it to an orthonormal basis 
$\{e_i\}_{i=1}^n$ of $T_xM$. 
Choose an orthonormal basis 
$\{\xi_\alpha\}_{\a=1}^m$ of $N_fM(x)$. 
Since equality holds in \eqref{ineq1}, 
both \eqref{Berg} and \eqref{algc1} must also 
holds as equalities. Hence, part $(ii)$ of 
Lemma \ref{alg2} applies. Then \eqref{c1} and 
\eqref{c2} follow from \eqref{ineq3alg} and 
\eqref{ineq5alg}. 

From \eqref{ineq4alg}, we obtain that the normal vector 
$
\delta=\sum_{\a=1}^m\rho_\a\xi_\a
$
is a principal normal such that 
$V_F^\perp\subset E_\delta(x)$ 
when $n\geq5$. Moreover, combining \eqref{ineq5alg} with the 
first two equalities in \eqref{c1} gives 
$\delta=n\mathcal H_f(x)/(n-2)$. 

Since \eqref{Berg} holds with equality, we obtain
\be\label{Berg1}
\accentset{\sim}{R}^f_{1234}=
-\frac{2\varepsilon}{3}\big(\accentset{\sim}{K}^f_{\max}(x)-
\accentset{\sim}{K}^f_{\min}(x)\big).
\ee
Furthermore, the fact that \eqref{algc1} is an equality immediately implies \eqref{c3}. Lemma \ref{R} then yields \eqref{c5}. Finally, \eqref{c4} follows from \eqref{R2}, \eqref{Berg1}, and the Bianchi identity.
\qed
\vspace{1ex}

The following proposition, which establishes the relationship 
between the inequality \eqref{1} and the isotropic curvature, 
is one of the key auxiliary results used in our proofs.

\begin{proposition}\label{propu}
Let $f\colon M^n\to\accentset{\sim}{M}^{n+m}$, $n\geq4$, 
be an isometric immersion such that the inequality \eqref{1} 
holds at a point $x\in M^n$. Then the 
following assertions hold at $x$:
\vspace{1ex}

\noindent $(i)$ The manifold $M^n$ has nonnegative 
isotropic curvature at $x$. 
\vspace{1ex}

\noindent $(ii)$ Suppose that $M^n$ does not have 
positive isotropic curvature at $x$. Then equality 
holds in \eqref{1} at $x$. Moreover, for any orthonormal 
$4$-frame $F=\{e_i\}_{i=1}^4\subset T_xM$ satisfying 
\be\label{eq1}
R_{1331}+R_{1441}+R_{2332}+R_{2442}
+2\varepsilon R_{1234}=0,
\ee
with $\varepsilon=\pm1$, the conditions 
\eqref{c1}-\eqref{c5} in part $(ii)$ of 
Lemma \ref{algc} are satisfied at $x$ 
and
\be\label{c22}
\<\alpha_{11},\alpha_{44}\>=
\|\a_{13}\|^2+\|\a_{14}\|^2
-\frac{4}{3}\big(\accentset{\sim}{K}^f_{\min}(x)
-\frac{1}{4}\accentset{\sim}{K}^f_{\max}(x)\big).
\ee
If $n\geq5$, the normal vector 
$\delta=n\mathcal H_f/(n-2)$ 
is a principal normal with $V_F^\perp\subset E_\delta(x)$, where 
$V_F=\spa\left\{e_i\right\}_{i=1}^4$. 
Furthermore, for any rotations $R_\theta$ 
and $R_\varphi$, by angles $\theta$ and $\varphi$ 
respectively, on the subspaces 
$V_1=\spa\left\{e_1,e_2\right\}$ and $V_2=\spa\left\{e_3,e_4\right\}$, 
all these conditions also hold 
for the rotated $4$-frame 
$\{\accentset{\sim}{e}_i\}_{i=1}^4$, defined by 
$$
\accentset{\sim}{e}_i=R_\theta e_i,
\quad \accentset{\sim}{e}_j=R_\varphi e_j,\quad i=1,2,\quad j=3,4.
$$
After appropriate rotations on both $V_1$ 
and $V_2$, we may further assume that 
\be\label{perp}
\<\a_{13},\a_{14}\>=0. 
\ee
\end{proposition}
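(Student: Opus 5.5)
Part $(i)$ follows directly from Lemma \ref{algc}. For any orthonormal $4$-frame $F$ at $x$, the quantity $R_{1331}+R_{1441}+R_{2332}+R_{2442}-2R_{1234}$ equals $K^c(\sigma_F)$. Taking $\varepsilon=-1$ in \eqref{ineq1}, this quantity is bounded below by
$$
\frac{16}{3}\big(\accentset{\sim}{K}^f_{\min}-\frac14\accentset{\sim}{K}^f_{\max}\big)+\frac{n^2H^2}{n-2}-S,
$$
which is nonnegative by \eqref{1}. Every isotropic two-plane is of the form $\sigma_F$ for some orthonormal $4$-frame $F$ (Micallef--Moore), so $K^c\geq0$ on all isotropic planes at $x$.

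For $(ii)$, failure of positive isotropic curvature gives a frame with $K^c(\sigma_F)=0$, that is, \eqref{eq1} holds with $\varepsilon=-1$. For $\varepsilon=+1$, replacing $e_4$ by $-e_4$ shows that \eqref{eq1} is again an isotropic curvature expression. Now take any frame satisfying \eqref{eq1}. Then
$$
0\geq\frac{16}{3}\big(\accentset{\sim}{K}^f_{\min}-\frac14\accentset{\sim}{K}^f_{\max}\big)+\frac{n^2H^2}{n-2}-S\geq0,
$$
so equality holds in \eqref{1} at $x$, and also in \eqref{ineq1}. Lemma \ref{algc}$(ii)$ then yields \eqref{c1}--\eqref{c5}, together with the statement that $\delta=n\mathcal H_f/(n-2)$ is a principal normal with $V_F^\perp\subset E_\delta(x)$ when $n\geq5$.

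To get \eqref{c22}, I will expand \eqref{eq1} with the Gauss equation and substitute \eqref{c1}, \eqref{c3}, \eqref{c4}.

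\emph{Ambient part.} It equals $4\accentset{\sim}{K}^f_{\min}+2\varepsilon\accentset{\sim}{R}^f_{1234}=4\accentset{\sim}{K}^f_{\min}-\frac43(\accentset{\sim}{K}^f_{\max}-\accentset{\sim}{K}^f_{\min})$, using $\varepsilon^2=1$. This equals $\frac{16}{3}(\accentset{\sim}{K}^f_{\min}-\frac14\accentset{\sim}{K}^f_{\max})$.

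\emph{Second fundamental form part.} Using $\alpha_{11}=\alpha_{22}$ and $\alpha_{33}=\alpha_{44}$, the terms $\sum_{i\le2<j}\<\alpha_{ii},\alpha_{jj}\>$ give $4\<\alpha_{11},\alpha_{44}\>$. Using $\alpha_{23}=-\varepsilon\alpha_{14}$ and $\alpha_{24}=\varepsilon\alpha_{13}$, the terms $-\sum\|\alpha_{ij}\|^2$ give $-2(\|\alpha_{13}\|^2+\|\alpha_{14}\|^2)$. The cross term $2\varepsilon(\<\alpha_{14},\alpha_{23}\>-\<\alpha_{13},\alpha_{24}\>)$ gives another $-2(\|\alpha_{13}\|^2+\|\alpha_{14}\|^2)$.

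Setting the total equal to zero and dividing by $4$ gives \eqref{c22}. The only real work in the proof is tracking the signs in this computation.

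For rotation invariance, I use that \eqref{eq1} states $K^c(\sigma)=0$ for $\sigma=\spa_\C\{e_1+\mathsf i e_2,\,e_3-\varepsilon\mathsf i e_4\}$ (for $\varepsilon=-1$ this is $\sigma_F$; for $\varepsilon=+1$ it is $\sigma_F$ with $e_4$ replaced by $-e_4$). A rotation $R_\theta$ on $V_1$ multiplies $e_1+\mathsf i e_2$ by a unimodular complex scalar. A rotation $R_\varphi$ on $V_2$ does the same to $e_3-\varepsilon\mathsf i e_4$. Hence $\sigma$ is unchanged, so \eqref{eq1} holds for the rotated frame with the same $\varepsilon$, and everything above applies to it.

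Finally, to obtain \eqref{perp}, rotate only in $V_2$. This gives $\accentset{\sim}{\alpha}_{13}=\cos\varphi\,\alpha_{13}+\sin\varphi\,\alpha_{14}$ and $\accentset{\sim}{\alpha}_{14}=-\sin\varphi\,\alpha_{13}+\cos\varphi\,\alpha_{14}$, so
$$
\<\accentset{\sim}{\alpha}_{13},\accentset{\sim}{\alpha}_{14}\>=\cos2\varphi\,\<\alpha_{13},\alpha_{14}\>+\tfrac12\sin2\varphi\,\big(\|\alpha_{14}\|^2-\|\alpha_{13}\|^2\big).
$$
As $2\varphi$ runs over a half-turn this expression changes sign, so some $\varphi$ makes it vanish.
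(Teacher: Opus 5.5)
Your proof is correct. Part $(i)$ and the first half of $(ii)$ (equality in \eqref{1}, conditions \eqref{c1}--\eqref{c5}, and the principal normal $\delta$ for $n\geq5$) follow the paper exactly, via Lemma \ref{algc}. You depart from the paper at two points.

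\emph{The identity \eqref{c22}.} The paper uses \eqref{c1}, \eqref{c2} and, for $n\geq5$, the inclusion $V_F^\perp\subset E_\delta(x)$ to write $S$ and $n^2H^2$ in terms of $\a_{11},\a_{44},\a_{13},\a_{14}$. It then substitutes these into \eqref{1}, which now holds with equality. You instead expand \eqref{eq1} with the Gauss equation and use the ambient identities \eqref{c3}, \eqref{c4}. Your computation stays inside $V_F$ and needs neither \eqref{c2} nor any information about $\a$ on $V_F^\perp$. The paper's route avoids \eqref{c3}--\eqref{c4}.

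Your computation also checks the paper's displayed formula for $S$. The entries $\a_{13},\a_{14},\a_{23},\a_{24}$ and their transposes contribute $4(\|\a_{13}\|^2+\|\a_{14}\|^2)$, not $2(\|\a_{13}\|^2+\|\a_{14}\|^2)$ as printed. Only with the coefficient $4$ does the paper's route yield \eqref{c22} as stated, and your Gauss-equation computation confirms that \eqref{c22} itself is right.

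\emph{Rotation invariance.} The paper transforms the $\a_{ij}$ explicitly via \eqref{c1} and asserts that \eqref{c3}--\eqref{c5} persist by direct computation. You observe instead that the rotations only multiply $e_1+\mathsf i e_2$ and $e_3-\varepsilon\mathsf i e_4$ by unimodular scalars. The complex plane is therefore unchanged, and the rotated frame again satisfies \eqref{eq1} with the same $\varepsilon$. This is a stronger statement. It lets every conclusion already proved for an arbitrary frame satisfying \eqref{eq1} transfer with no further calculation, including the principal-normal statement.

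For \eqref{perp}, rotating only in $V_2$ suffices, since the paper's formulas depend only on $\theta-\varepsilon\varphi$. Your direct formula for $\<\accentset{\sim}{\a}_{13},\accentset{\sim}{\a}_{14}\>$ together with the sign-change argument is correct.
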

\proof 
\noindent $(i)$ 
That $M^n$ has nonnegative 
isotropic curvature at $x$ follows from part $(i)$ of 
Lemma \ref{algc} together with \eqref{1}. 
\vspace{1ex}

\noindent $(ii)$ Suppose now that $M^n$ does not 
have positive isotropic curvature at $x$, and let 
$\left\{e_i\right\}_{i=1}^4\subset T_xM$ be an orthonormal 
$4$-frame satisfying \eqref{eq1}. 
That equality holds in \eqref{1} at $x$ follows 
directly from \eqref{ineq1}. Moreover, part $(ii)$ 
of Lemma \ref{algc} implies that conditions 
\eqref{c1}-\eqref{c5} hold for this $4$-frame and 
for any orthonormal basis $\{e_i\}_{i=5}^n$ 
of the orthogonal complement of 
$V_F$ in $T_xM$. 
Furthermore, the normal vector 
$\delta=n\mathcal H_f/(n-2)$ 
is a principal normal with 
$V_F^\perp\subset E_\delta(x)$ 
when $n\geq5$.

By \eqref{c1} and \eqref{c2}, we have
\begin{align*}
S=2\|\a_{13}\|^2+2\|\a_{14}&\|^2+(n-2)(\|\a_{11}\|^2+\|\a_{44}\|^2)
+2(n-4)\<\a_{11},\a_{44}\>,\\
n^2H^2&=(n-2)^2(\|\a_{11}\|^2+\|\a_{44}\|^2+2\<\a_{11},\a_{44}\>).
\end{align*}
Then \eqref{c22} follows form \eqref{1}, which now holds 
as equality, together with the two equations above.

Consider now the rotated orthonormal 
$4$-frame $\{\accentset{\sim}{e}_i\}_{i=1}^4$, 
as in the statement of the proposition. 
For simplicity, set
$\accentset{\sim}{\a}_{ij}=
\a_f(\accentset{\sim}{e}_i,\accentset{\sim}{e}_j)$. 
Then, using \eqref{c1}, we obtain 
\begin{align*}
&\accentset{\sim}{\a}_{ii}=\a_{ii},\;1\leq i\leq4,\;\,{\text{and}}
\;\;\accentset{\sim}{\a}_{12}=\tilde\a_{34}=0,\\
&\accentset{\sim}{\a}_{13}=\cos(\theta-\varepsilon\varphi)\a_{13}
+\varepsilon\sin(\theta-\varepsilon\varphi)\a_{14},\\
&\accentset{\sim}{\a}_{14}=-\varepsilon\sin(\theta-\varepsilon\varphi)\a_{13}
+\cos(\theta-\varepsilon\varphi)\a_{14},\\
&\accentset{\sim}{\a}_{23}=\sin(\theta-\varepsilon\varphi)\a_{13}
-\varepsilon\cos(\theta-\varepsilon\varphi)\a_{14},\\
&\accentset{\sim}{\a}_{24}=\varepsilon\cos(\theta-\varepsilon\varphi)\a_{13}
+\sin(\theta-\varepsilon\varphi)\a_{14}.
\end{align*}
Hence, $\accentset{\sim}{\a}_{23}=-\varepsilon\accentset{\sim}{\a}_{14}$ 
and $\accentset{\sim}{\a}_{24}=\varepsilon\accentset{\sim}{\a}_{13}$. 
Thus conditions \eqref{c1}, \eqref{c2} and \eqref{c22} also 
hold for the rotated $4$-frame. Moreover, using the algebraic 
properties of the curvature tensor, direct computations show 
that \eqref{c3}-\eqref{c5} remain valid. 

Suppose now that the frame $\{e_i\}_{i=1}^4$ does 
not satisfy \eqref{perp}. We claim that the angles $\theta$ 
and $\varphi$ can be chosen so that \eqref{perp} is satisfied 
for the frame $\{\accentset{\sim}{e}_i\}_{i=1}^4$. 
Indeed, straightforward computations yield 
$$
\<\accentset{\sim}{\a}_{13},\accentset{\sim}{\a}_{14}\>
=\<\a_{13},\a_{14}\>\cos2(\theta-\varepsilon\varphi)
-\frac{\varepsilon}{2}(\|\a_{13}\|^2-\|\a_{14}\|^2)\sin2(\theta-\varepsilon\varphi).
$$
Thus, we may chose angles $\theta$ and $\varphi$ such that 
\eqref{perp} holds for the rotated $4$-frame. 
%This completes the proof of the proposition.
\qed

\subsection{Isometric immersions of Riemannian products}

Let $f\colon M^n\to\accentset{\sim}{M}^{n+m}$, $n\geq4$, be an 
isometric immersion of a Riemannian manifold $M^n$ 
that is a Riemannian product 
$$
(N^{n_1},g_1)\times\cdots\times (N^{n_r},g_r),\quad r\geq2. 
$$
We fix a point $\bar x=(\bar x_1,\dots,\bar x_r)\in M^n$. 
For $1\leq i\leq r$, let $\sigma_i\colon N_i\to M^n$ 
denote the inclusion of $N_i$ into $M^n$, defined by 
$$
\sigma_i(x_i)=(\bar x_1,\dots,x_i,\dots,\bar x_r), \quad x_i\in N_i.
$$
Then $\sigma_i$ is totally geodesic. 

We consider the isometric immersions 
$$
f_i=f\circ\sigma_i\colon N_i\to\accentset{\sim}{M}^{n+m}, \quad 1\leq i\leq r.
$$ 
Since each $\sigma_i$ is totally geodesic, it follows that 
\be\label{ai}
\a_{f_i}(X_i,Y_i)=\a_f(\sigma_i{_*}X_i,\sigma_i{_*}Y_i) 
\quad\text{for all }\; X_i,Y_i\in TN_i.
\ee
Let $H_i$ and $\Phi_i=\a_{f_i}-g_i\mathcal H_{f_i}$ denote the 
mean curvature and the traceless part of the second fundamental 
form of the immersion $f_i$, respectively. 
We need the following auxiliary result. 

\begin{lemma}\label{lemmaprod}
Let $f\colon M^n\to\accentset{\sim}{M}^{n+m}$, $n\geq4$, be an 
isometric immersion of a Riemannian product as above. 
Suppose that $f$ satisfies the inequality \eqref{1} 
at a point $x=(x_1,\dots,x_r)\in M^n$, and consider 
an orthonormal basis $\{e^i_k\}_{k=1}^{n_i}$ of 
$T_{x_i}N_i$, $1\leq i\leq r$. Then the following 
inequality holds at $x$:
\begin{align}\label{pro}
\sum_{i=1}^r\big(\|\Phi_i\|^2+n_iH_i^2(1&-\frac{n_i}{n-2})\big)+\frac{2(n-3)}{n-2}\sum_{1\leq i<j\leq r}\sum_{k=1}^{n_i}
\sum_{\ell=1}^{n_j}\|\a_f(\sigma_i{_*}e^i_k,\sigma_j{_*}e^j_\ell) \|^2\nonumber\\
&\leq-\frac{4}{3}\accentset{\sim}{K}^f_{\max}+
\frac{1}{n-2}\Big(\sum_{i=1}^rn_i^2-n^2
+\frac{16}{3}(n-2)\Big)\accentset{\sim}{K}^f_{\min}.
\end{align} 
\end{lemma}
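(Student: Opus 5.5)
The plan is to rewrite both $S$ and $n^2H^2$ in terms of the block decomposition $T_xM=\bigoplus_i T_{x_i}N_i$. The mixed terms are then handled with the Gauss equation, using that mixed sectional curvatures of a Riemannian product vanish. Throughout, write $\a=\a_f$ and suppress the $\sigma_{i*}$. Set
$$
M_{ij}=\sum_{k=1}^{n_i}\sum_{\ell=1}^{n_j}\|\a(e^i_k,e^j_\ell)\|^2 .
$$

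\emph{Step 1: decompose $S$.} Split the orthonormal basis $\bigcup_i\{e^i_k\}$ of $T_xM$ into diagonal blocks and off-diagonal blocks. By \eqref{ai}, the diagonal blocks are the second fundamental forms of the $f_i$. This gives
$$
S=\sum_i\|\a_{f_i}\|^2+2\sum_{i<j}M_{ij}
=\sum_i\big(\|\Phi_i\|^2+n_iH_i^2\big)+2\sum_{i<j}M_{ij}.
$$

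\emph{Step 2: decompose $n^2H^2$.} Since $n\mathcal H_f=\sum_i n_i\mathcal H_{f_i}$, we have
$$
n^2H^2=\sum_i n_i^2H_i^2+2\sum_{i<j}n_in_j\<\mathcal H_{f_i},\mathcal H_{f_j}\>.
$$
For $X=e^i_k$ and $Y=e^j_\ell$ with $i\neq j$, the plane they span is mixed, so its sectional curvature in the product vanishes. The Gauss equation then gives
$$
0=\accentset{\sim}{K}(f_*X\wedge f_*Y)+\<\a(X,X),\a(Y,Y)\>-\|\a(X,Y)\|^2 .
$$
Summing over $k$ and $\ell$ yields
$$
n_in_j\<\mathcal H_{f_i},\mathcal H_{f_j}\>=M_{ij}-\sum_{k,\ell}\accentset{\sim}{K}(f_*e^i_k\wedge f_*e^j_\ell).
$$
Each ambient curvature term is at least $\accentset{\sim}{K}^f_{\min}$, so
$$
n^2H^2\le \sum_i n_i^2H_i^2+2\sum_{i<j}M_{ij}-2\sum_{i<j}n_in_j\accentset{\sim}{K}^f_{\min}.
$$

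\emph{Step 3: combine.} Substitute Steps 1 and 2 into \eqref{1}, written as $S-\frac{n^2H^2}{n-2}\le\frac{16}{3}\accentset{\sim}{K}^f_{\min}-\frac43\accentset{\sim}{K}^f_{\max}$. The left-hand side is then bounded below by
$$
\sum_i\Big(\|\Phi_i\|^2+n_iH_i^2\big(1-\tfrac{n_i}{n-2}\big)\Big)+\tfrac{2(n-3)}{n-2}\sum_{i<j}M_{ij}+\tfrac{2}{n-2}\sum_{i<j}n_in_j\accentset{\sim}{K}^f_{\min}.
$$
Use the identity $2\sum_{i<j}n_in_j=n^2-\sum_i n_i^2$ and move the $\accentset{\sim}{K}^f_{\min}$ term to the right. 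The right-hand side becomes exactly that of \eqref{pro}.

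The only delicate point is the direction of the inequalities: $n^2H^2$ enters \eqref{1} with a negative sign. An \emph{upper} bound on $n^2H^2$, which comes from the \emph{lower} bound $\accentset{\sim}{K}\ge\accentset{\sim}{K}^f_{\min}$, is therefore exactly what is needed. Everything else is bookkeeping.
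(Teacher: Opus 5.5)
Your proposal is correct and follows essentially the same route as the paper: decompose $S$ and $n^2H^2$ along the product blocks, use the Gauss equation on the mixed planes (whose sectional curvature vanishes) to express $n_in_j\langle\mathcal H_{f_i},\mathcal H_{f_j}\rangle$, bound the ambient curvature terms below by $\accentset{\sim}{K}^f_{\min}$, and use $2\sum_{i<j}n_in_j=n^2-\sum_i n_i^2$. The only cosmetic difference is that you apply the $\accentset{\sim}{K}^f_{\min}$ estimate before substituting into \eqref{1}, whereas the paper first rewrites \eqref{1} as an identity-based inequality and estimates at the end.
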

\proof
We consider the orthonormal basis 
$
\left\{\sigma_i{_*}e^i_k:1\leq i\leq r, 1\leq k\leq n_i\right\}
$ 
of $T_xM$. Using \eqref{ai}, we obtain 
\begin{align}\label{S}
S=&\sum_{1\leq i,j\leq r}\sum_{k=1}^{n_i}
\sum_{\ell=1}^{n_j}\|\a_f(\sigma_i{_*}e^i_k,\sigma_j{_*}e^j_\ell) \|^2
 \nonumber\\
=&\sum_{i=1}^rS_i+2\sum_{1\leq i<j\leq r}\sum_{k=1}^{n_i}
\sum_{\ell=1}^{n_j}\|\a_f(\sigma_i{_*}e^i_k,\sigma_j{_*}e^j_\ell) \|^2,
\end{align} 
where $S_i$ denotes the squared length of the second fundamental 
form of the immersion $f_i$. Furthermore, we have
$
n\mathcal H_f=\sum_{i=1}^rn_i\mathcal H_{f_i},
$
and hence 
\be\label{H}
n^2H^2=\sum_{i=1}^rn^2_iH^2_i+
2\sum_{1\leq i<j\leq r}n_in_j\<\mathcal H_{f_i}, \mathcal H_{f_j}\>.
\ee 
Since 
$
\|\Phi_i\|^2=S_i-n_iH^2_i,
$
using \eqref{S} and \eqref{H}, inequality \eqref{1} 
can be equivalently written as 
\begin{align}\label{*}
\sum_{i=1}^r\big(\|\Phi_i\|^2&+n_iH_i^2(1-\frac{n_i}{n-2})\big)
+2\sum_{1\leq i<j\leq r}\sum_{k=1}^{n_i}
\sum_{\ell=1}^{n_j}\|\a_f(\sigma_i{_*}e^i_k,\sigma_j{_*}e^j_\ell) \|^2\nonumber\\
&\leq\frac{16}{3}\big(\accentset{\sim}{K}^f_{\min}-\frac{1}{4}\accentset{\sim}{K}^f_{\max}\big)+
\frac{2}{n-2}\sum_{1\leq i<j\leq r}n_in_j\<\mathcal H_{f_i}, \mathcal H_{f_j}\>.
\end{align} 

Using \eqref{ai}, the Gauss equation for the immersion $f$ implies that the 
sectional curvature for the two-plane spanned by $e^i_k$ and $e^j_\ell$ is 
\begin{align*}
K(e^i_k\wedge e^j_\ell)=\big\langle\accentset{\sim}{R}^f(&\sigma_i{_*}e^i_k,\sigma_j{_*}e^j_\ell)\sigma_j{_*}e^j_\ell,\sigma_i{_*}e^i_k\big\rangle
+\<\a_{f_i}(e^i_k,e^i_k),\a_{f_j}(e^j_\ell,e^j_\ell)\>\\
&-\|\a_f(\sigma_i{_*}e^i_k,\sigma_j{_*}e^j_\ell) \|^2,
\end{align*}
for any $1\leq i\neq j\leq r$, $1\leq k\leq n_i$ and $1\leq \ell\leq n_j$.
Since $K(e^i_k\wedge e^j_\ell)=0$, it follows that 
\begin{align*}
n_in_j\<\mathcal H_{f_i}, \mathcal H_{f_j}\>=
-&\sum_{k=1}^{n_i}\sum_{\ell=1}^{n_j}\big\langle\accentset{\sim}{R}^f(\sigma_i{_*}e^i_k,\sigma_j{_*}e^j_\ell)\sigma_j{_*}e^j_\ell,\sigma_i{_*}e^i_k\big\rangle\\
&+\sum_{k=1}^{n_i}
\sum_{\ell=1}^{n_j}\|\a_f(\sigma_i{_*}e^i_k,\sigma_j{_*}e^j_\ell) \|^2, 
\end{align*}
for any $1\leq i\neq j\leq r$. 

Substituting this expression into \eqref{*} and estimating the curvature term by 
$\widetilde{K}^f_{\min}$ yields the desired inequality \eqref{pro}.
\qed
\vspace{1ex}

The next result characterizes the isometric immersions of Riemannian 
products satisfying the pinching condition \eqref{1}, where each factor 
is assumed to have dimension at least two.

\begin{proposition}\label{propprod}
Let $f\colon M^n\to\accentset{\sim}{M}^{n+m}$, $n\geq4$, be an 
isometric immersion of an $n$-dimensional Riemannian 
manifold $M^n$ that is isometric to a Riemannian product 
$$
(N^{n_1},g_1)\times\cdots\times (N^{n_r},g_r)\quad \text{with}\quad r\geq2 
\quad \text{and}\quad n_i\geq2. 
$$
If inequality \eqref{1} holds at every point, then one of the 
following alternatives occurs:
\vspace{1ex}

\noindent $(i)$ If $n_i<n-2$ for all $1\leq i\leq r$, 
then $\accentset{\sim}{K}^f_{\min}\leq0$ at every point. 
Moreover, if $\accentset{\sim}{K}^f_{\min}\geq0$ 
everywhere, then $f$ is totally geodesic, the 
manifold $M^n$ is flat, 
$
\accentset{\sim}{K}^f_{\min}=\accentset{\sim}{K}^f_{\max}=0
$ 
and the inequality \eqref{1} holds as an equality.
\vspace{1ex}

\noindent $(ii)$ If $n\geq5$, $n_1=2$ and $n_2=n-2$, 
then $TN_1$ is contained in the relative 
nullity distribution $\mathcal D_f$ of $f$, and the vector field 
$\delta=n\mathcal H_f/(n-2)$ is a Dupin principal 
normal such that $TN_2\subset E_\delta$. 
Moreover, at every point, 
$
\accentset{\sim}{K}^f_{\min}=\accentset{\sim}{K}^f_{\max}=0,
$
the factor $N_1$ is flat and $N_2$ has 
constant nonnegative sectional curvature.
\vspace{1ex}

\noindent $(iii)$ If $n=4$ and $n_1=n_2=2$, 
then at each point there exist two Dupin 
principal normals $\delta_1$ and $\delta_2$
such that $TN_i\subset E_{\delta_i},i=1,2$. 
Moreover, at every point, 
$$
\accentset{\sim}{K}^f_{\min}=\accentset{\sim}{K}^f_{\max} 
\quad \text{and}\quad 
\<\delta_1,\delta_2\>=-\accentset{\sim}{K}^f_{\min}.
$$

Conversely, if the immersion $f$ is as described in either 
case $(ii)$ or $(iii)$, then inequality \eqref{1} holds as an 
equality at every point.
\end{proposition}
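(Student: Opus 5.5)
The plan is to start from inequality \eqref{pro} of Lemma \ref{lemmaprod}, apply it pointwise, and study the sign of each term on its left-hand side according to the dimensions $n_i$.

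\textbf{Case $(i)$: all $n_i<n-2$.} The coefficients $1-n_i/(n-2)$ are positive, so the left side of \eqref{pro} is nonnegative. On the right side, write $\accentset{\sim}{K}^f_{\max}\geq\accentset{\sim}{K}^f_{\min}$. With $\sum n_i=n$, $r\geq2$ and $2\leq n_i\leq n-3$, one has $\sum_i n_i^2\leq (n-3)^2+9$ when $r=2$, and smaller values otherwise. Hence the coefficient of $\accentset{\sim}{K}^f_{\min}$ after absorbing $-\frac43\accentset{\sim}{K}^f_{\max}$, namely
$$
\frac{1}{n-2}\Big(\sum n_i^2-n^2+4(n-2)\Big),
$$
is negative. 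For example, with $r=2$ it equals $(4(n-2)-2n_1n_2)/(n-2)$, and $n_1n_2\geq 2(n-2)$ gives a value $\leq0$. Strict negativity will need a sharper count, or else the equality case is handled separately.

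Therefore, if $\accentset{\sim}{K}^f_{\min}>0$ the right side is negative, which is a contradiction. So $\accentset{\sim}{K}^f_{\min}\leq 0$ when the coefficient is strictly negative. The borderline $n_1n_2=2(n-2)$ forces $n_1=2$ or $n_2=2$ with the other equal to $n-2$, which is excluded here.

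If $\accentset{\sim}{K}^f_{\min}\geq0$, then $\accentset{\sim}{K}^f_{\min}=0$. The right side is then $-\frac43\accentset{\sim}{K}^f_{\max}\leq0$, so every term vanishes:
\begin{itemize}
\item $\accentset{\sim}{K}^f_{\max}=0$;
\item $\Phi_i=0$ and $H_i=0$, so each $f_i$ is totally geodesic;
\item the mixed terms $\a_f(\sigma_{i*}\cdot,\sigma_{j*}\cdot)=0$.
\end{itemize}
So $f$ is totally geodesic, the Gauss equation gives flatness, and equality holds in \eqref{1} by direct substitution.

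\textbf{Case $(ii)$: $n_1=2$, $n_2=n-2$.} Here the $i=2$ term has coefficient $0$ on $H_2^2$. The $i=1$ term is $\|\Phi_1\|^2+2H_1^2\frac{n-4}{n-2}\geq0$. The right side becomes $-\frac43(\accentset{\sim}{K}^f_{\max}-\accentset{\sim}{K}^f_{\min})\leq0$, since
$$
\sum n_i^2-n^2=-4(n-2).
$$
Thus everything vanishes:
\begin{itemize}
\item $\accentset{\sim}{K}^f_{\max}=\accentset{\sim}{K}^f_{\min}$;
\item $\Phi_1=0$ and $H_1=0$ (using $n\geq5$), so $TN_1\subset$ relative nullity together with vanishing mixed terms;
\item $\Phi_2=0$, so $f_2$ is umbilical with normal $\mathcal H_{f_2}=n\mathcal H_f/(n-2)=\delta$, giving $TN_2\subset E_\delta$.
\end{itemize}

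Since $N_1$ is flat, the Gauss equation for mixed planes gives $0=K(e^1\wedge e^2)=\accentset{\sim}{K}^f_{\min}$, so both ambient functions vanish. The Gauss equation on $N_2$ then gives $K_{N_2}=\|\delta\|^2$, a pointwise constant. Schur's lemma applies since $n-2\geq3$, and the product structure makes $\|\delta\|$ depend only on $N_2$, so $N_2$ has constant curvature $\geq0$. The curvature of $N_1$ is $0+0$ from Gauss, since its second fundamental form vanishes.

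\textbf{Case $(iii)$: $n=4$, $n_1=n_2=2$.} The $H_i$ coefficients vanish and the right side again equals $-\frac43(\accentset{\sim}{K}^f_{\max}-\accentset{\sim}{K}^f_{\min})$. This yields $\Phi_i=0$, no mixed terms, and $\accentset{\sim}{K}^f_{\max}=\accentset{\sim}{K}^f_{\min}$. Setting $\delta_i=\mathcal H_{f_i}$ gives $TN_i\subset E_{\delta_i}$, and the mixed Gauss equation gives $\<\delta_1,\delta_2\>=-\accentset{\sim}{K}^f_{\min}$.

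\textbf{Converse.} The converse follows by reversing the computation: all the inequalities used become equalities, and since the ambient curvature is isotropic on $f_*TM$, the curvature estimate in Lemma \ref{lemmaprod} is exact.

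The main obstacle is the coefficient sign bookkeeping in case $(i)$, which must be verified carefully for general $r$.
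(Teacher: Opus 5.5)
Your proof is correct and follows the paper's argument. You apply \eqref{pro} pointwise and use $\accentset{\sim}{K}^f_{\max}\geq\accentset{\sim}{K}^f_{\min}$ to reduce case $(i)$ to the sign of $\sum_i n_i^2-n^2+4(n-2)$, which is exactly the paper's \eqref{min}. In cases $(ii)$ and $(iii)$ you note that the right-hand side collapses to $\frac{4}{3}(\accentset{\sim}{K}^f_{\min}-\accentset{\sim}{K}^f_{\max})\leq0$, then finish with the Gauss equation on mixed planes and Schur's theorem. Two points need tidying.

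\emph{Case $(i)$ is not an obstacle.} The bound you already wrote settles it. Case $(i)$ forces $n\geq6$. Your bound $\sum_i n_i^2\leq(n-3)^2+9$ (for $r\geq3$ even $\sum_i n_i^2\leq(n-4)^2+8$) then gives $\sum_i n_i^2-n^2+4(n-2)\leq-2(n-5)<0$. So strict negativity holds for every $r$, and no separate borderline analysis is needed. The paper instead singles out one factor and uses $n_1^2+(n-n_1)^2-(n^2-4n+8)=2(n_1-2)(n_1-n+2)$.

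\emph{A circular step in case $(ii)$.} You justify $K(e^1\wedge e^2)=0$, for $e^1\in TN_1$ and $e^2\in TN_2$, by saying $N_1$ is flat. That is not the reason: it holds because $M^n$ is a Riemannian product, the same fact used in Lemma \ref{lemmaprod}. Flatness of $N_1$ is what you deduce afterwards, from $\accentset{\sim}{K}^f_{\min}=0$ and $\alpha_f(TN_1,TM)=0$. As written the step is circular, but citing the product structure fixes it immediately.
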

\proof
Suppose that $f$ satisfies the inequality \eqref{1} at
every point. Clearly, $n_i\leq n-2$ for all $1\leq i\leq r$. 
Hence, the left-hand side of \eqref{pro} is nonnegative and 
consequently, 
$$
\accentset{\sim}{K}^f_{\max}\leq
\frac{3}{4(n-2)}\Big(\sum_{i=1}^rn_i^2-n^2
+\frac{16}{3}(n-2)\Big)\accentset{\sim}{K}^f_{\min}. 
$$
This implies
\be\label{min}
\accentset{\sim}{K}^f_{\min}\big(\sum_{i=1}^rn^2_i-n^2+4n-8\big)\geq0. 
\ee

\noindent $(i)$ Suppose that $n_i<n-2$ for all $1\leq i\leq r$. 
Set $n_1=n-k$, where $2<k<n-2$. Then 
$$ 
\sum_{i=1}^rn^2_i=(n-k)^2+\sum_{i=2}^rn^2_i
<(n-k)^2+\big(\sum_{i=2}^rn_i\big)^2
=(n-k)^2+k^2. 
$$
Hence, 
$$
\sum_{i=1}^rn^2_i<n^2-4n+8, 
$$
and \eqref{min} yields $\accentset{\sim}{K}^f_{\min}\leq0$. 

Suppose now that $\accentset{\sim}{K}^f_{\min}\geq0$ at 
every point. Hence, $\accentset{\sim}{K}^f_{\min}=0$, 
and \eqref{pro} simplifies to 
\begin{align*}
\sum_{i=1}^r\big(\|\Phi_i\|^2+n_iH_i^2(1-\frac{n_i}{n-2})\big)&+\frac{2(n-3)}{n-2}\sum_{1\leq i<j\leq r}\sum_{k=1}^{n_i}
\sum_{\ell=1}^{n_j}\|\a_f(\sigma_i{_*}e^i_k,\sigma_j{_*}e^j_\ell) \|^2\\
&\leq-\frac{4}{3}\accentset{\sim}{K}^f_{\max}\leq0,\nonumber
\end{align*} 
where $\{e^i_k\}_{k=1}^{n_i}$ is an orthonormal frame of 
$TN_i$, $1\leq i\leq r$. The inequality above implies that 
$
\accentset{\sim}{K}^f_{\max}=0. 
$
Moreover, each of the immersions $f_i$, $1\leq i\leq r$, 
is totally geodesic, and the second fundamental form 
of $f$ is adapted to the product net $\{TN_i\}_{i=1}^r$, 
that is 
$$
\a_f(TN_i,TN_j)=0\quad\text{for all }\;1\leq i\neq j\leq r. 
$$ 
Therefore, by \eqref{ai}, the immersion $f$ is totally geodesic. 
The Gauss equation then implies that $M^n$ is flat. Clearly, equality 
holds in \eqref{1} at every point. 

\vspace{1ex}
\noindent $(ii)$ Suppose now that $n\geq5$, $n_1=2$ and $n_2=n-2$. 
Then \eqref{pro} simplifies to 
\begin{align*}
\sum_{i=1}^2\|\Phi_i\|^2+\frac{2(n-4)}{n-2}H_1^2&
+\frac{2(n-3)}{n-2}\sum_{k=1}^{n_1}
\sum_{\ell=1}^{n_2}\|\a_f(\sigma_1{_*}e^1_k,\sigma_2{_*}e^2_\ell) \|^2\\
&\leq\frac{4}{3}(\accentset{\sim}{K}^f_{\min}-\accentset{\sim}{K}^f_{\max})\leq0.\nonumber
\end{align*} 
The inequality above implies that 
$
\accentset{\sim}{K}^f_{\min}=\accentset{\sim}{K}^f_{\max},
$ 
that the immersion $f_1$ is totally 
geodesic, that $f_2$ is totally umbilical, 
and that the second fundamental from of $f$ is adapted 
to the product net $\{TN_1,TN_2\}$. Thus, by \eqref{ai}, $TN_1$ is 
contained in the relative nullity distribution $\mathcal D_f$ of $f$, 
and the vector field 
$\delta=n\mathcal H_f/(n-2)$
is a Dupin principal normal such that 
$TN_2\subset E_\delta$. 

Now take, at any 
point $x\in M^n$, a two-plane 
$\sigma\subset T_xM$ spanned by orthonormal 
vectors $v_1$ and $v_2$. From the Gauss equation, 
the sectional curvature of $M^n$ along $\sigma$ is 
$$
K(\sigma)=\accentset{\sim}{K}^f_{\min}(x)
+\<\a_f(v_1,v_1),\a_f(v_2,v_2)\>-\|\a_f(v_1,v_2)\|^2.
$$
If $v_1\in TN_1$ and $v_2\in TN_2$ then, 
since $M^n$ is the Riemannian product, this 
sectional curvature must be zero. Therefore,
$
\accentset{\sim}{K}^f_{\min}=\accentset{\sim}{K}^f_{\max}=0. 
$
Furthermore it is clear that \eqref{1} holds as an 
equality at every point. 
If $v_1,v_2\in TN_1$, the Gauss equation then 
implies that $N_1$ is flat.
Finally, if $v_1,v_2\in TN_2$ 
the Gauss equation implies that 
$K(\sigma)=\|\delta(x)\|^2$. 
Then, by Schur's theorem, $N_2$ has constant 
nonnegative sectional curvature. 

\vspace{1ex}
\noindent $(iii)$ Suppose now that $n=4$ and $n_1=n_2=2$. 
Then \eqref{pro} simplifies to 
\begin{align*}
\sum_{i=1}^2\|\Phi_i\|^2
+2\sum_{k=1}^{n_1}
\sum_{\ell=1}^{n_2}\|\a_f(\sigma_1{_*}e^1_k,\sigma_2{_*}e^2_\ell) \|^2
\leq\frac{8}{3}(\accentset{\sim}{K}^f_{\min}-\accentset{\sim}{K}^f_{\max})\leq0.\nonumber
\end{align*} 
Thus, 
$
\accentset{\sim}{K}^f_{\min}=\accentset{\sim}{K}^f_{\max},
$ 
both immersions $f_1$ and $f_2$ are totally umbilical, 
and the second fundamental from of $f$ is adapted 
to the net $\{TN_1,TN_2\}$. Hence, by \eqref{ai}, 
the vector fields 
$
\delta_i=\mathcal{H}_{f_i},\, i=1,2,
$ 
are Dupin principal normals of $f$ satisfying 
$TN_i\subset E_{\delta_i}$. 

Take, at any point $x\in M^n$, a two-plane 
$\sigma\subset T_xM$ spanned by unit 
vectors $v_1\in TN_1$ and $v_2\in TN_2$.
From the Gauss equation the 
sectional curvature of $M^n$ along $\sigma$ 
is given by
$$
K(\sigma)=\accentset{\sim}{K}^f_{\min}(x)
+\<\delta_1,\delta_2\>.
$$
Since $M^n$ is the Riemannian product of 
$(N_1,g_1)$ and $(N_2,g_2)$, we 
must have $K(\sigma)=0$. Therefore, 
$
\<\delta_1,\delta_2\>=-\accentset{\sim}{K}^f_{\min} 
$
at every point. 

Conversely, suppose that the submanifold 
$f$ is as described in cases $(ii)$ and $(iii)$ of 
the proposition. Then it is straightforward to 
verify that inequality \eqref{1} holds as an 
equality at every point.\qed

\subsection{The Bochner-Weitzenb\"ock operator}

Let $(M^n,\<\cdot,\cdot\>)$ be an oriented Riemannian 
manifold of dimension $n\geq4$ with Levi-Civit\'a connection 
$\n$ and curvature tensor $R$. The Ricci tensor of 
$(M^n,\<\cdot,\cdot\>)$ is defined by
$$
\Ric(X,Y)=\sum_{i=1}^n\<R(X,E_i)E_i,Y\>,\quad X,Y\in\mathcal X(M),
$$
where $\{E_i\}_{i=1}^n$ is a local orthonormal frame.

At any point $x\in M$, we consider the {\emph 
{Bochner-Weitzenb\"ock operator} $\B^{[2]}$ as an 
endomorphism of the space of 2-vectors 
$\Lambda^2T_xM$ at $x$, given by (see \cite{sea})
\begin{align}\label{WB}
\<\<\B^{[2]}(v_1\wedge v_2), w_1\wedge w_2\>\>
=&\;\Ric(v_1,w_1)\<v_2,w_2\>+\Ric(v_2,w_2)\<v_1,w_1\>\nonumber\\
&-\Ric(v_1,w_2)\<v_2,w_1\>-\Ric(v_2,w_1)\<v_1,w_2\>\nonumber\\
&-2\<R(v_1,v_2)w_2,w_1\>,
\end{align}
and then extend it linearly to all of $\Lambda^2T_xM$.
Here $\<\<\cdot,\cdot\>\>$ stands for the inner product of 
$\Lambda^2T_xM$ defined by 
$$
\<\<v_1\wedge v_2,w_1\wedge w_2\>\>=\det(\<v_i,w_j\>).
$$
The Bochner-Weitzenb\"ock operator is self-adjoint. 
If $Z\in\Lambda^2T_xM$, the dual 2-form $\omega$ is 
defined by 
$$
\omega(v,w)=\<\<Z,v\wedge w\>\>, 
$$ 
and we may regard $Z$ as the skew-symmetric 
endomorphism of the tangent space at $x$ via 
$$
\<Z(v),w\>=\<\<Z,v\wedge w\>\>. 
$$

Clearly, $\B^{[2]}$ can also be viewed as an endomorphism 
of the bundle $\Omega^2(M)$ of 2-forms of the manifold via 
the inner product $\<\<\cdot,\cdot\>\>$. If $\omega$ is a 
2-form, then $\B^{[2]}\omega$ is given by 
$$
\B^{[2]}\omega(X_1,X_2)=\omega(\Ric(X_1),X_2)
+\omega(X_1,\Ric(X_2))-\sum_i\omega(R(X_1,X_2)E_i,E_i).
$$
Then the Bochner-Weitzenb\"ock operator acts on 
$\Lambda^2TM$ by 
$$
\omega(\B^{[2]}(X_1\wedge X_2))
=\B^{[2]}\omega\,(X_1,X_2).
$$ 
Taking $\omega$ to 
be the dual form of $w_1\wedge w_2$ yields \eqref{WB}. 
Throughout the paper, we will generally identify 2-forms 
with 2-vectors.

We recall the Bochner-Weitzenb\"ock formula, which 
can be written as 
$$
\<\Delta\omega,\omega\>=\frac{1}{2}\Delta\|\omega\|^2+\|\n\omega\|^2
+\<\B^{[2]}\omega,\omega\>
$$
for any $\omega\in\Omega^2(M)$. It follows that any 
harmonic 2-form on a compact manifold is parallel, provided 
that the Bochner-Weitzenb\"ock operator is nonnegative. 

It is worth mentioning that for any four-dimensional 
Riemannian manifold $M$, the non-negativity of the 
isotropic curvature at a point $x\in M$ is equivalent 
to the non-negativity of the Bochner-Weitzenb\"ock 
operator $\B^{[2]}$ at $x$ (see, for instance, \cite{MWolf}).

\subsection{Even dimensional submanifolds}
Even-dimensional Riemannian manifolds with nonnegative isotropic curvature are known to have a nonnegative Bochner-Weitzenböck operator $\mathcal{B}^{[2]}$ (see \cite{sea}). In this section, we strengthen this result in a form adapted to our purposes.

Let $M^n,n=2k$, be an even-dimensional Riemannian 
manifold and let $\omega\in\Omega^2(M^n)$ 
be a 2-form. Choose an orthonormal basis 
$\left\{v_1,w_1,\dots,v_k,w_k\right\}$ of the tangent space
at a point $x\in M^n$ such that
\be\label{Lambda}
\omega(v_a,w_b)=\lambda_a\delta_{ab}\quad \text{and}\quad 
\omega(v_a,v_b)=\omega(w_a,v_w)=0\quad \text{for all }\;1\leq a,b\leq k,
\ee
where $\lambda_a$ are real numbers.

\begin{lemma}\label{lemmaeven2}
Let $M^n$, with $n=2k\geq4$, be an even-dimensional Riemannian 
manifold. Suppose that $M^n$ has nonnegative isotropic 
curvature at a point $x\in M^n$. Then the 
following assertions hold:
\vspace{1ex}

\noindent $(i)$ The Bochner-Weitzenb\"ock operator 
$\B^{[2]}$ is nonnegative at $x$. 
\vspace{1ex}

\noindent $(ii)$ Let $\omega\in\Omega^2(M^n)$ 
be a 2-form such that 
$$
\<\B^{[2]}\omega,\omega\>(x)=0.
$$
Choose an orthonormal basis 
$\{v_1,w_1,\dots,v_k,w_k\}$ of the tangent space
$T_xM$ such that \eqref{Lambda} holds. 
Then, for all $1\leq a\neq b\leq k$, the following condition is satisfied at $x$:
\begin{align}\label{even=}
\lambda_a^2\big(\left<R(v_b,v_a)v_a,v_b\right>&
+\left<R(v_b,w_a)w_a,v_b\right> +\left<R(w_b,v_a)v_a,w_b\right> \nonumber\\
&+\left<R(w_b,w_a)w_a,w_b\right> -2\left|\<R(v_b,w_b)v_a,w_a\>\right|\big)=0.
\end{align} 
\end{lemma}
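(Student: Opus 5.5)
We compute $\<\B^{[2]}\omega,\omega\>$ at $x$ explicitly in the adapted basis \eqref{Lambda} and write it as a sum of nonnegative terms, each a multiple of an isotropic curvature expression. Identifying $\omega$ with the $2$-vector $Z=\sum_{a=1}^k\lambda_a\, v_a\wedge w_a$ and expanding \eqref{WB} bilinearly, we get
$$
\<\B^{[2]}\omega,\omega\>=\sum_{a,b}\lambda_a\lambda_b\<\<\B^{[2]}(v_a\wedge w_a),v_b\wedge w_b\>\>.
$$
For $a=b$, \eqref{WB} gives $\Ric(v_a,v_a)+\Ric(w_a,w_a)-2K(v_a\wedge w_a)$. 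Writing each Ricci term as a sum of sectional curvatures over the basis, this diagonal contribution equals $\sum_{b\neq a}\big(K(v_av_b)+K(v_aw_b)+K(w_av_b)+K(w_aw_b)\big)$. For $a\neq b$, the Ricci terms in \eqref{WB} vanish because the vectors are mutually orthogonal, so only $-2\<R(v_a,w_a)w_b,v_b\>$ remains, up to the sign convention.

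Symmetrizing over unordered pairs $\{a,b\}$, we expect
$$
\<\B^{[2]}\omega,\omega\>=\sum_{a<b}\Big[(\lambda_a^2+\lambda_b^2)Q_{ab}+2\lambda_a\lambda_b\,c\,R(v_a,w_a,v_b,w_b)\Big],
$$
where $Q_{ab}=K(v_av_b)+K(v_aw_b)+K(w_av_b)+K(w_aw_b)$ and $c$ is a fixed numerical constant with $|c|=2$. This needs to be checked against the convention $\<\<\mathcal R(X\wedge Y),Z\wedge W\>\>=\<R(X,Y)W,Z\>$.

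Nonnegative isotropic curvature applied to the frames $\{v_b,w_b,v_a,w_a\}$ with $\varepsilon=\pm1$, together with the Micallef--Moore formula for $K^c(\sigma_F)$, gives
$$
Q_{ab}\geq 2\left|\<R(v_b,w_b)v_a,w_a\>\right|.
$$
Using $(\lambda_a^2+\lambda_b^2)\geq 2|\lambda_a\lambda_b|$, each bracket is then bounded below by
$$
(\lambda_a^2+\lambda_b^2)\Big(Q_{ab}-2\left|\<R(v_b,w_b)v_a,w_a\>\right|\Big)\geq0.
$$
This proves (i). I expect the main obstacle to be checking the constants and signs here, namely that the off-diagonal coefficient really matches $2|R_{1234}|$ so that the isotropic inequality applies exactly.

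For (ii), if $\<\B^{[2]}\omega,\omega\>=0$, then every bracket vanishes, since each is nonnegative. So for each pair $a\neq b$ both inequalities in the chain are equalities. In particular
$$
(\lambda_a^2+\lambda_b^2)\Big(Q_{ab}-2\left|\<R(v_b,w_b)v_a,w_a\>\right|\Big)=0.
$$
Multiplying by $\lambda_a^2/(\lambda_a^2+\lambda_b^2)$ when $\lambda_a^2+\lambda_b^2\neq0$, and noting the claim is trivial when $\lambda_a=0$, yields $\lambda_a^2\big(Q_{ab}-2|\<R(v_b,w_b)v_a,w_a\>|\big)=0$. This is exactly \eqref{even=}.
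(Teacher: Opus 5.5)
Your proposal is correct and follows the paper's proof. Expanding $\<\B^{[2]}\omega,\omega\>$ in the adapted basis gives exactly the paper's formula: diagonal terms $\sum_{b\neq a}Q_{ab}$ and off-diagonal terms $2\<R(v_b,w_b)v_a,w_a\>$, so your constant is $c=2$ and the sign is irrelevant. The isotropic bound $Q_{ab}\geq 2|\<R(v_b,w_b)v_a,w_a\>|$ on the frames $\{v_b,w_b,v_a,w_a\}$, together with $2|\lambda_a\lambda_b|\leq\lambda_a^2+\lambda_b^2$, is the same estimate the paper uses, grouped over unordered pairs instead of ordered ones, and your equality argument for (ii) is likewise the paper's.
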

\proof
Choose an orthonormal basis 
$\{e_i\}_{i=1}^n$ of 
$T_xM$ such that $e_{2a-1}=v_a$ and 
$e_{2a}=w_a$ for $1\leq a\leq k$, where 
$\{v_1,w_1,\dots,v_k,w_k\}$ is chosen so that 
\eqref{Lambda} holds. Let $\{\omega_i\}_{i=1}^n$ 
denote the dual basis to $\{e_i\}_{i=1}^n$. Then
$$
\<\B^{[2]}\omega,\omega\>(x)=
\frac{1}{4}\sum_{i,j=1}^n\omega_{ij}\B^{[2]}\omega(e_i,e_j),
$$
where $\omega_{ij}=\omega(e_i,e_j)$. 
On the other hand, 
\begin{align*}
\B^{[2]}\omega(X_1,X_2)&=\sum_{i=1}^n\big((R(e_i,X_1)\omega)(e_i,X_2)
-(R(e_i,X_2)\omega)(e_i,X_1)\big)\\
&=-\sum_{i=1}^n\omega(R(e_i,X_1)e_i,X_2)-\sum_{i=1}^n\omega(e_i,R(e_i,X_1)X_2)\\
&\;+\sum_{i=1}^n\omega(R(e_i,X_2)e_i,X_1)+\sum_{i=1}^n\omega(e_i,R(e_i,X_2)X_1)
\end{align*}
for every $X_1,X_2\in T_xM$. Hence,
$$
2\<\B^{[2]}\omega,\omega\>(x)=\sum_{i,j,k,\ell=1}^n\big(R_{i\ell ij}\omega_{jk}
+R_{i\ell kj}\omega_{ij}\big)\omega_{k\ell}. 
$$

Using \eqref{Lambda}, we have
\begin{align*}
\sum_{i,j,k,\ell=1}^nR_{i\ell ij}\omega_{jk}\omega_{k\ell}=&\sum_{a,b=1}^n\lambda_a^2\big(\left<R(v_b,w_a)w_a,v_b\right>
+ \left<R(w_b,w_a)w_a,w_b\right> \nonumber\\
&\;+\left<R(v_b,v_a)v_a,v_b\right>
+ \left<R(w_b,v_a)v_a,w_b\right>\big).
\end{align*}
Similarly, 
$$
\sum_{i,j,k,\ell=1}^nR_{i\ell kj}\omega_{ij}\omega_{k\ell}
=2\sum_{a,b=1}^n\lambda_a\lambda_b\big(\left<R(v_b,w_a)w_a,v_b\right> 
-\left<R(v_b,v_a)w_a,w_b\right> \big).
$$
Form the Bianchi identity, 
$$
\left<R(v_b,w_a)v_a,w_b\right>-\left<R(v_b,v_a)w_a,w_b\right>
=\left<R(v_b,w_b)w_a,v_a\right>,
$$
and thus
$$
\sum_{i,j,k,\ell=1}^nR_{i\ell kj}\omega_{ij}\omega_{k\ell}
=2\sum_{a,b=1}^n\lambda_a\lambda_b\left<R(v_b,w_a)v_a,w_b\right>.
$$

Combing the above, we obtain 
\begin{align}\label{even}
\<\B^{[2]}\omega,\omega\>(x)&=
\sum_{1\leq a\neq b\leq k}\lambda_a^2\big(\left<R(v_b,v_a)v_a,v_b\right>
+ \left<R(v_b,w_a)w_a,v_b\right> \nonumber\\
&+\left<R(w_b,v_a)v_a,w_b\right>
+ \left<R(w_b,w_a)w_a,w_b\right>\big)\\
&+2\sum_{1\leq a\neq b\leq k}\lambda_a\lambda_b\left<R(v_b,w_b)v_a,w_a\right>.
\nonumber
\end{align} 

\noindent $(i)$ By our assumption on the isotropic curvature, we have 
\begin{align}
\lambda_a^2\big(\left<R(v_b,v_a)v_a,v_b\right>
&+ \left<R(v_b,w_a)w_a,v_b\right> +\left<R(w_b,v_a)v_a,w_b\right>
 \nonumber\\
&+ \left<R(w_b,w_a)w_a,w_b\right>\big)
\geq2\lambda_a^2\left|\<R(v_b,w_b)v_a,w_a\>\right|. \label{even1}
\end{align}
From \eqref{even} and \eqref{even1}, it follows that 
\begin{align*}
\<\B^{[2]}\omega,\omega\>(x)&\geq
2\sum_{1\leq a\neq b\leq k}\big(\lambda_a^2\left|\<R(v_b,w_b)v_a,w_a\>\right|
+2\lambda_a\lambda_b\left<R(v_b,w_b)v_a,w_a\right>\big)\nonumber\\
&\geq\sum_{1\leq a\neq b\leq k}(|\lambda_a|-|\lambda_b|)^2\left|\<R(v_b,w_b)v_a,w_a\>\right|\geq0. 
\end{align*} 
This shows that $\B^{[2]}$ is nonnegative at $x$. 
\vspace{1ex}

\noindent $(ii)$ Let $\omega\in\Omega(M^n)$ 
be a 2-form such that 
$\<\B^{[2]}\omega,\omega\>(x)=0$. 
Then all of the above inequalities hold as equalities. In 
particular, equality in \eqref{even1} yields \eqref{even=}.
\qed

\begin{proposition}\label{propeven}
Let $M^n$, with $n=2k\geq4$, be an even-dimensional 
Riemannian manifold. Suppose that an isometric immersion 
$f\colon M^n \to \accentset{\sim}{M}^{n+m}$ 
satisfies inequality \eqref{1} at a point $x\in M^n$. 
Then the following assertions hold:
\vspace{1ex}

\noindent $(i)$ The Bochner-Weitzenb\"ock operator 
$\B^{[2]}$ is nonnegative at $x$. 
\vspace{1ex}

\noindent $(ii)$ Let $\omega\in\Omega^2(M^n)$ 
be a 2-form such that 
$$
\<\B^{[2]}\omega,\omega\>(x)=0 \quad \text{and}\quad \omega_x\neq0.
$$
If $n\geq6$, then the following conditions are satisfied at $x$:
\begin{itemize}
\item[(a)] The inequality \eqref{1} holds as an equality at $x$, and 
\be\label{mM}
\accentset{\sim}{K}^f_{\max}(x)=4\accentset{\sim}{K}^f_{\min}(x).
\ee 
\item[(b)] If $f$ is not totally geodesic at $x$, then exactly 
one of the numbers $\lambda_a$, $1\leq a\leq k$, in \eqref{Lambda} 
is nonzero, and 
$
\accentset{\sim}{K}^f_{\min}(x)=\accentset{\sim}{K}^f_{\max}(x)=0. 
$
Moreover, the vector 
$\delta=n\mathcal H_f/(n-2)$ 
is a Dupin principal normal of multiplicity $n-2$, and the 
orthogonal complement of $E_\delta(x)$ in $T_xM$ is the 
relative nullity distribution of $f$ at $x$.
\end{itemize}
\end{proposition}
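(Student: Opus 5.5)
Part $(i)$ is immediate: by part $(i)$ of Proposition \ref{propu}, $M^n$ has nonnegative isotropic curvature at $x$, and part $(i)$ of Lemma \ref{lemmaeven2} then gives $\B^{[2]}\geq0$ at $x$.

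For part $(ii)$, I would fix the adapted basis $\{v_a,w_a\}$ with \eqref{Lambda}, where not all $\lambda_a$ vanish, and go back to the chain of inequalities in the proof of Lemma \ref{lemmaeven2}. Since $\<\B^{[2]}\omega,\omega\>(x)=0$, all of them are equalities. Pick $a$ with $\lambda_a\neq0$. Then \eqref{even=} says that for every $b\neq a$ the orthonormal $4$-frame $\{v_b,w_b,v_a,w_a\}$, with a suitable sign $\varepsilon$, realizes \eqref{eq1}; such $b$ exist because $k\geq3$. Consequently $M^n$ fails to have positive isotropic curvature at $x$, and part $(ii)$ of Proposition \ref{propu} gives equality in \eqref{1}. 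It also makes conditions \eqref{c1}--\eqref{c5} and \eqref{c22} available for every such frame, and shows that $\delta=n\mathcal H_f/(n-2)$ is a principal normal whose eigenspace $E_\delta$ contains the orthogonal complement of $\spa\{v_a,w_a,v_b,w_b\}$.

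The key step is to play two such frames against each other. Take $b\neq c$, both different from $a$, which is possible since $n\geq6$. From the frame $(b,a)$ we get that $v_c,w_c\in E_\delta$. From the frame $(c,a)$ we get that $v_b,w_b\in E_\delta$, and also that all $e_j$ with $j\neq a$ lie in $E_\delta$. So $E_\delta\supseteq V_a^\perp$, where $V_a=\spa\{v_a,w_a\}$.

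Now apply \eqref{c1} and \eqref{c2} in the frame $(b,a)$, with $e_1=v_b$, $e_2=w_b$, $e_3=v_a$, $e_4=w_a$:
\begin{itemize}
\item Since $e_1,e_2\in E_\delta$, we have $\a_{13}=\a_{14}=0$, and hence $\a_{23}=\a_{24}=0$.
\item \eqref{c1} gives $\a_{11}=\delta$.
\item \eqref{c2} then gives $\a_{33}=\a_{44}=0$, together with $\a_{34}=0$.
\end{itemize}
Therefore $V_a$ is contained in the relative nullity $\mathcal D_f(x)$. If $f$ is not totally geodesic at $x$, then $\delta\neq0$, so $E_\delta=V_a^\perp$ has multiplicity $n-2$ and $E_\delta^\perp=V_a=\mathcal D_f(x)$.

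Next I would use \eqref{c22} in this frame. It reads $\<\a_{11},\a_{44}\>=0=\|\a_{13}\|^2+\|\a_{14}\|^2-\tfrac43(\accentset{\sim}{K}^f_{\min}-\tfrac14\accentset{\sim}{K}^f_{\max})$, which yields $\accentset{\sim}{K}^f_{\max}=4\accentset{\sim}{K}^f_{\min}$. This is \eqref{mM}, and it holds regardless of whether $f$ is totally geodesic: in the totally geodesic case \eqref{c22} again reads $0=-\frac43(\cdots)$. That settles (a).

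Uniqueness of the nonzero $\lambda_a$ in (b) would follow by contradiction. If also $\lambda_{a'}\neq0$ for some $a'\neq a$, the same argument gives $V_{a'}\subset\mathcal D_f$. But $V_{a'}\subset V_a^\perp=E_\delta$, which forces $\delta=0$, and then $f$ is totally geodesic at $x$, contrary to assumption.

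The remaining task, and the one I expect to be the main obstacle, is showing $\accentset{\sim}{K}^f_{\min}=0$; then \eqref{mM} gives $\accentset{\sim}{K}^f_{\max}=0$ as well. I would compare Gauss-equation curvatures, using \eqref{c3}, which says that the mixed planes $v_b\wedge v_a$ have ambient curvature $\accentset{\sim}{K}^f_{\min}$. Two routes seem available. The first is to exploit the equality case of the final step in Lemma \ref{lemmaeven2}, namely $(|\lambda_a|-|\lambda_b|)^2\,|\<R(v_b,w_b)v_a,w_a\>|=0$ with $\lambda_b=0$. This forces $R(v_b,w_b,v_a,w_a)=0$, so by Gauss and nullity $\accentset{\sim}{R}^f_{1234}=0$, and then \eqref{c4} gives $\accentset{\sim}{K}^f_{\max}=\accentset{\sim}{K}^f_{\min}$. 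Combined with \eqref{mM}, both vanish. That is the route I would pursue first; I would check carefully that the absolute value term indeed appears with nonzero coefficient $\lambda_a^2$.
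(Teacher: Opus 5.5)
Your proposal is correct and follows the paper's proof essentially step by step. The common route is: vanishing isotropic curvature on the frames $\{v_b,w_b,v_a,w_a\}$, then Proposition \ref{propu}, then $E_\delta\supseteq V_a^\perp$ and $V_a\subset\mathcal D_f(x)$, then uniqueness of the nonzero $\lambda_a$ by contradiction. Only the last step differs: the paper gets $\accentset{\sim}{K}^f_{\min}=0$ directly by evaluating $\B^{[2]}$ on $e_1\wedge e_2$ (Ricci terms, the Gauss equation with nullity, and \eqref{c3}). Your route through $\<R(v_b,w_b)v_a,w_a\>=0$, \eqref{c4} and \eqref{mM} also works, since with $\lambda_b=0$ for $b\neq a$ the equality case of Lemma \ref{lemmaeven2} does force that curvature component to vanish.
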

\proof
\noindent $(i)$ This follows directly from part $(i)$ of Proposition 
\ref{propu} together with part $(i)$ of Lemma \ref{lemmaeven2}. 
\vspace{1ex}

\noindent $(ii)$ Choose an orthonormal basis 
$\{e_i\}_{i=1}^n$ of 
$T_xM$ such that $e_{2a-1}=v_a$ and 
$e_{2a}=w_a$ for $1\leq a\leq k$, where 
$\{v_1,w_1,\dots,v_k,w_k\}$ is chosen so that \eqref{Lambda} holds. 
Since $\omega_x\neq0$, we may assume, without loss 
of generality, that $\lambda_1\neq0$. It follows from 
\eqref{even=} that the isotropic curvature vanishes for the 
$4$-frame $\{v_1,w_1,v_a,w_a\}$ for all $a\neq1$. 

Then part $(ii)$ of Proposition \ref{propu} implies that 
\eqref{1} holds as an equality, and the vector 
$\delta=n\mathcal H_f/(n-2)$
is a Dupin principal normal such that $E_\delta(x)$ contains 
the orthogonal complement of 
${\operatorname{span}}\{v_1,w_1,v_a,w_a\}$
in $T_xM$ for all $a\neq1$. Moreover, conditions 
\eqref{c1}-\eqref{c5} are satisfied for all such $4$-frames. 
Thus 
$
{\operatorname{span}}\left\{e_i\right\}_{i=3}^n\subset E_\delta(x). 
$
In particular, 
$$
\a_{ii}=\delta\quad\text{for all }\;i\geq3. 
$$
Then it follows from the first equality in \eqref{c1} and 
\eqref{c2} that 
$
\a_{11}=\a_{22}=0.
$
Furthermore, from 
$
{\operatorname{span}}\{e_i\}_{i=3}^n\subset E_\delta(x) 
$ 
and the last three equalities in \eqref{c1}, we obtain 
$$
\a_{ij}=0\quad\text{for all }\;1\leq i\neq j\leq n.
$$
Hence, ${\operatorname{span}}\{e_1,e_2\}$ is 
contained in the relative nullity distribution of $f$ at $x$. 
Therefore, 
$$
S(x)=(n-2)\|\delta\|^2=\frac{n^2H^2(x)}{n-2}.
$$ 
Since 
equality holds in \eqref{1}, we conclude that 
\eqref{mM} holds.

Suppose now that $f$ is not totally geodesic at $x$. 
Then $\delta\neq0$ is a Dupin 
principal normal of multiplicity $n-2$ with 
$E_\delta(x)={\operatorname{span}}\{e_i\}_{i=3}^n$, 
and the relative nullity distribution of $f$ at $x$ is 
$\mathcal D_f(x)={\operatorname{span}}\{e_1,e_2\}$.

We now prove that exactly one of the 
numbers $\lambda_a$, $1\leq a\leq k$, is 
nonzero. Suppose to the 
contrary that $\lambda_2\neq0$ as well. Then 
\eqref{even=} implies that the isotropic 
curvature vanishes for the $4$-frame 
$\{v_2,w_2,v_a,w_a\}$ for all $a\neq2$. 
Hence, by part $(ii)$ of Proposition \ref{propu}, 
we conclude that the orthogonal complement of 
${\operatorname{span}}\{v_2,w_2,v_a,w_a\}$
in $T_xM$ is contained in $E_\delta(x)$ for 
all $a\neq2$. Therefore, 
$
{\operatorname{span}}\left\{e_1,e_2\right\}\subset E_\delta(x), 
$
which is clearly a contradiction. 
Hence, 
$$
\lambda_a=0\quad\text{for all }\; 2\leq a\leq k.
$$

It remains to prove that 
$
\accentset{\sim}{K}^f_{\min}(x)=\accentset{\sim}{K}^f_{\max}(x)=0. 
$
Let $\{\omega_i\}_{i=1}^n$ denote the dual 
basis to $\{e_i\}_{i=1}^n$. Then 
$
\omega_x=\lambda_1\omega_1\wedge\omega_2
$
and thus, by assumption, 
$$
\<\<\B^{[2]}e_1\wedge e_2,e_1\wedge e_2\>\>=0.
$$
Using \eqref{WB}, this can be written as
$$
\Ric(e_1)+\Ric(e_2)-2R_{1221}=0.
$$
Since $\mathcal D_f(x)={\operatorname{span}}\{e_1,e_2\}$,
via the Gauss equation, the above becomes
$$
\sum_{i=3}^n\big(\accentset{\sim}{R}^f_{1ii1}
+\accentset{\sim}{R}^f_{2ii2}\big)=0. 
$$
On the other hand, condition \eqref{c3} implies that 
$$
\accentset{\sim}{R}^f_{1ii1}
=\accentset{\sim}{R}^f_{2ii2}
=\accentset{\sim}{K}^f_{\min}(x)\quad\text{for all }\;3\leq i\leq n, 
$$
by applying the condition to the corresponding frames.
Hence, $\accentset{\sim}{K}^f_{\min}(x)=0$, and then \eqref{mM} 
yields $\accentset{\sim}{K}^f_{\max}(x)=0$.
\qed 

\section{Proof of the result for $n\geq5$}
The proof of Theorem \ref{n5} is divided into two cases, depending on whether the fundamental group of the submanifold is finite or infinite.

Before proceeding, we first establish the following result.

\begin{theorem}\label{ceven}
Let $f\colon M^n\to\accentset{\sim}{M}^{n+m}$, 
$n\geq 6$, be an isometric immersion of a compact, 
locally irreducible Riemannian manifold. Assume that 
inequality \eqref{1} holds at every point. If 
$n$ is even and the second Betti number satisfies 
$\beta_2(M^n)>0$, then $M^n$ is 
isometric to the complex projective space 
$\CP^{n/2}$, endowed with the Fubini-Study 
metric up to scaling, and $f$ is totally geodesic. 
\end{theorem}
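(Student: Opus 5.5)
Since $\beta_2(M^n)>0$, Hodge theory gives a nonzero harmonic $2$-form $\omega$ on $M^n$. By part $(i)$ of Proposition \ref{propeven}, $\B^{[2]}$ is nonnegative everywhere. The Bochner--Weitzenb\"ock formula integrated over the compact manifold $M^n$ therefore forces $\n\omega=0$ and $\<\B^{[2]}\omega,\omega\>=0$ at every point. Being parallel and nonzero, $\omega$ has constant nonzero norm and constant eigenvalues $\lambda_a$ in \eqref{Lambda}. Hence part $(ii)$ of Proposition \ref{propeven} applies at every point $x\in M^n$ (here $n\geq6$). Consequently equality holds in \eqref{1} everywhere and $\accentset{\sim}{K}^f_{\max}=4\accentset{\sim}{K}^f_{\min}$.

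The next step is to rule out points where $f$ is not totally geodesic. Suppose $f$ is not totally geodesic at some $x$; this is an open condition. Then by part $(ii)$(b) exactly one $\lambda_a$ is nonzero, so $\omega$ has rank $2$ at $x$. Since $\omega$ is parallel, its rank is constant on $M^n$. The kernel of the skew endomorphism associated with $\omega$ is then a parallel distribution of rank $n-2$, with parallel complement $\spa\{v_1,w_1\}$. By de Rham this makes $M^n$ locally reducible, contradicting local irreducibility. Therefore $f$ is totally geodesic everywhere.

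With $f$ totally geodesic, the Gauss equation gives $R=\accentset{\sim}{R}^f$. Using $\accentset{\sim}{K}^f_{\max}=4\accentset{\sim}{K}^f_{\min}$, $M^n$ has weakly $1/4$-pinched sectional curvature. Brendle--Schoen then says $M^n$ is either diffeomorphic to a spherical space form or locally symmetric. The first case is excluded because $\beta_2>0$ and $n\geq 6$. Moreover $M^n$ is not flat: otherwise $\accentset{\sim}{K}^f_{\min}=0$, which together with irreducibility of a compact flat manifold is impossible for $n\geq 2$.

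So $M^n$ is a quotient of a compact rank-one symmetric space. Among $\CP^{n/2}$, $\HP^{n/4}$, $\OP^2$ and their quotients, only those covered by $\CP^{n/2}$ have $\beta_2>0$. Here I need to exclude the quotient $\CP^{2k-1}/\Z_2$, on which the Kähler form is anti-invariant, so $\beta_2=0$. Hence $M^n$ is $\CP^{n/2}$. The curvature is constant multiple of Fubini--Study, with the scale fixed by $\accentset{\sim}{K}^f_{\min}$, i.e.\ up to scaling of the ambient metric.

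I expect the main obstacle to be the pointwise-to-global step. One has to make sure the rank-$2$ structure of $\omega$ genuinely produces a parallel splitting. Alternatively, one can show directly that $\delta=0$ must hold, using the relative nullity structure from Proposition \ref{propeven}$(ii)$(b). If the de Rham argument were shaky, the fallback is this: on the open set where $f$ is not totally geodesic, $\accentset{\sim}{K}^f_{\min}=\accentset{\sim}{K}^f_{\max}=0$ and the nullity $\spa\{e_1,e_2\}$ coincides with the $\omega$-plane. One would then show via the Gauss equation that this plane is flat and splits off, again contradicting irreducibility.
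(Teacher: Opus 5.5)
Your proof is correct. Its first half matches the paper's: the harmonic form is parallel with $\<\B^{[2]}\omega,\omega\>=0$, Proposition \ref{propeven}$(ii)$ applies everywhere, and total geodesy follows from irreducibility. For that last point, the paper argues that the eigenspaces of the parallel self-adjoint endomorphism $L^2$ are parallel. Irreducibility then forces $L^2=\mu\,{\rm Id}$ with all $\lambda_a^2=-\mu>0$, which contradicts ``exactly one $\lambda_a\neq0$''. Your use of $\ker L$ as a parallel distribution of rank $n-2$ is the same idea in a more direct form. The fallback in your last paragraph is therefore unnecessary, since $\n L=0$ immediately makes $\ker L$ parallel.

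The genuine divergence is in the final identification. The paper keeps $J=L/\sqrt{-\mu}$, so $M^n$ is K\"ahler, and invokes Theorem 2.1 of \cite{MW} to get simple connectivity. It then needs only positivity of curvature and Berger--Klingenberg to reach $\CP^{n/2}$, and never has to discuss quotients. You instead allow a non-simply-connected $M^n$ and use $\beta_2>0$ as the filter. It excludes:
\begin{itemize}
\item the spherical space forms in the Brendle--Schoen dichotomy, an alternative the paper passes over tacitly;
\item $\HP^{n/4}$ and $\OP^2$;
\item the quotient $\CP^{2k-1}/\Z_2$, via anti-invariance of the K\"ahler class.
\end{itemize}
This avoids both the appeal to \cite{MW} and the K\"ahler structure. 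The price is reliance on the classification of free isometric actions on rank-one symmetric spaces, which the paper uses anyway in Theorem \ref{p1fn5}.

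One step you state too quickly is ``$M^n$ is a quotient of a compact rank-one symmetric space''. You should add that the universal cover is an irreducible symmetric space whose curvature is nonnegative and not identically zero, hence of compact type. Then, as the paper does, argue that a vanishing sectional curvature at one point would, by weak $1/4$-pinching, make the whole curvature vanish at that point, and hence everywhere by homogeneity. So the curvature is positive and the rank is one.
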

\proof
Proposition \ref{propu} implies that $M^n$ has 
nonnegative isotropic curvature. Moreover, by part $(i)$ 
of Proposition \ref{propeven}, the Bochner-Weitzenb\"ock 
operator $\B^{[2]}$ is nonnegative. Since $\beta_2(M^n)>0$, 
there exists a nontrivial harmonic 2-form 
$\omega\in\Omega^2(M^n)$. By the 
Bochner-Weitzenb\"ock formula, $\omega$ is 
parallel and 
$
\<\B^{[2]}\omega,\omega\>=0
$
everywhere. Hence part $(ii)$ of Proposition 
\ref{propeven} applies at every point. 

Define the skew-symmetric endomorphism $L$ 
of the tangent bundle of $M^n$ by 
$$
\omega(X,Y)=\<LX,Y\>\quad\text{for all } X,Y\in\mathcal X(M). 
$$
Since $\omega$ is parallel, it follows that $L$ is parallel. 
Hence, $L^2$ is also parallel and self-adjoint. Therefore, 
the eigenvalues of $L^2$ are constant on every connected 
component of an open dense subset of $M^n$. By continuity, 
the eigenvalues are in fact constant on all of $M^n$. We 
claim that these eigenvalues are all equal. Otherwise, the 
eigenbundles, corresponding to distinct eigenvalues, 
would yield an orthogonal decomposition of $TM$ 
into parallel subbundles. This contradicts the fact that 
$M^n$ is locally irreducible. 

Hence $L^2=\mu\,{\rm{Id}}$ for some constant $\mu\in\R$. 
At any point $x\in M^n$, we choose an orthonormal 
basis $\{e_i\}_{i=1}^n$ of $T_xM$ such that 
$e_{2a-1}=v_a$ and $e_{2a}=w_a$ for 
$1\leq a\leq k$, where $\{v_1,w_1,\dots,v_k,w_k\}$ 
is chosen so that \eqref{Lambda} holds. Clearly, 
at least one of the numbers $\lambda_a$ is different 
from zero. Then
$$
Le_{2a-1}=\lambda_ae_{2a},\quad Le_{2a}=
-\lambda_ae_{2a-1}\quad\text{for all }\;1\leq a\leq k.
$$
Since $L^2=\mu\,{\rm{Id}}$, we obtain 
$\lambda^2_1=\dots=\lambda^2_k=-\mu<0$. 
Hence $J=\frac{1}{\sqrt{-\mu}}\, L$
defines an almost complex structure. Because $L$ is 
skew-symmetric, $J$ is orthogonal. Thus, the 
triple $(M^n,\<\cdot,\cdot\>,J)$ is a K\"ahler manifold. 

Next, we claim that $f$ is totally geodesic. Suppose 
to the contrary that $f$ is not totally geodesic at a point 
$x_0\in M^n$. Then part $(ii)$-$(b)$ of Proposition \ref{propeven} 
would imply that exactly one of the numbers 
$\lambda_a$, $1\leq a\leq k$, 
is nonzero, which is a contradiction. 
Hence, $f$ is totally geodesic. 

It follows from the Gauss equation and \eqref{mM} that 
the manifold $M^n$ has weakly $1/4$-pinched sectional curvature, 
that is $0\leq K(\sigma_1)\leq 4K(\sigma_2)$ for all points
$x\in M^n$ and all two-planes $\sigma_1,\sigma_2\subset T_xM$. 
Theorem 1 in \cite{BSacta} implies that $M^n$ is locally 
symmetric. It follows from Theorem 2.1 in \cite{MW} that 
$M^n$ is simply connected. Hence, $M^n$ is a compact 
Hermitian symmetric space. 

We claim that $M^n$ has positive sectional curvature. 
Suppose, to the contrary, that $K(\sigma_0)=0$ 
for some point $x_0\in M^n$ and a two-plane 
$\sigma_0\subset T_{x_0}M$. Then $M^n$ is flat at $x_0$, 
because it has weakly $1/4$-pinched sectional curvature. 
Since $M^n$ is homogeneous, it follows that $M^n$ is flat. 
This contradicts the fact that $M^n$ is compact and 
simply connected. Therefore, $M^n$ has positive 
sectional curvature. By a classical result due to Berger 
\cite{Berg} and Klingenberg \cite{Kling}, $M^n$ is a symmetric 
space of rank one. Thus $M^n$ is isometric to the complex 
projective space $\CP^{n/2}$ with the Fubini-Study metric 
up to scaling.
\qed

\subsection{Submanifolds with finite fundamental group}

The aim of this section is to establish the following result for 
submanifolds with finite fundamental group satisfying the 
pinching condition \eqref{1}.

\begin{theorem}\label{p1fn5}
Let $f\colon M^n\to\accentset{\sim}{M}^{n+m}$, 
$n\geq 5$, be an isometric immersion of a 
compact Riemannian manifold with finite 
fundamental group. Assume that inequality 
\eqref{1} is satisfied and that 
$\accentset{\sim}{K}^f_{\min}\geq0$ at every 
point. Then $M^n$ is locally irreducible, and 
one of the following alternatives holds: 
\vspace{1ex}

\noindent $(i)$ The manifold $M^n$ is diffeomorphic 
to a spherical space form.
\vspace{1ex}

\noindent $(ii)$ Equality holds in \eqref{1} at every 
point, the immersion $f$ is totally geodesic, and either 
$M^n$ is isometric to the complex projective space 
$\CP^{n/2}$, the quaternionic projective space 
$\HP^{n/4}$, the Cayley plane $\OP^2$ (all endowed 
with their canonical Riemannian metrics), or $M^n$ 
is isometric to the twisted complex projective space 
$\CP^{2k-1}/\Z_2$, where the $\Z_2$-action 
arises from an anti-holomorphic involutive isometry 
with no fixed points.

\indent 
In particular, if $n$ is even and $\beta_2(M^n)>0$, then 
$f$ is totally geodesic and $M^n$ is isometric to the 
complex projective space $\CP^{n/2}$, endowed 
with the Fubini-Study metric up to scaling of the 
ambient metric. 
\end{theorem}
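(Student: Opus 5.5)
The proof has three steps: show $M^n$ is locally irreducible, apply Theorem~\ref{acta}, and then treat each of its alternatives.

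\textbf{Local irreducibility.} By Proposition~\ref{propu}, $M^n$ has nonnegative isotropic curvature. Suppose $M^n$ were locally reducible. Since $\pi_1(M^n)$ is finite, the universal cover $\hat M^n$ is compact, and the lifted immersion $\hat f=f\circ\pi$ still satisfies \eqref{1} with $\accentset{\sim}{K}^{\hat f}_{\min}\geq0$. By Theorem~\ref{duke}, $\hat M^n$ is a product. Compactness excludes a Euclidean factor $\R^{n_0}$ with $n_0>0$ and also excludes the noncompact surface factor in case $(b)$. So $\hat M^n=N_1\times\cdots\times N_r$ with $r\geq2$, every $N_i$ compact, and every $n_i\geq2$.
\begin{itemize}
\item If all $n_i<n-2$, Proposition~\ref{propprod}$(i)$ shows $\hat M^n$ is flat. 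This contradicts compactness together with simple connectivity.
\item Otherwise some $n_i\geq n-2$, so $n_1=2$ and $n_2=n-2$, and Proposition~\ref{propprod}$(ii)$ shows $N_1$ is flat. But $N_1\cong\Sf^2$ is compact, which is impossible by Gauss--Bonnet.
\end{itemize}
Hence $M^n$ is locally irreducible.

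\textbf{Applying Theorem~\ref{acta}.} Theorem~\ref{acta} gives three alternatives.
\begin{enumerate}
\item[(1)] $M^n$ is diffeomorphic to a spherical space form. This is case $(i)$ of the statement.
\item[(2)] $\hat M^n$ is Kähler and biholomorphic to $\CP^{n/2}$. Then $n$ is even, $n\geq6$, and $\beta_2(\hat M^n)>0$. Theorem~\ref{ceven} applied to $\hat f$ shows $\hat f$ is totally geodesic and $\hat M^n$ is Fubini--Study $\CP^{n/2}$. Consequently $f$ is totally geodesic, and $M^n$ is a quotient of $\CP^{n/2}$ by a finite group acting freely by isometries.
\item[(3)] $\hat M^n$ is a compact symmetric space.
\end{enumerate}

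\textbf{Symmetric case (main obstacle).} In case (3) I need to show that $\hat M^n$ has rank one, or is a round sphere, in which case $M^n$ is already diffeomorphic to a spherical space form. Assume $\hat M^n$ has higher rank. I would use the Appendix result on complex sectional curvature of compact irreducible symmetric spaces to produce, at every point, an orthonormal $4$-frame on which the isotropic curvature vanishes. Proposition~\ref{propu}$(ii)$ then gives equality in \eqref{1} everywhere. It also shows that $\delta=n\mathcal H_f/(n-2)$ is a principal normal whose eigenspace $E_\delta$ contains $V_F^\perp$, and that the structure \eqref{c1}--\eqref{c5} holds.

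Since $\hat M^n$ is homogeneous, these vanishing frames are abundant: every rotation of the frame within $V_1$ and $V_2$ remains vanishing, and isometries move the frame around. Taking enough such frames, $E_\delta$ becomes large enough to force $f$ to be totally geodesic, just as in the proof of Proposition~\ref{propeven}$(ii)$.

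Once $f$ is totally geodesic, equality in \eqref{1} gives $\accentset{\sim}{K}^f_{\max}\leq 4\accentset{\sim}{K}^f_{\min}$, which together with $\accentset{\sim}{K}^f_{\min}\geq0$ makes $M^n$ weakly $1/4$-pinched. I then conclude as in the proof of Theorem~\ref{ceven}:
\begin{itemize}
\item homogeneity rules out a point with a zero sectional curvature (weak $1/4$-pinching would make $\hat M^n$ flat there, hence everywhere);
\item so the curvature is positive, and Berger--Klingenberg gives rank one, hence $\CP^{n/2}$, $\HP^{n/4}$ or $\OP^2$ (a round sphere falls under $(i)$).
\end{itemize}
This contradicts the higher-rank assumption.

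The step I expect to be hardest is the totally-geodesic conclusion just described, which depends on exactly how much the Appendix result gives about vanishing isotropic frames.

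\textbf{Quotients.} Finally I classify free isometric quotients of the rank-one spaces. It is classical that $\HP^{n/4}$ and $\OP^2$ admit no nontrivial free isometric quotients. For $\CP^{n/2}$, holomorphic isometries have fixed points (Lefschetz), so only an anti-holomorphic fixed-point-free involution survives. Such an involution exists only when $n/2$ is odd, which gives $\CP^{2k-1}/\Z_2$.

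The final ``in particular'' claim follows from Theorem~\ref{ceven}, using local irreducibility. The twisted quotient is excluded there because it has $\beta_2=0$.
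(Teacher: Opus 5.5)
Your proof of local irreducibility is correct and takes a different route from the paper. You apply Theorem \ref{duke} to the compact universal cover and rule out every product with Proposition \ref{propprod}. The paper instead first splits according to whether $M^n$ has a point of positive isotropic curvature, and in that case uses Seshadri's deformation and the Micallef--Moore theorem to get $\hat M^n$ homeomorphic to $\Sf^n$. Since you handle case $(ii)$ of Theorem \ref{acta} by Theorem \ref{ceven}, your version genuinely avoids that split.

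The gap is in the symmetric case, exactly at the step you flagged. None of the mechanisms you propose makes $E_\delta(x)$ large enough.
\begin{itemize}
\item Rotating $F$ within $V_1$ and $V_2$ leaves $V_F$, hence $V_F^\perp$, unchanged.
\item Transporting frames to other points by isometries says nothing about $E_\delta(x)$. The inclusion $V_F^\perp\subset E_\delta(x)$ from Proposition \ref{propu}$(ii)$ is pointwise, and $f$ has no equivariance.
\item The analogy with Proposition \ref{propeven}$(ii)$ points the wrong way. There all frames $\{v_1,w_1,v_a,w_a\}$ contain $\spa\{v_1,w_1\}$, so one only gets $\spa\{e_3,\dots,e_n\}\subset E_\delta$ and $\spa\{e_1,e_2\}\subset\mathcal D_f$. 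That is precisely the non-totally-geodesic situation of part $(b)$ there.
\end{itemize}
What you need is that the set $\mathcal F_x$ of orthonormal $4$-frames at $x$ with vanishing isotropic curvature has no common direction, i.e.\ $\bigcap_{F\in\mathcal F_x}V_F=\{0\}$.

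The missing idea is irreducibility of the isotropy representation, not homogeneity. The isotropy group $K_x$ of the irreducible symmetric space $\hat M^n$ preserves the curvature tensor, hence permutes $\mathcal F_x$. So $\bigcap_{F\in\mathcal F_x}V_F$ is a $K_x$-invariant subspace of dimension at most $4<n$. Since $K_x$ acts irreducibly on $T_x\hat M$, this subspace is $\{0\}$, i.e.\ $\sum_{F}V_F^\perp=T_x\hat M$; this is Proposition \ref{fillup}. Hence $E_\delta(x)=T_x\hat M$, so $\alpha_{\hat f}=\langle\cdot,\cdot\rangle\,\delta$ and $\mathcal H_{\hat f}=\delta=\frac{n}{n-2}\mathcal H_{\hat f}$. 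This forces $\mathcal H_{\hat f}=0$, so $\hat f$ is totally geodesic.

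You should also run this argument for every irreducible symmetric $\hat M^n$ of non-constant curvature, not only under a higher-rank hypothesis. When $\hat M^n$ is $\CP^{n/2}$, $\HP^{n/4}$ or $\OP^2$, alternative $(ii)$ still requires equality in \eqref{1} and total geodesicity of $f$, and your proof by contradiction does not deliver these. Likewise, in your case (2) you should record equality in \eqref{1}. It follows from Proposition \ref{propu}$(ii)$, since Fubini--Study $\CP^{n/2}$ has vanishing isotropic frames at every point.
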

\proof
Proposition \ref{propu} implies that $M^n$ has 
nonnegative isotropic curvature. We begin by 
proving the theorem for simply connected submanifolds. 
The proof is divided into two cases.
\vspace{0.5ex}

\indent\emph{Case I}. Suppose that there exists 
a point at which $M^n$ has positive isotropic 
curvature. Since, $M^n$ has nonnegative isotropic 
curvature, it follows from Remark (iv) in \cite{ses} 
that $M^n$ admits a metric of positive isotropic 
curvature. By the result of Moore and Micallef 
\cite{MM}, $M^n$ must be homeomorphic to $\Sf^n$. 

Suppose $M^n$ were locally reducible. Then there would 
exist a decomposition 
$M^n\cong M_1\times M_2$ with $\dim M_1,\dim M_2\geq1$. 
However, $\Sf^n$ cannot be expressed as a topological product 
of lower-dimensional manifolds, so this is impossible. 
Therefore, $M^n$ is locally irreducible.
Hence, $M^n$ falls under the cases described in Theorem \ref{acta}. 
Since $M^n$ is homeomorphic to $\Sf^n$, case $(ii)$ 
in that theorem is excluded. In the remaining cases, 
the manifold is therefore diffeomorphic to $\Sf^n$. 
\vspace{0.5ex}

\indent\emph{Case II}. Now suppose that $M^n$ has no 
points of positive isotropic curvature. It then follows from 
part $(ii)$ of Proposition \ref{propu}, that the inequality 
\eqref{1} holds as an equality at every point. 

We claim that $M^n$ is locally irreducible. Suppose, 
to the contrary, that $M^n$ is reducible. Then $M^n$ 
must be isometric to a Riemannian product as in one 
of the two cases of Theorem \ref{duke}. Case $(b)$ of 
that theorem is ruled out by part $(ii)$ of Proposition 
\ref{propprod}. Suppose that $M^n$ is a Riemannian product 
as in case $(a)$. Since $M^n$ has no Euclidean 
factor, Proposition \ref{propprod} applies. 
For dimension reasons, only cases $(i)$ and $(ii)$ in 
that proposition can occur. If case $(i)$ occurs, then 
$M^n$ would be flat, contradicting the fact 
that $M^n$ is compact and simply connected. In case 
$(ii)$, we have $n_1=2$ and $N_1$ is flat, which 
contradicts the requirement that $N_1$ be 
diffeomorphic to $\Sf^2$. 

Hence, $M^n$ is locally irreducible and therefore 
falls under the cases described in Theorem \ref{acta}. 
In case $(i)$, $M^n$ is clearly diffeomorphic to $\Sf^n$. 
In case $(ii)$, $M^n$ is a K\"ahler manifold 
biholomorphic to $\CP^{n/2}$. Then Theorem 
\ref{ceven} implies that $M^n$ is isometric to 
the complex projective space $\CP^{n/2}$, 
endowed with the Fubini-Study metric up to 
scaling, and that $f$ is totally geodesic. 

Assume now that case $(iii)$ holds, namely that $M^n$ 
is isometric to a compact symmetric space. Since 
$M^n$ has no points of positive isotropic curvature, 
its sectional curvature cannot be constant 
unless $M^n$ is flat. The later possibility is ruled out 
by the fact that $M^n$ is compact and simply connected. 

From part $(ii)$ of Proposition \ref{propu} and Proposition 
\ref{fillup}, we conclude that $E_\delta(x)=T_xM$ at each 
point $x$. Thus, $f$ is totally umbilical. Since 
$\delta=n\mathcal H_f/(n-2)$, it follows that $f$ is 
totally geodesic. The Gauss equation then implies that 
the symmetric space $M^n$ has weakly 
$1/4$-pinched sectional curvatures. Therefore, $M^n$ is 
a compact rank-one symmetric space (see \cite{Berg,Kling}).
\vspace{0.5ex}

Now suppose that the fundamental group 
$\pi_1(M^n)$ of $M^n$ is 
finite, and consider the universal covering 
$\pi\colon\hat M^n\to M^n$. Then 
$\hat M^n$ is compact. Moreover, 
the isometric immersion $\hat f=f\circ\pi$ 
satisfies condition \eqref{1}. Therefore, by 
the preceding argument, $\hat M^n$ is locally irreducible. 
Furthermore, either $\hat M^n$ is diffeomorphic 
to $\Sf^n$, or equality holds in \eqref{1} 
at every point, $\hat f$ is totally geodesic 
and $\hat M^n$ is isometric to a compact 
rank-one symmetric space of non-constant 
sectional curvature. 

In the former case, $M^n$ is diffeomorphic 
to a spherical space form. In the latter 
case, equality holds in \eqref{1} at every 
point, and $M^n=\hat M^n/\Gamma$, where 
$\Gamma\subset{\rm{Isom}}(\hat M^n)$ is a 
finite group acting freely on the rank-one symmetric space 
$\hat M^n$ (see Theorem 2.3.16 in \cite{W}). 
The quaternionic projective space $\HP^{n/4}$ 
and the Cayley plane $\OP^2$ admit no nontrivial 
quotients, even topologically (see \cite{BR-GB} 
or \cite[p.~185]{Besse0}). 
The complex projective space $\CP^{n/2}$ endowed 
with the Fubini-Study metric has a unique $\Z_2$-quotient 
only if $n/2=2k-1$ is odd. In this case, the action arises from 
the anti-holomorphic involutive isometry 
$$
[z_0:z_1:\cdots:z_{2k-1}:z_{2k}]\mapsto[-z_1:\bar z_0
:\cdots:-z_{2k}:\bar z_{2k-1}], 
$$
which has no fixed points (see \cite[p.~135]{Besse0}). 
Hence $M^n$ is isometric to $\CP^{2k-1}/\Z_2$. 

Now suppose that $n$ is even and $\beta_2(M^n)>0$. 
Since $\hat M^n$ is locally irreducible, the same 
holds for $M^n$. Then the result follows from 
Theorem \ref{ceven}. 
\qed

\subsection{Submanifolds with infinite fundamental group}

This section is devoted to the study of submanifolds 
with infinite fundamental group that satisfy the 
pinching condition \eqref{1}. More precisely, 
we prove the following theorem.

\begin{theorem}\label{p1ifn5}
Let $f\colon M^n\to\accentset{\sim}{M}^{n+m}$, 
$n\geq 5$, be an isometric immersion of a 
compact Riemannian manifold with infinite 
fundamental group. Assume that the inequality 
\eqref{1} is satisfied and 
$\accentset{\sim}{K}^f_{\min}\geq0$ at every 
point. Then one of the following holds: 
\vspace{1ex}

\noindent $(i)$ The universal cover of $M^n$ 
is isometric to a Riemannian product 
$\R\times N$, where $N$ is diffeomorphic to 
$\Sf^{n-1}$ and has nonnegative isotropic 
curvature.
\vspace{1ex}

\noindent $(ii)$ Equality holds in \eqref{1}, 
$
\accentset{\sim}{K}^f_{\min}=\accentset{\sim}{K}^f_{\max}=0
$ 
at every point, and one of the following occurs: 
\vspace{0.5ex}
\begin{enumerate}[topsep=1pt,itemsep=1pt,partopsep=1ex,parsep=0.5ex,leftmargin=*, label=(\roman*), align=left, labelsep=-0.5em]
\item [(a)] The manifold $M^n$ is a quotient 
$(\R^2\times\Sf^{n-2}(r))/\Gamma$, where 
$\Gamma$ is a discrete, fixed-point-free, cocompact 
subgroup of the isometry group of the standard 
Riemannian product $\R^2\times\Sf^{n-2}(r)$. 
Moreover, $f$ has index of relative nullity $2$, 
and $\delta=n\mathcal{H}_f/(n-2)$ 
is a Dupin principal normal vector field of $f$ with 
multiplicity $n-2$, satisfying $E_\delta=\mathcal D^\perp_f$.
\item [(b)] 
$M^n$ is flat, and $f$ is totally geodesic. 
\end{enumerate}
\end{theorem}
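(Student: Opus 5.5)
We pass to the Riemannian universal cover $\pi\colon\hat M^n\to M^n$ and work with $\hat f=f\circ\pi$, which again satisfies \eqref{1} with $\accentset{\sim}{K}^{\hat f}_{\min}\geq0$. By Proposition \ref{propu}, $M^n$ has nonnegative isotropic curvature. The first step is to show that $M^n$ is locally reducible. If it were locally irreducible, Theorem \ref{acta} would apply. Cases $(i)$ and $(ii)$ of that theorem have finite fundamental group. In case $(iii)$, the universal cover is a compact symmetric space, so $\pi_1(M^n)$ would be finite as well. All three contradict our assumption. Hence $M^n$ is locally reducible, and Theorem \ref{duke} describes $\hat M^n$. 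Since $\pi_1(M^n)$ is infinite and $M^n$ is compact, $\hat M^n$ is noncompact. In case $(a)$ this forces a Euclidean factor, $n_0\geq1$. In case $(b)$ it forces the surface $\varSigma^2$ to be noncompact.

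Next I run through the possible splittings. Case $(b)$ of Theorem \ref{duke} is a product with $n_1=2$ and $n_2=n-2$. Part $(ii)$ of Proposition \ref{propprod}, applied to $\hat f$, then gives that $\varSigma$ is flat. This contradicts $\min K_\varSigma<0$, so case $(b)$ is excluded.

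In case $(a)$ with $n_0\geq1$, I apply Lemma \ref{lemmaprod} to $\hat f$, treating $\R^{n_0}$ as one factor (or as $n_0$ one-dimensional factors). The lemma holds for any factor dimension; only Proposition \ref{propprod} needs $n_i\geq2$, so I will redo its counting argument with the inequality \eqref{min}. There are three subcases.
\begin{itemize}
\item If $n_0=1$ and the remaining factor is a single irreducible $N$ of dimension $n-1$, then $N$ is compact, has nonnegative isotropic curvature, and is simply connected. By the $(n-1)$-dimensional analogue (Theorem \ref{acta} for $n-1\geq4$, combined with the flag-curvature condition), $N$ should be diffeomorphic to $\Sf^{n-1}$. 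This yields alternative $(i)$.
\item If every factor has dimension $<n-2$, then the argument of part $(i)$ of Proposition \ref{propprod} gives $\accentset{\sim}{K}^f_{\min}=\accentset{\sim}{K}^f_{\max}=0$ and $f$ totally geodesic. By the Gauss equation $M^n$ is flat, and equality holds in \eqref{1}. This yields $(ii)$-$(b)$.
\item If some factor has dimension exactly $n-2$ and the complementary two-dimensional factor is $\R^2$ (or $\R\times\R$), then part $(ii)$ of Proposition \ref{propprod} gives $\accentset{\sim}{K}^f_{\min}=\accentset{\sim}{K}^f_{\max}=0$, shows that $\R^2$ lies in the relative nullity, and shows that $\delta$ is Dupin with $T N_2\subset E_\delta$. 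It also shows that $N_2$ has constant curvature $\|\delta\|^2$. A compact simply connected $N_2$ must therefore be a round $\Sf^{n-2}(r)$, provided $\delta\neq0$. If $\delta=0$, we are back in the flat case $(ii)$-$(b)$. This yields $(ii)$-$(a)$, with $\Gamma=\pi_1(M^n)$ acting by deck transformations.
\end{itemize}

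The main obstacle is the bookkeeping in the reducible case when $n_0=1$. One must check that \eqref{min} genuinely rules out, or forces flatness for, splittings such as $\R\times N_1\times N_2$ and $\R\times N^{n-1}$ in which $N^{n-1}$ is reducible. Here one-dimensional factors contribute $n_i^2=1$, and the strict inequality $\sum n_i^2<n^2-4n+8$ has to be rechecked. I would handle this by regrouping all factors other than the largest one, exactly as in Proposition \ref{propprod}. The second point to check is that in the splitting $\R\times N$ the factor $N$ is really diffeomorphic to a sphere. For this I would invoke the finite-fundamental-group result, Theorem \ref{p1fn5}. It does not apply verbatim, because $N$ is not itself presented as a pinched submanifold; instead I would argue directly that $N$, having nonnegative isotropic curvature and being irreducible and simply connected, falls under Theorem \ref{acta}. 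The K\"ahler and symmetric alternatives there are then excluded, because on $\R\times N$ they would force equality in \eqref{1} together with the product structure of Proposition \ref{propprod}, which is incompatible with a Euclidean factor of dimension $1$.
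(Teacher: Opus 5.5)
\textbf{Verdict: there is a genuine gap, in the case $\hat M^n=\R\times N^{n-1}$, which is where the theorem has content.} Everything else works, and partly by a different route from the paper.

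\textbf{What works.} You do not split into ``positive isotropic curvature somewhere'' versus ``nowhere'', and you do not use the Micallef--Moore vanishing of $\pi_i$, $2\le i\le n/2$, to restrict the de Rham splitting. Instead you run the counting from Lemma~\ref{lemmaprod} uniformly, which is legitimate because the lemma holds with one-dimensional factors. This correctly disposes of every splitting whose factors all have dimension $<n-2$. For $\hat M^n=\R^n$ split into $n$ lines, it gives $\a_f=0$ and $\accentset{\sim}{K}^f_{\min}=\accentset{\sim}{K}^f_{\max}=0$ more directly than the paper's four-frame argument. There are two small omissions. The splitting $\R^{n-2}\times N_1^2$ must also be excluded; Proposition~\ref{propprod}(ii) does this, since $N_1\cong\Sf^2$ would have to be flat. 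And $\delta=0$ cannot occur in your third bullet, since a compact simply connected $N_2$ is not flat.

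\textbf{The gap.} Theorem~\ref{acta} applied to $N$ leaves the K\"ahler and symmetric alternatives. No intrinsic nonnegativity condition removes them: $\R\times\CP^k$ with the Fubini--Study metric has nonnegative isotropic curvature. Your proposed exclusion does not work. Proposition~\ref{propprod} does not apply here, and not only because $n_0=1$. Its sign argument needs every factor to have dimension $\le n-2$, whereas here the $N$-term in \eqref{pro} is $\|\Phi_1\|^2-\frac{n-1}{n-2}H_1^2$, which can be negative. Nor is equality in \eqref{1} incompatible with a one-dimensional Euclidean factor. For example, $\Sf^1(r)\times\Sf^{n-1}(s)\subset\Sf^{n+1}$ attains equality everywhere for a suitable ratio $s/r$.

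\textbf{The missing idea.} You dismissed it, but $N$ \emph{is} presented as a pinched submanifold, via the slice $f_1=\hat f\circ\sigma_1\colon N\to\accentset{\sim}{M}^{n+m}$ with $\sigma_1(y)=(t_0,y)$.
\begin{enumerate}
\item Write \eqref{pro} for $\R\times N$. The $\R$-factor contributes $\frac{n-3}{n-2}k_1^2$, where $k_1$ is the curvature of the curve $t\mapsto\hat f(t,y_0)$.
\item Use $\accentset{\sim}{K}^{\hat f}_{\min}\ge0$, the bound $\frac{(n-1)^2}{n-2}\le\frac{(n-1)^2}{n-3}$, and $\accentset{\sim}{K}^{f_1}_{\min}\ge\accentset{\sim}{K}^{\hat f}_{\min}$, $\accentset{\sim}{K}^{f_1}_{\max}\le\accentset{\sim}{K}^{\hat f}_{\max}$. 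This gives a chain of inequalities whose left end is $\ge0$ and whose right end is $\frac{16}{3}\big(\accentset{\sim}{K}^{f_1}_{\min}-\frac14\accentset{\sim}{K}^{f_1}_{\max}\big)+\frac{(n-1)^2}{n-3}H_1^2-S_1$.
\item Hence $f_1$ satisfies \eqref{1} in dimension $n-1$, and the finite-$\pi_1$ classification applies to $N$: Theorem~\ref{p1fn5}, or Theorem~\ref{n=4p1f} when $n=5$.
\item In every non-spherical alternative, equality holds for $f_1$, so the chain collapses. Then $k_1=0$, the mixed terms and $H_1$ vanish, and $\accentset{\sim}{K}^{\hat f}_{\min}=0$.
\item Now $S_1=-\frac43\accentset{\sim}{K}^{\hat f}_{\max}$ forces $S_1=\accentset{\sim}{K}^{\hat f}_{\max}=0$. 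By the Gauss equation $N$ is flat, which is impossible for a compact simply connected manifold.
\end{enumerate}
When $M^n$ has positive isotropic curvature somewhere, the paper also notes that $N$ does too, so Micallef--Moore together with Theorem~\ref{acta} gives $N\cong\Sf^{n-1}$.
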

\proof
By Proposition \ref{propu}, the manifold $M^n$ 
has nonnegative isotropic curvature. We claim 
that $M^n$ is locally reducible. Suppose, 
to the contrary, that $M^n$ is locally irreducible. 
Then one of the cases $(i)$-$(iii)$ in Theorem 
\ref{acta} applies. In each of these cases, 
the universal cover $\hat M^n$ is compact, 
which contradicts the assumption on the 
fundamental group. 

Hence, $M^n$ is locally reducible. It then follows 
from Theorem \ref{duke} that the universal cover 
$\hat M^n$ is isometric to a Riemannian product 
as in cases $(a)$ or $(b)$ of that theorem. 

We claim that case $(b)$ is excluded. Suppose, 
to the contrary, that $\hat M^n$ is isometric to a 
Riemannian product 
$(\varSigma^2,g_\varSigma)\times(N^{n-2},g_N)$,
where $\varSigma^2$ is a surface whose Gaussian 
curvature is negative at some point, and $(N,g_N)$ 
is a compact irreducible Riemannian manifold with 
positive sectional curvature. Clearly, the isometric 
immersion $\hat f=f\circ\pi$ satisfies condition \eqref{1}, 
where $\pi\colon\hat M^n\to M^n$ is the covering 
map. By part $(ii)$ of Proposition \ref{propprod}, 
it then follows that $(\varSigma^2,g_\varSigma)$ 
must be flat, which is a contradiction. 

Hence, $\hat M^n$ is isometric to a Riemannian 
product 
$$
(\R^{n_0},g_0)\times(N_1^{n_1},g_1)\times\cdots\times(N_r^{n_r},g_r),
$$ 
where $n_0\geq1, g_0$ is the flat Euclidean metric, and for each $1\leq i\leq r$, 
either $n_i=2$ and $N_i=\Sf^2$ has nonnegative Gaussian curvature, 
or else $n_i\geq3$ and $(N_i,g_i)$ is compact and irreducible. 
We distinguish two cases.
\vspace{0.5ex}

\indent\emph{Case I}. Suppose that there exists 
a point at which $M^n$ has positive isotropic 
curvature. Clearly, $r\geq1$ in this case. 
Since $M^n$ has nonnegative isotropic 
curvature, it follows from Remark (iv) in \cite{ses} 
that $M^n$ admits a metric of positive isotropic 
curvature. By the result of Moore and Micallef 
\cite{MM}, the higher homotopy 
groups of $M^n$ satisfy 
\be\label{homotop}
\pi_i(M^n)=0\quad\text{for all }\;2\leq i\leq n/2. 
\ee

First, we claim that $n_i>2$ for all $1\leq i\leq r$. 
Arguing by contradiction, and without loss of generality,
suppose that $n_1=2$. It is known (see Proposition 4.1 
in \cite{Hat}) that covering maps induce isomorphisms 
on all higher homotopy groups. Hence, from
$
\pi_2(\hat M^n)\cong\pi_2(M^n) 
$ 
and \eqref{homotop}, it follows that 
$
\pi_2(\hat M^n)=0.
$
On other other hand, we have
$$
\pi_2(\hat M^n)\cong\pi_2(N_1)\oplus\cdots\oplus \pi_2(N_r)
\cong\Z\oplus\pi_2(N_2)\oplus\cdots\oplus\pi_2(N_r),
$$
which is a contradiction. Hence, $n_i\geq3$ for all 
$1\leq i\leq r$.

Next, we claim that $r=1$. Suppose to the contrary that 
$r\geq2$. It is easy to see that $n_i\leq n/2$ for some 
$1\leq i\leq r$. Without loss of generality, we may 
suppose that $n_1\leq n/2$. Then, by 
\eqref{homotop}
$$
\pi_k(\hat M^n)\cong\pi_k(M^n)=0\quad\text{for all }\;2\leq k\leq n_1. 
$$
From 
$$
\pi_k(\hat M^n)\cong\pi_k(N_1)\oplus\cdots\oplus \pi_k(N_r),
$$
it follows that 
$$
\pi_k(N_1)=0\quad\text{for all }\; 2\leq k\leq n_1. 
$$
Since $N_1$ is simply connected, the above vanishing result 
together with the Hurewicz isomorphism theorem implies that 
the homology groups of $N_1$ satisfy
$$
H_k(N_1;\Z)=0\quad\text{for all }\; 1\leq k\leq n_1.
$$
This contradicts the fact that 
$H_{n_1}(N_1;\Z)\cong\Z$. 

Hence, $\hat M^n$ is isometric to a Riemannian 
product $(\R^{n_0},g_0)\times(N_1^{n_1},g_1)$, 
where $n_0\geq1$ and $n_1\geq3$. We claim that 
$n_1>n/2$. Suppose, to the contrary, that $n_1\leq n/2$. 
From \eqref{homotop} we have 
$$
\pi_k(\hat M^n)\cong\pi_k(M^n)=0\quad\text{for all }\; 2\leq k\leq n_1, 
$$
and thus 
$$
\pi_k(N_1)=0\quad \text{for }\; 2\leq k\leq n_1. 
$$
As above, this contradicts the fact that $H_{n_1}(N_1;\Z)\cong\Z$.

Therefore, $n_1>n/2$ and consequently $n_0\leq n/2$. We 
now claim that $n_1\geq n-2$. Suppose, to the contrary, that 
$n_1<n-2$. Then $n_0>2$, and part $(i)$ of Proposition 
\ref{propprod} applies to the isometric immersion $\hat f$. 
In particular, $\hat M^n$ is flat, contradicting the assumption 
that there exists a point at which $M^n$ has 
positive isotropic curvature.
Hence, $n_1\geq n-2$. 

We now show that 
$n_1= n-1$. Suppose, to the contrary, that 
$n_1=n-2$. In this case, part $(ii)$ of Proposition 
\ref{propprod} applies to the immersion $\hat f$, and 
consequently $N_1$ has constant nonnegative sectional 
curvature. This contradicts the fact that there exists a point 
at which $M^n$ has positive isotropic curvature. 
Therefore, $n_1=n-1$, and the compact, irreducible, simply 
connected Riemannian manifold $N_1$ has nonnegative 
isotropic curvature. Then $N_1$ fails into one of the cases 
$(i)$-$(iii)$ of Theorem \ref{acta}. On the other hand, by 
assumption, there exists a point where $N_1$ 
has positive isotropic curvature. By the result of 
Moore and Micallef \cite{MM}, $N_1$ is homeomorphic 
to $\Sf^{n-1}$. Therefore, $N_1$ must be diffeomorphic 
to $\Sf^{n-1}$.
\vspace{0.5ex}

\indent\emph{Case II}. Now suppose that $M^n$ has no 
point of positive isotropic curvature. Then, by part $(ii)$ 
of Proposition \ref{propu}, the inequality \eqref{1} holds 
as an equality everywhere. 

We distinguish two subcaces:
\vspace{0.5ex}

\indent\emph{Subcase $II_a$}.
Suppose that $r\geq1$. We claim that 
$n_i\geq n-2$ for some $0\leq i\leq r$. 
Suppose, to the contrary, that $n_i<n-2$ 
for all $0\leq i\leq r$. Then part $(i)$ of Proposition 
\ref{propprod} implies that $\hat M^n$ must be flat. 
This contradicts the fact that each $N_i$ is a compact 
and simply connected. Therefore, $r=1$ and 
$$
(n_0,n_1)=(n-2,2),\quad (n_0,n_1)=(2,n-2),\quad\text{or}\quad (n_0,n_1)=(1,n-1).
$$ 

The case $(n_0,n_1)=(n-2,2)$ cannot occur. Otherwise, 
part $(ii)$ of Proposition \ref{propprod} would imply that 
the surface $N_1$ is flat, contradicting the fact that $N_1$ is 
diffeomorphic to $\Sf^2$. 

Suppose that $n_1=n-2$. In this case, part $(ii)$ of Proposition 
\ref{propprod} applies to the immersion $\hat f$. Hence, 
$\accentset{\sim}{K}^f_{\min}=\accentset{\sim}{K}^f_{\max}=0$ 
at every point, and $N_1$ has constant nonnegative sectional 
curvature. Since $N_1$ is compact and simply connected, it must be 
isometric to a sphere $\Sf^{n-2}(R)$. Then $M^n$ is a quotient 
$(\R^2\times\Sf^{n-2}(R))/\Gamma$, where $\Gamma$ is a discrete, 
fixed-point-free, cocompact subgroup of the isometry group of the 
standard Riemannian product $\R^2\times\Sf^{n-2}(R)$ (see Theorem 
2.3.16 in \cite{W}). Moreover, $f$ has relative nullity distribution 
$\pi_*(T\R^2)$ and $\delta=n\mathcal{H}_f/(n-2)$ 
is a Dupin principal normal vector field of $f$ with 
$E_\delta= \pi_*(T\Sf^{n-2}(R))$. 

Now suppose that $n_1=n-1$; that is, $\hat M^n$ is isometric 
to a Riemannian product 
$
\R\times N_1^{n-1},
$ 
where $N_1$ is a compact, irreducible, simply connected 
manifold. Fix a point $x_0=(t_0,y_0)\in\hat M^n$, and 
consider the curve $c=\hat f\circ\sigma_0$ and the 
isometric immersion 
$$
f_1=\hat f\circ\sigma_1\colon N_1\to\accentset{\sim}{M}^{n+m},
$$ 
where $\sigma_0\colon\R\to\hat M^n$ and $\sigma_1\colon N_1\to\hat M^n$ 
denote the totally geodesic inclusions defined by 
$$
\sigma_0(t)=(t,y_0)\quad \text{and}\quad \sigma_1(y)=(t_0,y),\quad y\in N_1.
$$

Then Lemma \ref{lemmaprod} applies to the immersion $\hat f$, 
and \eqref{pro} simplifies to
\begin{align}\label{pro1}
\frac{(n-3)}{n-2}k_1^2&+
\frac{2(n-3)}{n-2}\sum_{\ell=1}^{n-1}\|\a_{\hat f}
\big(\sigma_0{_*}\frac{\partial}{\partial t},\sigma_1{_*}e_\ell\big)\|^2\nonumber\\
&\leq-\frac{4}{3}\accentset{\sim}{K}^{\hat f}_{\max}+
\frac{2(5n-13)}{3(n-2)}
\accentset{\sim}{K}^{\hat f}_{\min}+\frac{(n-1)^2}{n-2}H_1^2-S_1\nonumber\\
&\leq-\frac{4}{3}\accentset{\sim}{K}^{\hat f}_{\max}+
\frac{2(5n-13)}{3(n-2)}
\accentset{\sim}{K}^{\hat f}_{\min}
+\frac{(n-1)^2}{n-3}H_1^2-S_1,
\end{align} 
where $\{e_\ell\}_{\ell=1}^{n-1}$ is an orthonormal frame 
of $TN_1$. 
Since, $\accentset{\sim}{K}^{\hat f}_{\min}\geq0$ we have
\be\label{pro2}
-\frac{4}{3}\accentset{\sim}{K}^{\hat f}_{\max}+
\frac{2(5n-13)}{3(n-2)}
\accentset{\sim}{K}^{\hat f}_{\min}
\leq\frac{16}{3}\big(\accentset{\sim}{K}^{\hat f}_{\min}
-\frac{1}{4}\accentset{\sim}{K}^{\hat f}_{\max}\big).
\ee 
Hence, \eqref{pro1} and \eqref{pro2} yield 
\begin{align}\label{pro3}
\frac{(n-3)}{n-2}k_1^2+&
\frac{2(n-3)}{n-2}\sum_{\ell=1}^{n-1}\|\a_{\hat f}
\big(\sigma_0{_*}\frac{\partial}{\partial t},\sigma_1{_*}e_\ell\big)\|^2\nonumber\\
&\leq\frac{16}{3}\big(\accentset{\sim}{K}^{\hat f}_{\min}
-\frac{1}{4}\accentset{\sim}{K}^{\hat f}_{\max}\big)
+\frac{(n-1)^2}{n-3}H_1^2-S_1\nonumber\\
&\leq\frac{16}{3}\big(\accentset{\sim}{K}^{f_1}_{\min}
-\frac{1}{4}\accentset{\sim}{K}^{f_1}_{\max}\big)
+\frac{(n-1)^2}{n-3}H_1^2-S_1. 
\end{align} 

It follows from \eqref{pro3} that the immersion 
$f_1\colon N_1\to\accentset{\sim}{M}^{n+m}$
satisfies condition \eqref{1}. Then, by Theorem 
\ref{p1fn5}, either $N_1$ is diffeomorphic to 
$\Sf^{n-1}$, or $f_1$ is totally geodesic satisfying 
condition \eqref{1} with equality at every point and, 
$N_1$ is as described in part $(ii)$ of that theorem. 
We claim that the latter case cannot occur. Suppose 
otherwise. Then \eqref{pro1}-\eqref{pro3} hold as 
equalities at every point. Hence, the curve $c$ is a 
geodesic, and 
$
\accentset{\sim}{K}^{\hat f}_{\min}=
\accentset{\sim}{K}^{\hat f}_{\max}=0
$ 
everywhere. The Gauss 
equation then implies that $N_1$ is flat. This 
contradicts the fact that $N_1$ is compact and 
simply connected, thereby proving the claim that 
$N_1$ is diffeomorphic to $\Sf^{n-1}$. 
\vspace{0.5ex}

\indent\emph{Subcase $II_b$}.
Suppose that $r=0$, namely that $\hat M^n$ is isometric to $\R^n$. 
From part $(ii)$ of Proposition \ref{propu}, it follows that at 
each point $x\in M^n$ there exists an orthonormal $4$-frame 
$\{e_i\}_{i=1}^4$ such that all conditions \eqref{c1}-\eqref{c5} 
hold. 

The Gauss equation, together with \eqref{c1} 
and \eqref{c22}, implies that the sectional curvatures 
satisfy 
\begin{align*}
K(e_1\wedge e_2)&=\|\a_{11}\|^2+\accentset{\sim}{R}^f_{1221}(x),\quad K(e_3\wedge e_4)=\|\a_{44}\|^2+\accentset{\sim}{R}^f_{3443}(x),\\
K(e_1\wedge e_3)&=\|\a_{14}\|^2+
\frac{1}{3}\big(\accentset{\sim}{K}^f_{\max}(x)-\accentset{\sim}{K}^f_{\min}(x)\big),\\
K(e_1\wedge e_4)&=\|\a_{13}\|^2+
\frac{1}{3}\big(\accentset{\sim}{K}^f_{\max}(x)-\accentset{\sim}{K}^f_{\min}(x)\big). 
\end{align*}
Since $M^n$ is flat, the above equalities yield $\a_{13}=\a_{14}=0$ 
and $\accentset{\sim}{K}^f_{\max}(x)
=\accentset{\sim}{K}^f_{\min}(x)$. By the last two equations in \eqref{c1}, 
we also have $\a_{23}=\a_{24}=0$. 
Hence, 
$$
\|\a_{11}\|^2=\|\a_{44}\|^2=-\accentset{\sim}{K}^f_{\min}(x).
$$
Since by assumption $\accentset{\sim}{K}^f_{\min}\geq0$, 
we obtain $\accentset{\sim}{K}^f_{\min}(x)=0$ and 
$\a_{11}=\a_{44}=0$. It then follows from \eqref{c2} that 
$\delta=0$, and thus $f$ is totally geodesic. 
\qed
\vspace{1.5ex}

\noindent\emph{Proof of Theorem \ref{n5}:} 
The result follows directly from Theorems \ref{p1fn5} and 
\ref{p1ifn5}.\qed

\section{Proof of the result for four-submanifolds}

We now focus on four-dimensional submanifolds. 

\subsection{Geometry of 4-dimensional manifolds}
In this section, we collect some basic facts about four-dimensional 
geometry. For a detailed exposition of the subject, we refer 
the reader to \cite{LeB}. 

The bundle of 2-forms of any oriented four-dimensional 
Riemannian manifold $(M,\<\cdot,\cdot\>)$ decomposes as a direct sum 
$$
\Omega^2(M)=\Omega_+^2(M)\oplus\Omega_-^2(M)
$$
of the eigenspaces of the Hodge star operator 
$\ast\colon\Omega^2(M)\to\Omega^2(M)$. 
The sections of $\Omega_+^2(M)$ are called 
{\emph {self-dual 2-forms}}, whereas those of 
$\Omega_-^2(M)$ are called 
{\emph {anti-self-dual 2-forms}}. 
Accordingly, at any point $x\in M^n$, we have the splitting
$$
\Lambda^2T_xM=\Lambda_+^2T_xM\oplus\Lambda_-^2T_xM, 
$$
where $\Lambda_\pm^2T_xM$ 
are the eigenspaces of the Hodge star operator 
$
\ast\colon\Lambda^2T_xM\to\Lambda^2T_xM.
$ 
Both spaces $\Lambda_+^2T_xM$ and $\Lambda_-^2T_xM$ 
are $\B^{[2]}$-invariant (see Proposition 1 in \cite{smichig}). 
Hence, we obtain the corresponding decomposition 
$$
\B^{[2]}=\B_+^{[2]}\oplus\B_-^{[2]}. 
$$

Suppose now that $M$ is a compact, 
oriented Riemannian four-manifold. The Hodge theorem 
guarantees that every de Rham cohomology class on $M$ 
has a unique harmonic representative. In particular, the space 
$\mathscr H^2(M)$ of harmonic 2-forms decomposes as 
$$
\mathscr H^2(M)=\mathscr H^2_+(M)\oplus\mathscr H^2_-(M),
$$ 
where $\mathscr H^2_+(M)$ and $\mathscr H^2_-(M)$ denote 
the spaces of self-dual and anti-self-dual harmonic 2-forms, 
respectively. The dimensions of these subspaces, 
$$
\beta_\pm(M)=\dim\mathscr H^2_\pm(M),
$$ 
are oriented homotopy invariants of $M$. Their difference 
$
\sigma(M)=\beta_+(M)-\beta_-(M)
$ 
is the signature of $M$, 
while their sum equals the second Betti number 
$\beta_2(M)$ of the manifold. 

\subsection{Auxiliary results for 4-dimensional submanifolds}

We now state the following auxiliary results.

\begin{lemma}\label{nnic1}
Let $f\colon M^4\to\tilde M^{4+m}$ be an 
isometric immersion of an oriented four-dimensional 
Riemannian manifold $M^4$. Suppose that at a point 
$x\in M^4$, there exists an oriented 
orthonormal $4$-frame $\{e_i\}_{i=1}^4\subset T_xM$ 
such that \eqref{eq1}, \eqref{c1}-\eqref{c5} and 
\eqref{perp} hold at $x$. 
We consider the orthonormal basis 
$\{\eta_i\}_{i=1}^6$ of the space 
of 2-vectors $\Lambda^2T_xM$, defined by 
$$
\eta_i\in\Lambda_+^2T_xM,\quad\eta_{i+3}=\ast\eta_i, 
\quad \text{for }\; 1\leq i\leq 3,
$$
where 
\begin{align*}
\eta_1=\frac{1}{\sqrt{2}}(e_{12}+e_{34}),\quad 
\eta_2&=\frac{1}{\sqrt{2}}(e_{13}-e_{24}),\quad 
\eta_3=\frac{1}{\sqrt{2}}(e_{14}+e_{23}),
\end{align*}
and $e_{ij}=e_i\wedge e_j$. 
Then the following assertions hold at $x$:
\vspace{0.5ex}

\noindent $(i)$ The matrix of the Bochner-Weitzenb\"ock 
operator $\B^{[2]}=\B_+^{[2]}\oplus\B_-^{[2]}$ at the point $x$, with respect to the 
basis $\{\eta_i\}_{1\leq i\leq6}$, is given by the direct sum 
\be
\begin{bmatrix}
\mu^+_1&a^+_1&a^+_2\\
a^+_1&\mu^+_2&0\\
a^+_2&0&\mu^+_3&
\!\!\!\!\!\end{bmatrix}
\oplus
\begin{bmatrix}
\mu^-_1&a^-_1&a^-_2\\
a^-_1&\mu^-_2&0\\
a^-_2&0&\mu^-_3&
\!\!\!\!\!\end{bmatrix},\nonumber
\ee
where
\begin{align*}
\mu^\pm_1=\;&2(1\mp\varepsilon_x)\big(\frac{2}{3}(\accentset{\sim}{K}^f_{\max}(x)
-\accentset{\sim}{K}^f_{\min}(x))+\|\a_{13}\|^2+\|\a_{14}\|^2\big),\\
\mu^\pm_2=\;&\accentset{\sim}{R}^f_{1221}+\accentset{\sim}{R}^f_{3443}
+\frac{2}{3}\big((2\pm\varepsilon_x)\accentset{\sim}{K}^f_{\max}(x)
-(5\pm\varepsilon_x)\accentset{\sim}{K}^f_{\min}(x)\big)\\
\;&+\|\a_{11}-\a_{44}\|^2+4\|\a_{13}\|^2+2(1\pm\varepsilon_x)\|\a_{14}\|^2,\\
\mu^\pm_3=\;&\accentset{\sim}{R}^f_{1221}+\accentset{\sim}{R}^f_{3443}
+\frac{2}{3}\big((2\mp\varepsilon_x)\accentset{\sim}{K}^f_{\max}(x)
-(5\mp\varepsilon_x)\accentset{\sim}{K}^f_{\min}(x)\big)\\
\;&+\|\a_{11}-\a_{44}\|^2+4\|\a_{14}\|^2+2(1\pm\varepsilon_x)\|\a_{13}\|^2,\\
a^\pm_1=\;&-(\varepsilon_x\mp1)\<\a_{14},\a_{44}-\a_{11}\>,
\;\,a^\pm_2=(\varepsilon_x\mp1)\<\a_{13},\a_{44}-\a_{11}\>, 
\end{align*}
with $\varepsilon_x=1$ or $\varepsilon_x=-1$.
\vspace{1ex}

\noindent $(ii)$ If $\ker \B_\pm^{[2]}\neq 0$ at $x$, then 
either $\varepsilon_x=\pm1$, or, in the case 
$\varepsilon_x=\mp1$, the following conditions hold:
\begin{align}
\;\accentset{\sim}{K}^f_{\max}(x)
&=\accentset{\sim}{K}^f_{\min}(x)\quad\text{and}\quad\a_{13}=0\quad\text{or}\quad \a_{14}=0,\tag{ii1}\label{ii1}\\
\a_{44}-\a_{11}&=2\rho\,\a_{13}\quad\text{for some}\quad \rho\in\R\quad\text{if}\quad \a_{13}\neq0,\tag{ii2}\label{ii2}\\
\a_{44}-\a_{11}&=2\rho\,\a_{14}\quad\text{for some}\quad \rho\in\R\quad \text{if}\quad \a_{14}\neq0.\tag{ii3}\label{ii3}
\end{align}
\end{lemma}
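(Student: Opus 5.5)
Part $(i)$ is a direct computation from formula \eqref{WB}. For $n=4$ and the basis $\{e_i\}$, the diagonal entry of $\B^{[2]}$ on $e_i\wedge e_j$ equals
\[
\Ric(e_i)+\Ric(e_j)-2R_{ijji}=\sum_{k\neq i,j}(R_{ikki}+R_{jkkj}).
\]
The off-diagonal entries are built from the mixed Ricci terms $\Ric(e_i,e_k)$ and the components $R_{ijk\ell}$. For each such quantity we write the Gauss equation
\[
R_{ijk\ell}=\accentset{\sim}{R}^f_{ijk\ell}+\<\a_{i\ell},\a_{jk}\>-\<\a_{ik},\a_{j\ell}\>
\]
and then substitute three sets of data:
\begin{itemize}
\item the structure \eqref{c1} of the second fundamental form, namely $\a_{22}=\a_{11}$, $\a_{33}=\a_{44}$, $\a_{12}=\a_{34}=0$, $\a_{23}=-\varepsilon\a_{14}$, $\a_{24}=\varepsilon\a_{13}$;
\item the normalisation \eqref{perp}, $\<\a_{13},\a_{14}\>=0$, together with \eqref{c22}, which eliminates $\<\a_{11},\a_{44}\>$;
\item the restricted curvature data \eqref{c3}--\eqref{c5}: the mixed sectional curvatures all equal $\accentset{\sim}{K}^f_{\min}$, the terms $\accentset{\sim}{R}^f_{kiij}$ vanish, $\accentset{\sim}{R}^f_{1342}=\accentset{\sim}{R}^f_{1423}=\frac{\varepsilon}{3}(\accentset{\sim}{K}^f_{\max}-\accentset{\sim}{K}^f_{\min})$, and $\accentset{\sim}{R}^f_{1234}=-2\accentset{\sim}{R}^f_{1342}$.
\end{itemize}
The components $\accentset{\sim}{R}^f_{1221}$ and $\accentset{\sim}{R}^f_{3443}$ are not controlled by this data, so they remain as free terms in $\mu_2^\pm,\mu_3^\pm$.

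Once every entry is expressed in the $e_{ij}$ basis, we change to the basis $\eta_i$. The invariance of $\Lambda^2_\pm$ (Proposition 1 in \cite{smichig}) guarantees that the matrix is block diagonal. Inside each block we then need to check two things. First, the $(2,3)$ entry vanishes; it comes from $\<\a_{13},\a_{14}\>$, so it is killed by \eqref{perp}. Second, $a^\pm_{1,2}$ come out proportional to $(\varepsilon\mp1)\<\a_{14\text{ or }13},\a_{44}-\a_{11}\>$.

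The sign factors $1\mp\varepsilon$ come from the combination $R_{1331}+R_{1441}+R_{2332}+R_{2442}\pm2R_{1234}$. This combination equals zero for the sign $\varepsilon$ by \eqref{eq1}, while for the opposite sign it equals $-4\varepsilon R_{1234}$. That explains why $\mu_1^\pm$ vanishes exactly when $\pm=\varepsilon$. I expect this bookkeeping of signs and orientation to be the main obstacle, especially keeping track of $\pm$ versus $\varepsilon_x$ in $\mu_2^\pm,\mu_3^\pm$. I would organize the computation by first computing the $6\times6$ Gauss-equation matrix symbolically, and only then conjugating by the change of basis.

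For $(ii)$, suppose $\varepsilon_x=\mp1$. By Proposition \ref{propeven}$(i)$ the operator $\B^{[2]}$ is nonnegative, so $\B^{[2]}_\pm$ is a nonnegative $3\times3$ matrix with nontrivial kernel. Now $\mu_1^\pm=4(\frac23(\accentset{\sim}{K}^f_{\max}-\accentset{\sim}{K}^f_{\min})+\|\a_{13}\|^2+\|\a_{14}\|^2)$. We split into cases.

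\emph{Case $\mu_1^\pm=0$.} Then $\accentset{\sim}{K}^f_{\max}=\accentset{\sim}{K}^f_{\min}$ and $\a_{13}=\a_{14}=0$, so \eqref{ii1}--\eqref{ii3} hold trivially.

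\emph{Case $\mu_1^\pm>0$.} A kernel vector $(x_1,x_2,x_3)$ satisfies $\mu_2 x_2=-a_1x_1$ and $\mu_3x_3=-a_2x_1$. Nonnegativity forces $x_2=0$ when $\mu_2=0$, and similarly for $\mu_3$. Singularity of the matrix then forces $\mu_1\mu_2\mu_3=a_1^2\mu_3+a_2^2\mu_2$, or the analogous identity when $\mu_2$ or $\mu_3$ vanishes. Using the explicit forms, with $\accentset{\sim}{K}^f_{\min}$ estimates on $\accentset{\sim}{R}^f_{1221}$ and $\accentset{\sim}{R}^f_{3443}$, we have $\mu_2\geq\|\a_{11}-\a_{44}\|^2+4\|\a_{13}\|^2+\dots$. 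Combining this with Cauchy--Schwarz on $a_1^2\leq 4\|\a_{14}\|^2\|\a_{44}-\a_{11}\|^2$ gives $\mu_1\mu_2\mu_3\geq a_1^2\mu_3+a_2^2\mu_2$. Equality in this chain forces the three conclusions: the $\frac23(\accentset{\sim}{K}^f_{\max}-\accentset{\sim}{K}^f_{\min})$ term vanishes, one of the cross products $\|\a_{13}\|^2\|\a_{14}\|^2$ vanishes, and Cauchy--Schwarz is an equality. The last gives the parallelism $\a_{44}-\a_{11}=2\rho\,\a_{13}$ (resp. $2\rho\,\a_{14}$), which is \eqref{ii1}--\eqref{ii3}.
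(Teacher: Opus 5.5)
Your proposal is correct and follows essentially the paper's route. Part $(i)$ is the same direct computation from \eqref{WB}, the Gauss equation, \eqref{c1}--\eqref{c5} and \eqref{perp}; your use of \eqref{c22} is legitimate, since under \eqref{c1}--\eqref{c5} it is equivalent to \eqref{eq1}. Part $(ii)$ is exactly the paper's argument: the identity $\det\B^{[2]}_\pm=\mu^\pm_1\mu^\pm_2\mu^\pm_3-\mu^\pm_2(a^\pm_2)^2-\mu^\pm_3(a^\pm_1)^2$, the lower bounds on $\mu^\pm_2,\mu^\pm_3$ coming from $\accentset{\sim}{R}^f_{1221},\accentset{\sim}{R}^f_{3443}\geq\accentset{\sim}{K}^f_{\min}$, Cauchy--Schwarz, and the equality analysis.

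The kernel-vector discussion, the case split and the appeal to nonnegativity of $\B^{[2]}$ are unnecessary, because that determinant identity holds for this matrix pattern regardless of whether $\mu^\pm_2$ or $\mu^\pm_3$ vanishes. Also, your claim that $\mu^\pm_2=0$ forces $x_2=0$ is wrong: nonnegativity then forces $a^\pm_1=0$, which makes $(0,1,0)$ itself a kernel vector.
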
 
\proof 
\noindent $(i)$ This follows from a straightforward 
computation using \eqref{WB}, the Gauss equation, 
\eqref{eq1}, \eqref{c1}-\eqref{c5}, and \eqref{perp}.
\vspace{1ex}

$(ii)$ Using part $(i)$, we obtain 
$$
\det\B_\pm^{[2]}=\mu^\pm_1\mu^\pm_2\mu^\pm_3-\mu^\pm_2(a_2^\pm)^2-\mu^\pm_3(a_1^\pm)^2.
$$
Hence, if $\ker \B_\pm^{[2]}\neq0$ at $x$, then
$$
\mu^\pm_1\mu^\pm_2\mu^\pm_3=\mu^\pm_2(a_2^\pm)^2+\mu^\pm_3(a_1^\pm)^2,
$$
or equivalently 
\begin{align*}
(1\mp\varepsilon_x)&\Big(\mu^\pm_2\mu^\pm_3\big(\frac{2}{3}(\accentset{\sim}{K}^f_{\max}(x)
-\accentset{\sim}{K}^f_{\min}(x))+\|\a_{13}\|^2+\|\a_{14}\|^2\big)\\
&-\mu^\pm_2\<\a_{13},\a_{44}-\a_{11}\>^2-\mu^\pm_3\<\a_{14},\a_{44}-\a_{11}\>^2\Big)
=0.
\end{align*}

Suppose now that $\varepsilon_x=\mp1$. Then 
\begin{align}\label{zero}
\mu^\pm_2\big(\mu^\pm_3\|\a_{13}\|^2-\<\a_{13},\a_{44}-&\a_{11}\>^2\big)
+\mu^\pm_3\big(\mu^\pm_2\|\a_{14}\|^2-\<\a_{14},\a_{44}-\a_{11}\>^2\big)
\nonumber\\
&+\frac{2}{3}\mu^\pm_2\mu^\pm_3\big(\accentset{\sim}{K}^f_{\max}(x)
-\accentset{\sim}{K}^f_{\min}(x)\big)=0.
\end{align}
Observe that 
\begin{align}
\mu^\pm_2\geq&\frac{2}{3}(2\pm\varepsilon_x)(\accentset{\sim}{K}^f_{\max}(x)
-\accentset{\sim}{K}^f_{\min}(x))+\|\a_{44}-\a_{11}\|^2\nonumber\\
&+4\|\a_{13}\|^2+2(1\pm\varepsilon_x)\|\a_{14}\|^2,\label{mu1}\\
\mu^\pm_3\geq&\frac{2}{3}(2\mp\varepsilon_x)(\accentset{\sim}{K}^f_{\max}(x)
-\accentset{\sim}{K}^f_{\min}(x))+\|\a_{44}-\a_{11}\|^2\nonumber\\
&+4\|\a_{14}\|^2+2(1\pm\varepsilon_x)\|\a_{13}\|^2.\label{mu2}
\end{align}
\vspace{2ex}
\vspace*{-\baselineskip}
Using these two inequalities and the Cauchy-Schwarz 
inequality, we obtain 
\begin{align}
&\mu^\pm_2\big(\mu^\pm_3\|\a_{13}\|^2-\<\a_{13},\a_{44}-\a_{11}\>^2\big)
+\mu^\pm_3\big(\mu^\pm_2\|\a_{14}\|^2-\<\a_{14},\a_{44}-\a_{11}\>^2\big)
\nonumber\\
&\geq\mu^\pm_2\|\a_{13}\|^2\big(\mu^\pm_3-\|\a_{44}-\a_{11}\|^2\big)+\mu^\pm_3\|\a_{14}\|^2\big(\mu^\pm_2-\|\a_{44}-\a_{11}\|^2\big)\nonumber\\
&\geq\mu^\pm_2\|\a_{13}\|^2\big(\frac{2}{3}(2\mp\varepsilon_x)(\accentset{\sim}{K}^f_{\max}(x)
-\accentset{\sim}{K}^f_{\min}(x))+4\|\a_{14}\|^2+2(1\pm\varepsilon_x)\|\a_{13}\|^2 \big)\nonumber\\
&+\mu^\pm_3\|\a_{14}\|^2\big(\frac{2}{3}(2\pm\varepsilon_x)(\accentset{\sim}{K}^f_{\max}(x)-\accentset{\sim}{K}^f_{\min}(x))+4\|\a_{13}\|^2+2(1\pm\varepsilon_x)\|\a_{14}\|^2\big)\nonumber\\
&=2\mu^\pm_2\|\a_{13}\|^2\big((\accentset{\sim}{K}^f_{\max}(x)
-\accentset{\sim}{K}^f_{\min}(x))+2\|\a_{14}\|^2\big)\nonumber\\
&+2\mu^\pm_3\|\a_{14}\|^2\big(\frac{1}{3}(\accentset{\sim}{K}^f_{\max}(x)
-\accentset{\sim}{K}^f_{\min}(x))+2\|\a_{13}\|^2\big).\label{CS}
\end{align}
Therefore, \eqref{zero} implies
\begin{align}
\mu^\pm_2&\mu^\pm_3\big(\accentset{\sim}{K}^f_{\max}(x)-\accentset{\sim}{K}^f_{\min}(x)\big)=0,\label{zero1}\\
\mu^\pm_2&\|\a_{13}\|^2\big((\accentset{\sim}{K}^f_{\max}(x)
-\accentset{\sim}{K}^f_{\min}(x))+2\|\a_{14}\|^2\big)=0,\label{zero2}\\
\mu^\pm_3&\|\a_{14}\|^2\big(\frac{1}{3}(\accentset{\sim}{K}^f_{\max}(x)
-\accentset{\sim}{K}^f_{\min}(x))+2\|\a_{13}\|^2\big)=0.\label{zero3}
\end{align}

The first equality in \eqref{ii1} follows from \eqref{mu1}, \eqref{mu2} 
and \eqref{zero1}. The remaining statements in \eqref{ii1}, follow 
again from \eqref{mu1}, \eqref{mu2}, \eqref{zero2}, and \eqref{zero3}. 
Finally, \eqref{ii2} and \eqref{ii3} follow from \eqref{ii1} and \eqref{CS}, 
which now holds as an equality. This concludes 
the proof of part $(ii)$. 
\qed
\vspace{1ex}

The following result will be used in the sequel.

\begin{theorem}\label{n=4beta}
Let $f\colon M^4\to\accentset{\sim}{M}^{4+m}$ be an 
isometric immersion of a compact, oriented four-dimensional 
Riemannian manifold satisfying the inequality \eqref{1} 
at every point. Suppose that $M^4$ has nowhere positive 
isotropic curvature. If $\beta_+(M^4)>0$ or 
$\beta_-(M^4)>0$, then one of the following holds: 
\vspace{1ex}

\noindent $(i)$ $M^4$ is a K\"ahler manifold 
equipped with a complex structure for which $f$ is 
$(2,0)$-geodesic. Moreover, if both $\beta_+(M^4)$ 
and $\beta_-(M^4)$ are positive, then 
$\accentset{\sim}{K}^f_{\min}=
\accentset{\sim}{K}^f_{\max}$ 
at every point.

\vspace{1ex}

\noindent $(ii)$ $M^4$ is flat with first Betti number $\beta_1(M^4)\geq2$, 
and $f$ is totally umbilical with 
$
\accentset{\sim}{K}^f_{\max}=
\accentset{\sim}{K}^f_{\min}=-H^2
$ 
at every point.
\end{theorem}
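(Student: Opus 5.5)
Assume without loss of generality that $\beta_+(M^4)>0$ (otherwise reverse the orientation, which interchanges $\Lambda^2_\pm$). Since \eqref{1} holds, Proposition \ref{propu} gives nonnegative isotropic curvature, which in dimension four is equivalent to $\B^{[2]}\geq0$. Hence a nonzero harmonic self-dual form $\omega$ is parallel by the Bochner--Weitzenb\"ock formula, has constant nonzero length, and satisfies $\<\B^{[2]}_+\omega,\omega\>=0$ everywhere; thus $\ker\B^{[2]}_+\neq0$ at every point. Because $M^4$ has nowhere positive isotropic curvature, at each $x$ Proposition \ref{propu}(ii) yields an oriented $4$-frame satisfying \eqref{eq1}, \eqref{c1}--\eqref{c5} and \eqref{perp}, so Lemma \ref{nnic1} applies. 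This leaves, at each point, the dichotomy of Lemma \ref{nnic1}(ii): either $\varepsilon_x=+1$, or $\varepsilon_x=-1$ and (ii1)--(ii3) hold.

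\emph{Kähler structure.} Set $J=\sqrt2\,\omega/\|\omega\|$, viewed as a skew endomorphism. A self-dual form of norm $\sqrt2/\!\sqrt2$ normalized this way squares to $-\mathrm{Id}$, and $J$ is parallel, so $(M^4,J)$ is Kähler compatible with the orientation. It remains to show that $f$ is $(2,0)$-geodesic, i.e.\ $\a_f(JX,JY)=-\a_f(X,Y)$. At points with $\varepsilon_x=+1$, with the paper's sign conventions the relations \eqref{c1} read $\a_{22}=\a_{11}$, $\a_{12}=0$, $\a_{23}=-\a_{14}$, $\a_{24}=\a_{13}$, $\a_{44}=\a_{33}$, $\a_{34}=0$. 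These say exactly that $\a_f$ is $J_0$-invariant (type $(1,1)$) for the complex structure $J_0$ with $e_1\mapsto e_2$, $e_3\mapsto e_4$, dual to $\eta_1$.

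The main obstacle is to identify $J_0$ with $J$, i.e.\ to show $\omega\propto\eta_1$. I would compute $\ker\B^{[2]}_+$ from the matrix in Lemma \ref{nnic1}(i). When $\varepsilon_x=1$ one has $\mu_1^+=0$, $a_1^+=a_2^+=0$ and $\mu_2^+,\mu_3^+>0$ generically, so the kernel is spanned by $\eta_1$. In degenerate cases where $\mu_2^+$ or $\mu_3^+$ vanish, one should instead get strong restrictions: $\a_{13}=\a_{14}=0$, $\a_{11}=\a_{44}$, and $\accentset{\sim}{K}^f_{\max}=\accentset{\sim}{K}^f_{\min}$. In those cases $f$ is umbilical at $x$ and is therefore $(1,1)$ for every orthogonal complex structure. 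At points with $\varepsilon_x=-1$ and $\ker\B_+\neq0$, conditions (ii1)--(ii3) say that $\a_{44}-\a_{11}$ is proportional to the single nonzero one among $\a_{13},\a_{14}$. I would then find the kernel vector explicitly: it lies in $\spa\{\eta_1,\eta_2\}$ or $\spa\{\eta_1,\eta_3\}$. I would rotate the frame within $V_1,V_2$ (as allowed in Proposition \ref{propu}) to a new frame for which \eqref{c1} holds with $\varepsilon=+1$ and the kernel vector becomes $\eta_1$. If that fails, I would show directly that $\a_f$ is $J$-invariant using these proportionality relations.

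\emph{Both $\beta_\pm>0$ and the flat alternative.} If $\beta_-(M^4)>0$ too, the same argument gives a parallel anti-self-dual complex structure $J'$, with $\ker\B_-\neq0$ everywhere. At each point one of $\varepsilon_x=\pm1$ forces the $\mp$ branch of Lemma \ref{nnic1}(ii) for the other sign, so (ii1) gives $\accentset{\sim}{K}^f_{\max}=\accentset{\sim}{K}^f_{\min}$ everywhere, which is the extra claim in (i). Alternative (ii) should arise when both $J$ and $J'$ are parallel. Then $\Lambda^2$ contains parallel self-dual and anti-self-dual forms, and the holonomy reduces to a product of two planes, so $M$ is locally reducible $2\times2$ or flat. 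Here I would combine the forced umbilicity from the degenerate kernel computations with the Gauss equation: umbilical $f$ with $\accentset{\sim}{K}^f_{\min}=\accentset{\sim}{K}^f_{\max}$ gives $K=\accentset{\sim}{K}^f+H^2$, and vanishing of $\B^{[2]}$ on a two-dimensional space forces $K\equiv0$. This yields $\accentset{\sim}{K}^f=-H^2$. Bochner's theorem for flat compact manifolds, together with the existence of parallel $1$-forms coming from the product splitting, gives $\beta_1\geq2$. I expect the kernel bookkeeping in the degenerate and mixed-sign cases to require the most care.
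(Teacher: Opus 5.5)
\textbf{Gap: $(2,0)$-geodesy at points with $\varepsilon_x=-1$.} You never verify that $f$ is $(2,0)$-geodesic at points where $\varepsilon_x=-1$ and one of $\alpha_{13},\alpha_{14}$ is nonzero, and that is the heart of the argument. (Note also that $(2,0)$-geodesic means $\alpha_f(JX,JY)=\alpha_f(X,Y)$, not $-\alpha_f(X,Y)$; your $J_0$-check already uses the correct sign.)

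Your main plan there cannot work. Rotating the frame inside $V_1=\spa\{e_1,e_2\}$ and $V_2=\spa\{e_3,e_4\}$ fixes $e_1\wedge e_2$ and $e_3\wedge e_4$, hence fixes $\eta_1$. By the proof of Proposition~\ref{propu}, such rotations also preserve \eqref{c1} with the same $\varepsilon$. But at these points the kernel of $\mathcal B^{[2]}_+$ is not $\eta_1$. Your fallback, ``show directly,'' is exactly the missing step.

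Here is what is needed. If $\alpha_{13}\neq0=\alpha_{14}$, then \eqref{ii1}--\eqref{ii2} give $\accentset{\sim}{K}^f_{\max}=\accentset{\sim}{K}^f_{\min}$ and $\alpha_{44}-\alpha_{11}=2\rho\,\alpha_{13}$. Lemma~\ref{nnic1}$(i)$ then gives
\[
\mathcal B^{[2]}_+=4\|\alpha_{13}\|^2\begin{bmatrix}1&0&-\rho\\ 0&\rho^2+1&0\\ -\rho&0&\rho^2\end{bmatrix},
\]
whose kernel is spanned by $\rho\eta_1+\eta_3$. Hence
\[
Je_1=\pm(\rho e_2+e_4)/\sqrt{\rho^2+1},\qquad Je_3=\pm(-e_2+\rho e_4)/\sqrt{\rho^2+1}.
\]
By \eqref{c1} with $\varepsilon=-1$, $\alpha_f=\langle\cdot,\cdot\rangle\,\alpha_{11}+\langle B\cdot,\cdot\rangle\,\alpha_{13}$, where $B$ has the two eigenplanes $\spa\{e_1+\lambda e_3,\,e_2-\lambda e_4\}$ with $\lambda^2-2\rho\lambda-1=0$. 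One checks that $J$ preserves both eigenplanes, which gives $J$-invariance of $\alpha_f$. The case $\alpha_{14}\neq0$ is analogous, and the paper carries this out via explicit formulas for $J_x$.

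Equivalently, since $\accentset{\sim}{K}^f_{\max}=\accentset{\sim}{K}^f_{\min}$ at such points, a suitably oriented eigenframe of $B$ is an admissible frame. In it \eqref{c1} holds with $\varepsilon=+1$ and $\alpha_{13}=\alpha_{14}=0$, which is the re-framing you wanted. It is just not reached by rotations in $V_1,V_2$. Without one of these computations, the proof of $(i)$ is incomplete.

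\textbf{What works, and what to drop.} Your pointwise observation that a self-dual form of norm $\sqrt2$ is an orthogonal complex structure is correct. It is simpler than the paper's route, which proves orthogonality of $J$ by parallel transport from a point where $J$ is known explicitly and so must split into the cases $M_1\neq\emptyset$, $\mu_1^+>0$ somewhere, and $\mu_1^+\equiv0$.

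It also means that, once the gap is closed, alternative $(i)$ holds in every case: where $f$ is umbilical, $(2,0)$-geodesy is trivial. So $(ii)$ never has to be derived, and your last paragraph should be discarded. As written it is also wrong.
\begin{itemize}
\item Parallel self-dual and anti-self-dual forms do not force umbilicity or flatness. The standard $\Sf^2(r)\times\Sf^2(\sqrt{R^2-r^2})\subset\Sf^5(R)$ has $\beta_+=\beta_-=1$ and nowhere positive isotropic curvature, yet it is neither flat nor umbilical.
\item The claim $\beta_1\geq2$ via parallel $1$-forms is unsupported. The paper instead uses $\chi(M^4)=0$ and Poincar\'e duality, $\beta_2=2\beta_1-2$.
\end{itemize}
Your argument for $\accentset{\sim}{K}^f_{\min}=\accentset{\sim}{K}^f_{\max}$ when both $\beta_\pm>0$ matches the paper's.
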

\proof 
Proposition \ref{propeven} implies that the 
Bochner-Weitzenb\"ock operator $\B^{[2]}$ is 
nonnegative. Moreover, it follows from part $(ii)$ 
of Proposition \ref{propu} that equality 
holds in \eqref{1} at every point. Furthermore, 
at each point $x\in M^4$, there exists an oriented 
orthonormal $4$-frame 
$\{e^x_i\}_{i=1}^4\subset T_xM$ such that 
the second fundamental form $\a_f$ of $f$ 
satisfies conditions \eqref{c1}-\eqref{c5}. 

We will treat only the case where $\beta_+(M^4)>0$, as 
the other case can be handled in a similar 
manner. There exists a nontrivial self-dual harmonic 
2-form $\omega_+$. By the Bochner-Weitzenb\"ock 
formula, the form $\omega_+$ is parallel, 
and 
$
\<\B^{[2]}(\omega_+),\omega_+\>=0
$ 
at every point. Hence, $\ker \B_+^{[2]}\neq0$ at every 
point, and part $(ii)$ of Lemma \ref{nnic1} applies. Since this form is parallel, 
we may normalize it (after multiplying by a 
constant, if necessary) so that $\|\omega_+\|=\sqrt{2}$. 
Let $Z$ be the dual to the self-dual 
form $\omega_+$. 

Observe that if furthermore $\beta_-(M^4)>0$, then 
$\ker\mathcal B_-^{[2]}\neq0$ at each point. It 
then follows from part $(ii)$ of Lemma \ref{nnic1} that 
$\accentset{\sim}{K}^f_{\min}=
\accentset{\sim}{K}^f_{\max}$ 
at every point.

We consider the subset 
$$
M_1=\left\{x\in M^4: \mu^+_1(x)=0\quad \text{and}
\quad \mu^+_2(x)\mu^+_3(x)>0\right\},
$$
where $\mu^+_1,\mu^+_2$, and $\mu^+_3$ are as 
given in Lemma \ref{nnic1}. It follows from part $(i)$ 
of Lemma \ref{nnic1} that 
\be\label{ker1+}
\ker\B_+^{[2]}(x)=
\spa\left\{e^x_1\wedge e^x_2+e^x_3\wedge e^x_4\right\}
\quad\text{for all }\; x\in M_1.
\ee

Clearly, for any $x\in M\smallsetminus M_1$, either
\be\label{00}
\mu^+_1(x)=0\quad \text{and}\quad \mu^+_2(x)\mu^+_3(x)=0,
\ee
or
\be\label{+0}
\mu^+_1(x)>0. 
\ee
It then follows from Lemma \ref{nnic1} that 
\be\label{min=max}
\accentset{\sim}{K}^f_{\min}(x)=\accentset{\sim}{K}^f_{\max}(x)\quad 
\text{for all }\; x\in M\smallsetminus M_1. 
\ee 
Moreover, at points $x\in M\smallsetminus M_1$ where 
\eqref{00} holds, the immersion $f$ is totally umbilical. 
In particular, \eqref{c22} yields
$\accentset{\sim}{K}^f_{\min}(x)=-H^2(x)\leq0$. 

At points $x\in M\smallsetminus M_1$ where \eqref{+0} 
holds, it follows again from part $(ii)$ of Lemma \ref{nnic1} 
that $\varepsilon_x=-1$ and that exactly one of the vectors 
$\a^x_{13}$ and $\a^x_{14}$ is nonzero. We claim that, 
for all points $x\in M\smallsetminus M_1$, the kernel of 
$\B_+^{[2]}(x)$ is given by 
\be\label{kerr}
\ker\B_+^{[2]}(x)=
\begin{cases}
\spa\left\{\rho_x\eta^x_1+\eta^x_3\right\}
\,&\text{if}\;\,\a^x_{13}\neq0,\\
\spa\left\{-\rho_x\eta^x_1+\eta^x_2\right\}
\,&\text{if}\;\,\a^x_{14}\neq0,
\end{cases}
\ee
where $\rho^x$ is the number appearing in 
\eqref{ii2}-\eqref{ii3}. 
Indeed, it follows from part $(i)$ 
of Lemma \ref{nnic1} that the matrix of the 
Bochner-Weitzenb\"ock operator $\B_+^{[2]}$ at 
$x$, with respect to the basis 
$\{\eta_i\}_{i=1}^3$ of $\Lambda_+^2T_xM$, 
is 
\be\label{ssf}
4\|\a^x_{13}\|^2\begin{bmatrix}
1&0&-\rho_x\\
0&\rho^2+1&0\\
-\rho_x&0&\rho_x^2&
\!\!\!\!\!\end{bmatrix}\quad
\text{or}\quad
4\|\a^x_{14}\|^2\begin{bmatrix}
1&\rho_x&0\\
\rho_x&\rho^2&0\\
0&0&\rho_x^2+1&
\!\!\!\!\!\end{bmatrix},
\ee
depending on whether $\a^x_{13}\neq0$ or $\a^x_{14}\neq0$, 
respectively. Therefore, \eqref{kerr} follows. 

Now we distinguish two cases.
\vspace{0.5ex}

\indent\emph{Case I}. Suppose that the subset $M_1$ 
is not empty, and fix a point $x_0\in M_1$. Because of 
\eqref{ker1+}, we may assume 
that 
$$
Z_{x_0}=e^{x_0}_1\wedge e^{x_0}_2+e^{x_0}_3\wedge e^{x_0}_4. 
$$
Consider the almost complex structure 
$J_{x_0}\colon T_{x_0}M\to T_{x_0}M$ defined by
$$
J_{x_0}e^{x_0}_1=e^{x_0}_2,\quad J_{x_0}e^{x_0}_3=e^{x_0}_4.
$$ 
Then, for all $v,w\in T_{x_0}M$, 
$$
\omega_+(v,w)=\<J_{x_0}v,w\>. 
$$

Define the skew-symmetric 
endomorphism $J$ of the tangent bundle 
of $M^4$ such that 
$$
\omega_+(X,Y)=\<JX,Y\>\quad\text {for all } X,Y\in\mathcal X(M). 
$$
Clearly, $J$ is parallel because $\omega_+$ is parallel. 

We claim that $J$ is orthogonal; that is, $\|J_xv\|=\|v\|$ 
for every point $x\in M^4$ and every $v\in T_xM$. Indeed, let 
$V$ be a parallel vector field along a curve $c\colon[0,1]\to M$ 
such that $c(0)=x,c(1)=x_0$ and $V(0)=v$. Then $W=JV$ is 
also parallel along $c$. Using the fact that $J_{x_0}$ is 
orthogonal, we obtain 
$$
\|J_xv\|=\|W(0)\|=\|W(1)\|=\|J_{x_0}V(1)\|=\|V(1)\|=\|V(0)\|=\|v\|,
$$
which proves the claim. Since $J$ is both 
skew-symmetric and orthogonal, it defines a parallel 
almost complex structure. Hence, the 
triple $(M^4,\<\cdot,\cdot\>,J)$ is a K\"ahler manifold. 

We now show that at every point $x\in M^4$, 
\be\label{Jx}
\a_f(J_xv,J_xw)=\a_f(v,w)\quad\text {for all } v,w\in T_xM.
\ee
To prove this, we consider two subcases.
\vspace{0.5ex}

\indent\emph{Subcase $I_a$}. Let $x$ be an arbitrary point 
in $M_1$. Since by \eqref{ker1+} the kernel of $\B_+^{[2]}$ 
at $x$ is spanned by the vector 
$e^x_1\wedge e^x_2+e^x_3\wedge e^x_4$, 
and since $\|Z_x\|=\sqrt{2}$, it follows that 
$$
Z_x=\pm\left(e^x_1\wedge e^x_2+e^x_3\wedge e^x_4\right).
$$ 
Because $Z$ is the dual of the self-dual 
form $\omega_+$, we have 
$$
\<J_xv,w\>=\<\<Z_x,v\wedge w\>\>\quad \text {for all } v,w\in T_xM. 
$$
Consequently, 
\be\label{JJ}
J_xe^x_1=\pm e^x_2, \quad J_xe^x_3=\pm e^x_4
\ee
at each point $x\in M_1$. 
We now claim that 
\be\label{Jij}
\a_f(J_xe^x_i,J_xe^x_j)=\a_{ij}\quad\text {for all } 1\leq i,j\leq4. 
\ee
If $\varepsilon_x=1$, this follows directly from \eqref{c1}. 
If $\varepsilon_x=-1$, then part $(i)$ of Lemma \ref{nnic1} 
implies that $\a^x_{13}=\a^x_{14}=0$. 
Therefore, \eqref{Jij} again follows from \eqref{c1}, and 
\eqref{Jx} follows from \eqref{Jij} by linearity.
\vspace{0.5ex}

\indent\emph{Subcase $I_b$}. 
We now consider the complementary case 
$x\in M\smallsetminus M_1$. If \eqref{00} holds 
at $x$, then $f$ is totally umbilical and hence 
\eqref{Jx} holds trivially. 

Suppose now that \eqref{+0} holds at $x$. 
In this case, exactly one of the vectors $\a^x_{13}$ 
and $\a^x_{14}$ vanishes, and \eqref{kerr} holds at $x$. 
By \eqref{kerr}, the kernel of $\B_+^{[2]}$ 
at $x$ is spanned by one of the vectors 
$\rho_x\eta^x_1+\eta^x_3,-\rho_x\eta^x_1+\eta^x_2$, 
and the matrix of $\B_+^{[2]}$ 
at $x$, with respect to the basis $\{\eta_i\}_{i=1}^3$ 
of $\Lambda_+^2T_xM$, is given by \eqref{ssf}. 
Since $\|Z_x\|=\sqrt{2}$, it follows that 
$$
Z_x=
\begin{cases}
\lambda_x(\rho_x\eta^x_1+\eta^x_3)
\,&\text{if}\;\,\a^x_{13}\neq0,\\
\lambda_x(-\rho_x\eta^x_1+\eta^x_2)
\,&\text{if}\;\,\a^x_{14}\neq0,
\end{cases}
$$ 
where the scalar $\lambda_x$ satisfies
\be\label{lam}
\lambda_x^2=\frac{2}{\rho_x^2+1}. 
\ee
Because $Z$ is the dual of the self-dual 
form $\omega_+$, we have 
$$
\<J_xv,w\>=\<\<Z_x,v\wedge w\>\>\quad \text {for all } v,w\in T_xM. 
$$

Using this relation together with \eqref{lam}, we obtain 
\begin{align*}
J_xe^x_1&=\frac{\lambda_x}{\sqrt{2}} \left(\rho_x\,e^x_2+e^x_4\right), 
\quad J_xe^x_2=\frac{\lambda_x}{\sqrt{2}} \left(-\rho_x\,e^x_1+e^x_3\right),\\
J_xe^x_3&=\frac{\lambda_x}{\sqrt{2}}\left(-e^x_2+\rho\,e^x_4\right), 
\quad J_xe^x_4=-\frac{\lambda_x}{\sqrt{2}} \left(e^x_1+\rho_x\,e^x_3\right),
\end{align*}
if $\a^x_{13}\neq0$, or 
\begin{align*}
J_xe^x_1&=\frac{\lambda_x}{\sqrt{2}}\left(-\rho_x\,e^x_2+e^x_3\right), 
\quad J_xe^x_2=\frac{\lambda_x}{\sqrt{2}}\left(\rho_x\,e^x_1-e^x_4\right),\\
J_xe^x_3&=-\frac{\lambda_x}{\sqrt{2}}\left(e^x_1+\rho\,e^x_4\right), 
\quad J_xe^x_4=\frac{\lambda_x}{\sqrt{2}} \left(e^x_2+\rho_x\,e^x_3\right),
\end{align*}
if $\a^x_{14}\neq0$. 
From these expressions, and combining \eqref{c1} with 
\eqref{ii2}-\eqref{ii3}, we obtain \eqref{Jij}. Hence \eqref{Jx} 
follows directly, and thus $f$ is $(2,0)$-geodesic. 

\iffalse
Finally we claim that 
$$
\accentset{\sim}{K}^f_{\min}(x)=\accentset{\sim}{K}^f_{\max}(x)\quad 
\text{for all } x\in M_1. 
$$
Since $(M^4,\<\cdot,\cdot\>,J)$ is a K\"ahler manifold, 
its curvature tensor satisfies
$$
\<R(JX,JY)Z,W\>=\<R(X,Y)Z,W\>\quad\text{for all } X,Y,Z,W\in TM. 
$$
Then the Gauss equation yields 
\begin{align}\label{RJ}
\<\a_f(JX,W),\a_f(JY,Z)\>&-\<\a_f(JX,Z),\a_f(JY,W)\>\nonumber\\
-\<\a_f(X,W),\a_f(Y,Z)\>&+\<\a_f(X,Z),\a_f(Y,W)\>\nonumber\\
=\<\accentset{\sim}{R}^f(X,&Y)Z,W\>-\<\accentset{\sim}{R}^f(JX,JY)Z,W\> 
\end{align}
for all $X,Y,Z,W\in TM$. Using \eqref{c1}, \eqref{c22}-\eqref{c4} 
and \eqref{JJ}, it follows from \eqref{RJ} for 
$X=e^x_1,Y=e^x_4,Z=e^x_2,W=e^x_3\in T_xM$ 
that 
$$
(1+\varepsilon_x)\big(\accentset{\sim}{K}^f_{\min}(x)
-\accentset{\sim}{K}^f_{\max}(x)\big)=
(1-\varepsilon_x)\|\a^x_{13}\|^2.
$$
This implies 
$
\accentset{\sim}{K}^f_{\min}(x)=\accentset{\sim}{K}^f_{\max}(x)
$
at points $x\in M_1$ with $\varepsilon_x=1$. The same 
holds at points where $\varepsilon_x=-1$ by part $(ii)$ 
of Lemma \ref{nnic1}. Together with \eqref{min=max}, 
this equality therefore holds at every point of $M^4$.
\fi 
\vspace{0.5ex}

\indent\emph{Case II}. Suppose that $M_1$ is empty. 
By \eqref{min=max}, we have 
$
\accentset{\sim}{K}^f_{\min}=\accentset{\sim}{K}^f_{\max}
$
at every point. Then, at each point $x\in M^4$, either \eqref{00} 
holds, in which case $f$ is totally umbilical, or \eqref{+0} holds. 
In the latter case, by part $(ii)$ of Lemma \ref{nnic1} exactly 
one of the vectors $\a^x_{13}$ and $\a^x_{14}$ 
vanishes and \eqref{kerr} holds at $x$. 

We distinguish two subcases:
\vspace{0.5ex}

\indent\emph{Case $II_a$}. 
Suppose that there exists a point $x_0\in M^4$ such 
that $\mu_1^+(x_0)>0$. Then exactly one of the vectors 
$\a^{x_0}_{13}$ and $\a^{x_0}_{14}$ vanishes, and 
\eqref{kerr} holds at $x_0$. 
The kernel of $\B_+^{[2]}$ 
at $x_0$ is spanned by one of the vectors 
$\rho_{x_0}\eta^{x_0}_1+\eta^{x_0}_3$, 
$-\rho_{x_0}\eta^{x_0}_1+\eta^{x_0}_2$, and the 
matrix of $\B_+^{[2]}$ at $x_0$, with respect to the 
basis $\{\eta^{x_0}_i\}_{i=1}^3$ of 
$\Lambda_+^2T_{x_0}M$, is given by \eqref{ssf}. 
Since $\|Z_{x_0}\|=\sqrt{2}$, it follows that 
$$
Z_{x_0}=
\begin{cases}
\lambda_{x_0}(\rho_{x_0}\eta^{x_0}_1+\eta^{x_0}_3)
\,&\text{if}\;\,\a^{x_0}_{13}\neq0,\\
\lambda_{x_0}(-\rho_{x_0}\eta^{x_0}_1+\eta^{x_0}_2)
\,&\text{if}\;\,\a^{x_0}_{14}\neq0,
\end{cases}
$$ 
where 
$$
\lambda_{x_0}^2=\frac{2}{\rho_{x_0}^2+1}. 
$$
Now, consider the almost complex structure 
$J_{x_0}\colon T_{x_0}M\to T_{x_0}M$ defined by
\begin{align*}
J_{x_0}e^{x_0}_1&=\frac{\lambda_{x_0}}{\sqrt{2}} \left(\rho_{x_0}\,e^{x_0}_2+e^{x_0}_4\right), 
\quad J_{x_0}e^{x_0}_2=\frac{\lambda_{x_0}}{\sqrt{2}} \left(-\rho_{x_0}\,e^{x_0}_1+e^{x_0}_3\right),\\
J_{x_0}e^{x_0}_3&=\frac{\lambda_{x_0}}{\sqrt{2}}\left(-e^{x_0}_2+\rho\,e^{x_0}_4\right), 
\quad J_{x_0}e^{x_0}_4=-\frac{\lambda_{x_0}}{\sqrt{2}} \left(e^{x_0}_1+\rho_{x_0}\,e^{x_0}_3\right),
\end{align*}
if $\a^{x_0}_{13}\neq0$, or 
\begin{align*}
J_{x_0}e^{x_0}_1&=\frac{\lambda_{x_0}}{\sqrt{2}}\left(-\rho_{x_0}\,e^{x_0}_2+e^{x_0}_3\right), 
\quad J_{x_0}e^{x_0}_2=\frac{\lambda_{x_0}}{\sqrt{2}} \left(\rho_{x_0}\,e^{x_0}_1-e^{x_0}_4\right),\\
J_{x_0}e^{x_0}_3&=-\frac{\lambda_{x_0}}{\sqrt{2}}\left(e^{x_0}_1+\rho\,e^{x_0}_4\right), 
\quad J_{x_0}e^{x_0}_4=\frac{\lambda_{x_0}}{\sqrt{2}} \left(e^{x_0}_2+\rho_{x_0}\,e^{x_0}_3\right),
\end{align*}
if $\a^{x_0}_{14}\neq0$. Then 
$$
\omega_+(v,w)=\<J_{x_0}v,w\>\quad\text{for all } v,w\in T_{x_0}M. 
$$

Define the skew-symmetric 
endomorphism $J$ of the tangent bundle 
of $M^4$ such that 
$$
\omega_+(X,Y)=\<JX,Y\>\quad\text {for all } X,Y\in\mathcal X(M). 
$$
Clearly, $J$ is parallel because $\omega_+$ is parallel. 
Arguing as in {\em{Case I}}, $J$ is orthogonal, 
and thus $(M^4,\<\cdot,\cdot\>,J)$ is a 
K\"ahler manifold. 

We now show that \eqref{Jx} holds at every point $x\in M^4$.
If $\mu_1^+(x)=0$, then $f$ is totally 
umbilical at $x$, so \eqref{Jx} holds trivially. 
If $\mu_1^+(x)>0$, then by part $(ii)$ of Lemma \ref{nnic1} 
$\varepsilon_x=-1$, exactly one of the vectors $\a^x_{13}$ 
and $\a^x_{14}$ vanishes, and \eqref{kerr} holds at $x$. 
Then 
$$
Z_x=
\begin{cases}
\pm\lambda_x(\rho_x\eta^x_1+\eta^x_3)
\,&\text{if}\;\,\a^x_{13}\neq0,\\
\pm\lambda_x(-\rho_x\eta^x_1+\eta^x_2)
\,&\text{if}\;\,\a^x_{14}\neq0,
\end{cases}
$$ 
where the number $\lambda_x$ satisfies \eqref{lam} 
and consequently, 
\begin{align*}
J_xe^x_1&=\pm\frac{\lambda_x}{\sqrt{2}} \left(\rho_x\,e^x_2+e^x_4\right), 
\quad J_xe^x_2=\pm\frac{\lambda_x}{\sqrt{2}} \left(-\rho_x\,e^x_1+e^x_3\right),\\
J_xe^x_3&=\pm\frac{\lambda_x}{\sqrt{2}}\left(-e^x_2+\rho\,e^x_4\right), 
\quad J_xe^x_4=\mp\frac{\lambda_x}{\sqrt{2}} \left(e^x_1+\rho_x\,e^x_3\right),
\end{align*}
if $\a^x_{13}\neq0$, or 
\begin{align*}
J_xe^x_1&=\pm\frac{\lambda_x}{\sqrt{2}}\left(-\rho_x\,e^x_2+e^x_3\right), 
\quad J_xe^x_2=\pm\frac{\lambda_x}{\sqrt{2}} \left(\rho_x\,e^x_1-e^x_4\right),\\
J_xe^x_3&=\mp\frac{\lambda_x}{\sqrt{2}}\left(e^x_1+\rho\,e^x_4\right), 
\quad J_xe^x_4=\pm\frac{\lambda_x}{\sqrt{2}} \left(e^x_2+\rho_x\,e^x_3\right),
\end{align*}
if $\a^x_{14}\neq0$. 

A direct computation using \eqref{ii2}-\eqref{ii3} 
shows that \eqref{Jij} holds at $x$, and hence \eqref{Jx} 
follows by linearity. Thus the immersion $f$ is 
$(2,0)$-geodesic. 

\vspace{0.5ex}

\indent\emph{Case $II_b$}. 
Suppose that $\mu_1^+=0$ everywhere. Then $f$ is totally 
umbilical, and 
$
\accentset{\sim}{K}^f_{\max}=\accentset{\sim}{K}^f_{\min}=-H^2
$
at every point. It follows from the Gauss equation that 
$M^4$ is flat. By the Gauss-Bonnet-Chern theorem, 
the Euler characteristic of $M^4$ vanishes. Poincar\'e 
duality then implies 
$$
\beta_2(M^4)=2\beta_1(M^4)-2.
$$
Since $\beta_2(M^4)>0$, we have $\beta_1(M^4)\geq2$. 
On the other hand, $\beta_1(M^4)\leq4$, with equality 
only if $M^4$ is the flat torus. Therefore, $M^4$ is one 
of the oriented Bieberbach four-manifolds with first 
Betti number $2\leq\beta_1(M^4)\leq4$. 
\qed

\subsection{Four-dimensional submanifolds with finite fundamental group}

\begin{theorem}\label{n=4p1f}
Let $f\colon M^4\to\accentset{\sim}{M}^{4+m}$ be an 
isometric immersion of a compact, oriented four-dimensional 
Riemannian manifold with finite fundamental group. 
Suppose that the inequality \eqref{1} is satisfied at every point. 
Then one of the following 
assertions holds:
\vspace{1ex}

\noindent $(i)$ $M^4$ is diffeomorphic 
to a spherical space form. 
\vspace{1ex}

\noindent $(ii)$
Equality holds in \eqref{1} at every point, and one 
of the following occurs: 
\vspace{0.5ex}
\begin{enumerate}[topsep=1pt,itemsep=1pt,partopsep=1ex,parsep=0.5ex,leftmargin=*, label=(\roman*), align=left, labelsep=-0.5em]
\item [(a)] $M^4$ is a K\"ahler manifold biholomorphic 
to the complex projective plane $\CP^2$, equipped with a 
complex structure for which $f$ 
is $(2,0)$-geodesic.
\item [(b)] $M^4$ is isometric 
to a Riemannian product $(N_1,g_1)\times(N_2,g_2)$, 
where each factor is diffeomorphic to $\Sf^2$, and 
either both factors have nonnegative Gaussian 
curvature, or one factor has positive Gaussian 
curvature while the other has negative curvature 
at some point. Moreover, $\accentset{\sim}{K}^f_{\min}
=\accentset{\sim}{K}^f_{\max}$ everywhere, and at 
each point where $f$ is not totally umbilical, 
there exist two distinct Dupin principal normals $\delta_1$ 
and $\delta_2$, both of multiplicity $2$, such that 
$$
\<\delta_1,\delta_2\>=-\accentset{\sim}{K}^f_{\min} 
\quad \text{and}\quad 
E_{\delta_i}=TN_i,\; i=1,2.
$$
\end{enumerate}
\end{theorem}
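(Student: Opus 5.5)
We first reduce to the simply connected case. If $\pi\colon\hat M^4\to M^4$ is the universal covering, then $\hat M^4$ is compact and oriented, and $\hat f=f\circ\pi$ satisfies \eqref{1}. Once the statement is proved for $\hat f$, we descend as in the proof of Theorem \ref{p1fn5}. If $\hat M^4$ is diffeomorphic to $\Sf^4$, then $M^4$ is a spherical space form. In the other two cases the conclusion has to pass to the quotient. For $\CP^2$, every orientation-preserving free action is trivial, by the Lefschetz fixed point theorem, since $\chi(\CP^2)=3$ and orientation-preserving self-maps act trivially on $H^*$. For a product of two spheres, the Euler characteristic $\chi=4$ forces the deck group to have order at most $2$. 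We must then exclude, or absorb into (b), a free $\Z_2$ action preserving orientation. Since $\chi(M)=\chi(\hat M)/|\Gamma|$ and $\chi(M)=2+\beta_2(M)\geq 2$ for finite $\pi_1$, such an action would give a quotient with $\chi=2$ and $\beta_2=0$. I expect this descent, and the bookkeeping showing the quotient is still isometric to a product as stated, to be one of the delicate points. From now on we assume $M^4$ is simply connected.

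\emph{Case I: positive isotropic curvature at some point.} By Remark (iv) in \cite{ses}, $M^4$ carries a metric of positive isotropic curvature. Micallef--Moore \cite{MM} then gives that $M^4$ is homeomorphic to $\Sf^4$. A topological sphere is not a product, so $M^4$ is locally irreducible. Theorem \ref{acta} then applies, and its case $(ii)$ is excluded by homeomorphism type. Hence $M^4$ is diffeomorphic to $\Sf^4$.

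\emph{Case II: nowhere positive isotropic curvature.} By Proposition \ref{propu}, equality holds in \eqref{1} everywhere. Suppose first that $M^4$ is locally reducible. Theorem \ref{duke} applies, and simple connectivity plus compactness leave only two options. One is $\Sf^2\times\Sf^2$ with two factors, either both of nonnegative curvature (case (a)) or as in case (b) of Theorem \ref{duke}. The other is $\R\times N^3$ or flat factors, which are excluded by compactness and simple connectivity. Proposition \ref{propprod}(iii) then yields the Dupin principal normals $\delta_i$, $\accentset{\sim}{K}^f_{\min}=\accentset{\sim}{K}^f_{\max}$, and $\<\delta_1,\delta_2\>=-\accentset{\sim}{K}^f_{\min}$. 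We still need $E_{\delta_i}=TN_i$ with $\delta_1\neq\delta_2$ wherever $f$ is not umbilical: if $\delta_1=\delta_2$ the immersion is umbilical at that point, so at non-umbilical points the normals are distinct and each $E_{\delta_i}$ has multiplicity exactly $2$. This is alternative (b).

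Suppose instead that $M^4$ is locally irreducible. Theorem \ref{acta} gives three possibilities: $M^4$ is diffeomorphic to $\Sf^4$; or $M^4$ is Kähler and biholomorphic to $\CP^2$; or $M^4$ is a compact symmetric space. In the Kähler case, $\beta_2=1$, so $\beta_+$ or $\beta_-$ is positive. Theorem \ref{n=4beta} then shows that $f$ is $(2,0)$-geodesic for a Kähler structure. Its alternative $(ii)$ is impossible, because a flat manifold is not simply connected and compact. Because $M^4$ is diffeomorphic to $\CP^2$, the complex structure produced is biholomorphic to $\CP^2$ by Yau's uniqueness (or as given). In the symmetric case, an irreducible compact simply connected $4$-dimensional symmetric space is $\Sf^4$ or $\CP^2$. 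Constant curvature is incompatible with nowhere positive isotropic curvature unless $M^4$ is flat, which is excluded. So $M^4=\CP^2$ with $\beta_2=1$, and Theorem \ref{n=4beta} again gives (a).

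The main obstacle is making sure that in the $\CP^2$ case the Kähler structure from Theorem \ref{n=4beta} is the one for which the biholomorphism holds. This needs a Kähler structure on a manifold diffeomorphic to $\CP^2$. I would argue it via Yau's theorem that any complex surface homotopy equivalent to $\CP^2$ is biholomorphic to it. A secondary obstacle is the finite quotient step for products of spheres described above.
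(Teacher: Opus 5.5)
Your simply connected argument follows the paper's proof. You split into isotropic curvature positive somewhere versus nowhere, then use Theorems \ref{acta}, \ref{duke}, Proposition \ref{propprod}$(iii)$ and Theorem \ref{n=4beta}. Your additions are correct: distinctness of $\delta_1,\delta_2$ at non-umbilical points, and Yau's theorem to identify the complex structure.

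\textbf{The gap.} It is the step you flag yourself in the descent. You never rule out a free orientation-preserving involution of $\hat M^4\cong\Sf^2\times\Sf^2$. Nor can it be absorbed into (b), since there both factors are simply connected. In fact it cannot be ruled out at all:
\begin{itemize}
\item The torus $\Sf^2(r)\times\Sf^2(s)\subset\Sf^5(R)$, with $r^2+s^2=R^2$, is preserved by $-\mathrm{Id}_{\R^6}$. This map acts on it freely as $(x,y)\mapsto(-x,-y)$, with degree $(-1)(-1)=1$.
\item The quotient $M^4$ is therefore compact and oriented, with $\pi_1(M^4)=\Z_2$, $\pi_2(M^4)\cong\Z\oplus\Z$, $\chi(M^4)=2$ and $\beta_2(M^4)=0$. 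This is exactly the case your Euler characteristic count leaves open.
\item $M^4$ embeds isometrically in $\RP^5(R)=\Sf^5(R)/\{\pm1\}$. Since \eqref{1} is pointwise, it holds there with equality: the principal curvatures $\lambda,\mu$ have multiplicity two and $\lambda\mu=-1/R^2$, so $S=2\lambda^2+2\mu^2=4/R^2+8H^2$.
\item Yet $M^4$ is not a spherical space form (since $\pi_2\neq0$), not $\CP^2$, and not a product of two spheres.
\end{itemize}
So no argument can close your gap. The statement needs either a further alternative listing such $\Z_2$-quotients, or a further hypothesis. For instance, under $\beta_2(M^4)>0$ your own count gives $|\pi_1(M^4)|=4/\chi(M^4)\leq 4/3$.

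\textbf{The paper's proof.} It has the same hole. It handles the descent by citing Theorem 4.10 of \cite{MW} in the form: $M^4$ carries a metric of positive isotropic curvature, or is diffeomorphic to $\Sf^2\times\varSigma^2$, or is K\"ahler and biholomorphic to $\CP^2$. It excludes the first alternative by $\pi_2$, and in the second uses orientability and finiteness of $\pi_1$ to get $\Sf^2\times\Sf^2$. The example above satisfies none of the three alternatives; the second would force $\varSigma^2=\RP^2$, contradicting orientability. So that trichotomy cannot apply to it as quoted. The same example also contradicts Theorem \ref{n=4}, since its case (d) requires either a flat $\R^2$ factor or $-\min K_{N_1}>0$.

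\textbf{A minor point.} In your $\CP^2$ descent, orientation-preserving maps need not act trivially on $H^2(\CP^2)$; complex conjugation acts by $-1$. However, the Lefschetz number $1+d+d^2$ of any self-map never vanishes. So your conclusion that $\pi_1(M^4)$ is trivial in that case still stands.
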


\proof
Proposition \ref{propu} implies that $M^4$ 
has nonnegative isotropic curvature. We begin 
by proving the theorem for simply connected 
submanifolds. The proof is divided into two cases.
\vspace{0.5ex}

\indent\emph{Case I}. Suppose that there exists 
a point at which $M^4$ has positive isotropic 
curvature. By Remark (iv) in \cite{ses}, 
$M^4$ admits a metric of positive isotropic 
curvature. By the result of Moore and Micallef 
\cite{MM}, $M^4$ must be homeomorphic to $\Sf^4$. 

Furthermore, $M^4$ is locally irreducible. Indeed, if this 
were not the case, Theorem \ref{duke} would 
imply that $M^4$ is isometric to a Riemannian 
product of two compact surfaces, which leads to 
a contradiction. Hence, $M^4$ is locally irreducible, 
and by Theorem \ref{acta}, one of the three cases 
described in that theorem must hold.

Since $M^4$ is homeomorphic to $\Sf^4$, case $(ii)$ 
is excluded. In case $(i)$, the manifold is clearly diffeomorphic 
to $\Sf^4$. Moreover, the only compact symmetric spaces 
of dimension four are the round sphere, the product of 
two 2-dimensional spheres, the complex projective plane, and 
the quaternionic projective line $\HP^1\cong \Sf^4$. 
Consequently, in case (iii), the manifold $M^4$ must also 
be diffeomorphic to $\Sf^4$. 
\vspace{0.5ex}

\indent\emph{Case II}. Suppose now that $M^4$ has no 
point of positive isotropic curvature. Then, by part $(ii)$ 
of Proposition \ref{propu}, the inequality \eqref{1} holds 
as an equality everywhere. 

Assume first that $M^4$ is locally irreducible; 
therefore $M^4$ is as described in Theorem \ref{acta}. 
In case $(i)$, $M^4$ is clearly diffeomorphic to $\Sf^4$. 
In case (ii)}, since $M^4$ is a 
K\"ahler manifold biholomorphic to $\CP^2$, it follows that 
$\beta_+(M)=1$ or $\beta_-(M)=1$. 
The result then follows from Theorem \ref{n=4beta}. 
In case (iii), $M^4$ is isometric to a compact symmetric 
space. Hence, $M^4$ is isometric to a sphere, or to 
$\CP^2$; in the latter case, the conclusion again 
follows from Theorem \ref{n=4beta}. 

Suppose now that $M^4$ is locally reducible. Then 
$M^4$ must be isometric to a Riemannian product as 
in one of the two cases in Theorem \ref{duke}. In either 
case, it follows from part $(iii)$ of Proposition \ref{propprod} 
that 
$
\accentset{\sim}{K}^f_{\min}=\accentset{\sim}{K}^f_{\max} 
$
at every point, and that at each point $x\in M^4$ where $f$ 
is not totally umbilical, there exist two distinct Dupin 
principal normals $\delta_1$ and $\delta_2$, 
both of multiplicity $2$, such that 
$$
\<\delta_1,\delta_2\>(x)=-\accentset{\sim}{K}^f_{\min}(x) 
\quad \text{and}\quad 
T_xM=E_{\delta_1}(x)\oplus E_{\delta_2}(x).
$$

In case $(a)$, $M^4$ is isometric to a Riemannian 
product $(N_1,g_1)\times(N_2,g_2)$, where each 
$N_i,i=1,2$, is diffeomorphic to $\Sf^2$ and has 
nonnegative Gaussian curvature. 
In case $(b)$, $M^4$ is isometric to a Riemannian product 
$(N_1,g_1)\times(N_2,g_2)$, where the Gaussian 
curvature of the surface $N_1$ is negative somewhere, 
while $N_2$ has positive Gaussian curvature. Since $M^4$ 
is simply connected, each factor is also simply connected, 
so that both are diffeomorphic to $\Sf^2$. Hence, the 
submanifold is as described in part $(b)$ of the statement 
of the theorem. This completes the proof of the theorem 
for simply connected submanifolds.
\vspace{0.5ex}

Now suppose that the fundamental group $\pi_1(M^4)$ 
of $M^4$ is finite. Consider the universal covering 
$\pi\colon\hat M^4\to M^4$. Since 
the fundamental group of $M^4$ is finite, 
$\hat M^4$ must be compact. Moreover, the 
isometric immersion $\hat f=f\circ\pi$ satisfies 
condition \eqref{1}. Therefore, 
by the preceding argument, we conclude that 
either $\hat M^4$ is diffeomorphic 
to $\Sf^4$, or the submanifold $\hat f$ 
is as described in part $(ii)$ of the statement 
of the theorem. 
\vspace{0.5ex}

Assume first that $\hat M^4$ is diffeomorphic 
to $\Sf^4$. Clearly, $M^4$ is 
locally irreducible. Then, one of the three cases 
in Theorem \ref{acta} must occur. Since $M^4$ is 
homeomorphic to $\Sf^4$, case $(ii)$ 
is excluded. Moreover, the only compact symmetric 
spaces of dimension four are the round sphere, the 
product of two 2-dimensional spheres, the complex 
projective plane, and the quaternionic projective 
line $\HP^1\cong\Sf^4$. Consequently, in all cases, 
the manifold $M^4$ must be diffeomorphic to a 
spherical space form. 
\vspace{0.5ex}

Suppose now that the submanifold $\hat f$ 
is as described in part $(ii)$ of of the statement 
of the theorem. In particular, $\hat M^4$ 
is diffeomorphic to the tours $\Sf^2\times\Sf^2$, 
or it is a K\"ahler manifold biholomorphic to $\CP^2$. 
By Theorem 4.10 in \cite{MW}, one of the following holds:
\begin{enumerate}[topsep=1pt,itemsep=1pt,partopsep=1ex,parsep=0.5ex,leftmargin=*, label=(\roman*), align=left, labelsep=-0.5em]
\item[(1)] $M^4$ carries a metric of positive isotropic 
curvature. 
\item[(2)] $M^4$ is diffeomorphic to a product 
$\Sf^2\times\varSigma^2$, where $\varSigma^2$ is a 
compact surface.
\item[(3)] $M^4$ is a K\"ahler manifold biholomorphic 
to $\CP^2$.
\end{enumerate} 

Case (1) cannot occur. Indeed, if $M^4$ carries a 
metric of positive isotropic curvature, then by \cite{MM} 
we have $\pi_2(M^4)=0$, which implies 
$
\pi_2(\hat M^4)\cong\pi_2(M^4)=0, 
$ 
a contradiction, since 
$\pi_2(\Sf^2\times\Sf^2)\cong\Z\oplus\Z$. 
Suppose that case (2) holds. Since $\pi_1(M^4)$ is 
finite, it follows that $M^4$ is diffeomorphic to
$\Sf^2\times\Sf^2$. Thus, the covering map 
$\pi\colon\accentset{\sim}{M}^4\to M^4$ 
is a global isometry, and the result follows. 
Finally, if $M^4$ is a K\"ahler manifold biholomorphic 
to $\CP^2$, then the result follows directly from the 
proof of the theorem in the simply connected case.
\qed

\subsection{Four-dimensional submanifolds with infinite fundamental group}
This section is devoted to the study of four-dimensional 
submanifolds with infinite fundamental group that 
satisfy the pinching condition \eqref{1}. More precisely, 
we prove the following theorem.

\begin{theorem}\label{n=4p1inf}
Let $f\colon M^4\to\accentset{\sim}{M}^{4+m}$ be an 
isometric immersion of a compact, oriented four-dimensional 
Riemannian manifold with infinite fundamental group. 
Suppose that the inequality \eqref{1} is satisfied at every point. 
Then one of the following 
assertions holds:
\vspace{1ex}

\noindent $(i)$ The universal cover of $M^4$ is 
isometric to a Riemannian product 
$\R\times N$, where $N$ is diffeomorphic to 
$\Sf^3$ and has nonnegative Ricci curvature. 
\vspace{1ex}

\noindent $(ii)$
Equality holds in \eqref{1},
$\accentset{\sim}{K}^f_{\min}=\accentset{\sim}{K}^f_{\max} $
everywhere, and one of the following occurs: 
\vspace{0.1ex}
\begin{enumerate}[topsep=1pt,itemsep=1pt,partopsep=1ex,parsep=0.5ex,leftmargin=*, label=(\roman*), align=left, labelsep=-0.5em]
\item [(a)] 
The universal cover of $M^4$ is 
isometric to a Riemannian product 
$(N_1,g_1)\times(N_2,g_2)$, where 
$N_2$ is diffeomorphic to $\Sf^2$, and either 
$(N_1,g_1)=(\R^2,g_0)$, where $g_0$ is the 
Euclidean flat metric, and $N_2$ has nonnegative 
Gaussian curvature, or 
the Gaussian curvatures of $(N_1,g_1)$ and 
$(N_2,g_2)$ satisfy
$\min K_{N_2}\geq-\min K_{N_1}>0$. 
Moreover, at any point where 
$f$ is not totally umbilical, there exist two 
distinct Dupin principal normals $\delta_1$ 
and $\delta_2$, both of multiplicity $2$, such that 
$$
\<\delta_1,\delta_2\>=-\accentset{\sim}{K}^f_{\min} 
\quad \text{and}\quad 
E_{\delta_i}=\pi_*TN_i,\; i=1,2,
$$
where $\pi$ is the universal covering map of $M^4$.
\item [(b)] $M^4$ is flat with first Betti number 
$2\leq\beta_1(M^4)\leq4$, and the immersion $f$ 
is totally umbilical, satisfying 
$
\accentset{\sim}{K}^f_{\min}=
\accentset{\sim}{K}^f_{\max}=-H^2
$ 
at every point. 
\end{enumerate}
\end{theorem}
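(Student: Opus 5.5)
We follow the scheme of the proof of Theorem \ref{p1ifn5}. Proposition \ref{propu} gives nonnegative isotropic curvature on $M^4$. If $M^4$ were locally irreducible, each alternative of Theorem \ref{acta} would make the universal cover compact, which is impossible because $\pi_1(M^4)$ is infinite. Hence $M^4$ is locally reducible, and by Theorem \ref{duke} the universal cover $\hat M^4$ falls into one of two cases. In case $(a)$ it is $\R^{n_0}\times N_1\times\cdots\times N_r$ with $n_0\geq1$, since $\hat M^4$ is noncompact. In case $(b)$ it is $\varSigma^2\times N^2$ with $\min K_N\geq-\min K_\varSigma>0$. Because $N$ is compact and simply connected, it is diffeomorphic to $\Sf^2$. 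The lifted immersion $\hat f=f\circ\pi$ again satisfies \eqref{1}. For dimensional reasons case $(a)$ leaves only three possibilities: $\R^4$, $\R^2\times N_2$ with $N_2\cong\Sf^2$ of nonnegative curvature, or $\R\times N_1^3$ with $N_1$ compact and irreducible.

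\emph{The products of two surfaces.} In the cases $\R^2\times N_2$ and $\varSigma^2\times N^2$, Proposition \ref{propprod}(iii) applied to $\hat f$ gives several conclusions. First, $\accentset{\sim}{K}^f_{\min}=\accentset{\sim}{K}^f_{\max}$. Second, there are Dupin principal normals $\delta_i$ with $\pi_*TN_i\subset E_{\delta_i}$ and $\<\delta_1,\delta_2\>=-\accentset{\sim}{K}^f_{\min}$; at non-umbilical points they are distinct, so each has multiplicity $2$ and $E_{\delta_i}=\pi_*TN_i$. Third, equality holds in \eqref{1}, by the converse part of that proposition. This is exactly alternative $(ii$-$a)$.

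\emph{The case $\R\times N_1^3$.} Here I would argue as in Subcase $II_a$ of Theorem \ref{p1ifn5}. Lemma \ref{lemmaprod} with $n=4$ should show that the slice immersion $f_1\colon N_1\to\accentset{\sim}{M}$ satisfies a pinching of the form $S_1\leq c\,(\accentset{\sim}{K}_{\min}\text{-terms})+\frac{9}{2}H_1^2$, i.e.\ the $3$-dimensional analogue of \eqref{1} with $n-2$ replaced by $n-3$ in the denominator. Since three-dimensional isotropic curvature is not available, I would instead use Gauss directly. Taking traces of \eqref{ineq1} over frames of the form $\{\partial_t,e_i,e_j,e_k\}$, and using that the $\partial_t$-sectional curvatures vanish, nonnegative isotropic curvature of $\R\times N_1$ is equivalent to nonnegative Ricci curvature of $N_1$. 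This gives $\Ric_{N_1}\geq0$. Since $N_1$ is compact and simply connected, Hamilton's theorem on $3$-manifolds with nonnegative Ricci curvature says $N_1$ is diffeomorphic to a spherical space form, or is a product locally; irreducibility and simple connectivity leave only $\Sf^3$. This yields alternative $(i)$, and notably it requires no equality assumption.

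\emph{The flat case $\hat M^4=\R^4$.} I would first show that equality holds in \eqref{1}. If some point had positive isotropic curvature, then, since the metric is flat, all isotropic curvatures vanish there, a contradiction. So Proposition \ref{propu}(ii) applies at every point. Using the Gauss computation from Subcase $II_b$ of Theorem \ref{p1ifn5}, flatness forces $\a_{13}=\a_{14}=0$, $\accentset{\sim}{K}^f_{\max}=\accentset{\sim}{K}^f_{\min}$, and $\|\a_{11}\|^2=\|\a_{44}\|^2=-\accentset{\sim}{K}^f_{\min}$. The condition \eqref{c2}, which reads $\a_{11}+\a_{44}=2\mathcal H_f$ when $n=4$, and \eqref{c22} then give $\<\a_{11},\a_{44}\>=-\accentset{\sim}{K}^f_{\min}$. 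Hence $\a_{11}=\a_{44}$, so $f$ is totally umbilical with $\accentset{\sim}{K}^f_{\min}=-H^2$. The bound $\beta_1\leq4$ comes from Bieberbach theory. The main obstacle is the lower bound $\beta_1\geq2$, because without it we only know that $M^4$ is a flat oriented Bieberbach manifold. I would try Theorem \ref{n=4beta}(ii) after showing $\beta_2>0$, and the plan is that $\beta_2>0$ should follow from $\chi=0$ and $\beta_1\geq1$. Here $\beta_1\geq1$ holds for every orientable flat $4$-manifold. Poincar\'e duality gives $\beta_2=2\beta_1-2$, which is positive only when $\beta_1\geq2$. So the real difficulty is excluding the oriented flat manifolds with $\beta_1=1$. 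I would try to rule these out by showing that the umbilical immersion with $\accentset{\sim}{K}^f=-H^2$ forces extra parallel forms, or else invoke the classification of orientable flat $4$-manifolds directly.
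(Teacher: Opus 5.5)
\textbf{The gap: the bound $\beta_1(M^4)\geq2$ is false.} The step you leave open in the flat case is a genuine gap. Neither of your remedies (extra parallel forms, or the classification of oriented flat $4$-manifolds) can close it, because the bound does not hold.

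Let $W$ be the Hantzsche--Wendt manifold, the orientable flat $3$-manifold with holonomy $\Z_2\oplus\Z_2$ and $H_1(W;\Z)\cong\Z_4\oplus\Z_4$, so $\beta_1(W)=0$. Then $M^4=\Sf^1\times W$ is a compact, oriented flat manifold with infinite fundamental group. By K\"unneth, $\beta_1(M^4)=1$ and $\beta_2(M^4)=0$.

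Take the totally geodesic inclusion $f\colon M^4\to M^4\times\R$, $f(x)=(x,0)$, into the flat product. It has $S=H=\accentset{\sim}{K}^f_{\min}=\accentset{\sim}{K}^f_{\max}=0$, so \eqref{1} holds with equality. Yet $M^4$ satisfies none of the alternatives. Its universal cover $\R^4$ is contractible, so it has no compact factor; hence it is neither $\R\times N$ with $N\cong\Sf^3$ nor a product with an $\Sf^2$ factor. And $\beta_1(M^4)=1<2$.

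The paper's own proof passes this point only by writing ``Since $\beta_2(M^4)>0$''. That is a hypothesis of Theorem \ref{n=4beta}, carried over from its Case $II_b$, and it is not available here. So the paper has the same gap. What your argument actually proves ($\chi(M^4)=0$, $\beta_2=2\beta_1-2\geq0$, plus the Bieberbach bound) is the corrected statement $1\leq\beta_1(M^4)\leq4$ in alternative $(ii$-$b)$. The same correction is needed in part $(iii$-$b)$ of Theorem \ref{n=4}.

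\textbf{The rest of your proposal.} Everything else is correct and follows the paper's proof closely, with two small organizational differences.

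First, the paper splits according to whether $M^4$ has a point of positive isotropic curvature. In that case it deforms to a metric of positive isotropic curvature and uses Micallef--Moore's $\pi_2(M^4)=0$ to exclude the surface-product cases. You instead treat those cases uniformly through Proposition \ref{propprod}$(iii)$ and its converse, which also yields equality in \eqref{1}.

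Second, in the $\R\times N_1^3$ case you obtain $\Ric_{N_1}\geq0$ directly from frames $F=\{\partial_t,e_2,e_3,e_4\}$, for which $K^c(\sigma_F)=\Ric_{N_1}(e_2,e_2)$. The paper reads this off from Theorem \ref{duke}. Both then apply Hamilton's theorem. Your detour through Lemma \ref{lemmaprod} there is not needed.
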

\proof
Proposition \ref{propu} implies that $M^4$ has 
nonnegative isotropic curvature. We first claim 
that $M^4$ is locally reducible. Suppose, 
to the contrary, that $M^4$ is locally irreducible. 
Then one of the cases $(i)$-$(iii)$ in Theorem 
\ref{acta} applies. In each of these cases, 
the universal cover $\hat M^4$ is compact, 
which contradicts the assumption that the 
fundamental group of $M^4$ is infinite. 

Hence, $M^4$ is locally reducible. It then follows 
from Theorem \ref{duke} that the universal cover 
$\hat M^4$ is isometric to a Riemannian product 
as in cases $(a)$ or $(b)$ of that theorem. 
The proof of the theorem is divided into two cases.
\vspace{0.5ex}

\indent\emph{Case I}. Suppose that there exists 
a point at which $M^4$ has positive isotropic 
curvature. Since $M^4$ has nonnegative isotropic 
curvature, it follows from Remark (iv) in \cite{ses} 
that $M^4$ admits a metric of positive isotropic 
curvature. By the result of Moore and Micallef 
\cite{MM}, we have $\pi_2(M^4)=0$, and thus 
$\pi_2(\hat M^4)=0$. Since $\pi_2(\Sf^2)\cong\Z$, 
the universal cover $\hat M^4$ cannot be 
isometric to a Riemannian product 
as in case $(b)$ of Theorem \ref{duke}. 

Hence, the universal cover $\hat M^4$ is 
isometric to a Riemannian product
$$
(\R^{n_0},g_0)\times(N_1^{n_1},g_1)\times(N_2^{n_2},g_2), 
$$
where $n_0\geq1, g_0$ is the flat Euclidean metric, and either 
$n_i=2$ and $N_i=\Sf^2, i=1,2$, has nonnegative Gaussian curvature, 
or $n_i=3$ and $N_i$ is compact with nonnegative Ricci 
curvature. 
The fact that $\pi_2(\hat M^4)=0$ implies that, $n_0=1$ and $n_1=3$. 
Consequently, $\hat M^4$ is isometric to a Riemannian 
product $\R\times N_1$, where $N_1$ is a compact, simply 
connected manifold with nonnegative Ricci curvature. 
By Theorem 1.2 in \cite{H}, $N_1$ is diffeomorphic to $\Sf^3$. 
\vspace{0.5ex}

\indent\emph{Case II}. Now suppose that $M^4$ has no 
point of positive isotropic curvature. Then part $(ii)$ of 
Proposition \ref{propu} implies that the inequality 
\eqref{1} holds as an equality everywhere.

Suppose first that case $(b)$ in Theorem \ref{duke} 
holds, namely that $\hat M^4$ is isometric to a Riemannian 
product of two surfaces $(N_1,g_1)\times(N_2,g_2)$, 
where $N_2$ is compact and the Gaussian curvatures 
of $(N_1,g_1)$ and $(N_2,g_2)$ satisfy
$\min K_{N_2}\geq-\min K_{N_1}>0$. 
Clearly, the isometric immersion $\hat f=f\circ\pi$ 
satisfies condition \eqref{1}, where $\pi\colon\hat M^4\to M^4$ 
is the covering map. It then follows from 
part $(iii)$ of Proposition \ref{propprod} that the submanifold 
is as described in part $(a)$ of the theorem. 

Assume now that case $(a)$ in Theorem \ref{duke} 
holds, namely the universal cover $\hat M^4$ is 
isometric to a Riemannian product
$$
(\R^{n_0},g_0)\times(N_1^{n_1},g_1)\times(N_2^{n_2},g_2), 
$$
where $n_0\geq1, g_0$ is the flat Euclidean metric, and either 
$n_i=2$ and $N_i=\Sf^2, i=1,2$, has nonnegative Gaussian 
curvature, or $n_i=3$ and $N_i$ is compact with 
nonnegative Ricci curvature. Then, $n_0=1,2,4$. 

If $n_0=1$, we argue as in \emph{Case I}, to 
conclude that $\hat M^4$ is isometric to a 
Riemannian product $\R\times N_1$, where 
$N_1$ is diffeomorphic to $\Sf^3$ with 
nonnegative Ricci curvature. 
If $n_0=n_1=2$, then it follows from 
part $(iii)$ of Proposition \ref{propprod} that the 
submanifold is as described in part $(a)$ 
of the theorem.

Suppose $n_0=4$, namely that $\hat M^4$ is isometric 
to $\R^4$. By the Gauss-Bonnet-Chern theorem, 
the Euler characteristic of $M^4$ vanishes. Poincar\'e 
duality then implies 
$$
\beta_2(M^4)=2\beta_1(M^4)-2.
$$
Since $\beta_2(M^4)>0$, it follows that $\beta_1(M^4)\geq2$. 
On the other hand, $\beta_1(M^4)\leq4$, with equality 
only if $M^4$ is the flat torus. Therefore, $M^4$ is one 
of the oriented Bieberbach four-manifolds with first 
Betti number $2\leq\beta_1(M^4)\leq4$. 

From part $(ii)$ of Proposition \ref{propu}, it follows that at 
each point $x\in M^4$ there exists an orthonormal $4$-frame 
$\{e_i\}_{i=1}^4$ such that all conditions \eqref{c1}-\eqref{c5} 
hold. 
The Gauss equation, together with \eqref{c1} and 
\eqref{c22}, implies that the sectional curvatures satisfy 
\begin{align*}
K(e_1\wedge e_2)&=\|\a_{11}\|^2+\accentset{\sim}{R}^f_{1221}(x),\quad
K(e_3\wedge e_4)=\|\a_{44}\|^2+\accentset{\sim}{R}^f_{3443}(x),\\
K(e_1\wedge e_3)&=\|\a_{14}\|^2+
\frac{1}{3}\big(\accentset{\sim}{K}^f_{\max}(x)-\accentset{\sim}{K}^f_{\min}(x)\big),\\
K(e_1\wedge e_4)&=\|\a_{13}\|^2+
\frac{1}{3}\big(\accentset{\sim}{K}^f_{\max}(x)-\accentset{\sim}{K}^f_{\min}(x)\big). 
\end{align*}
Since $M^4$ is flat, these equalities yield 
$\a_{13}=\a_{14}=0$ and $\accentset{\sim}{K}^f_{\max}(x)
=\accentset{\sim}{K}^f_{\min}(x)$. 
By the last two equations in \eqref{c1}, we also have 
$\a_{23}=\a_{24}=0$. Hence, 
$$
\|\a_{11}\|^2=\|\a_{44}\|^2=-\accentset{\sim}{K}^f_{\min}(x).
$$
Moreover, \eqref{c22} implies
$$
\<\a_{11},\a_{44}\>=-\accentset{\sim}{K}^f_{\min}(x).
$$
From these equalities and \eqref{c2}, we conclude 
that $\a_{11}=\a_{44}=\mathcal H_f(x)$. Therefore, 
$H^2(x)=-\accentset{\sim}{K}^f_{\min}(x)$, 
and $f$ is totally umbilical. 
\qed
\vspace{1.5ex}

\noindent\emph{Proof of Theorem \ref{n=4}:} 
The assertion follows directly from Theorems 
\ref{n=4p1f} and \ref{n=4p1inf}.
\qed 

\section{Submanifolds of space forms}
\noindent\emph{Proof of Theorem \ref{ck=2}:} 
Arguing by contradiction, suppose that $M^n$ is neither diffeomorphic to a 
spherical space form nor has its universal cover 
$\hat M^n$ isometric to a Riemannian product 
$\R\times N$, where $N$ is diffeomorphic to 
$\Sf^{n-1}$ and has nonnegative isotropic curvature. 
Then it follows from Theorem \ref{n5} that equality holds in 
\eqref{2}, and one of the cases in part $(iii)$ of that theorem 
occurs. 

Case $(a)$ is ruled out by the fact that the complex projective 
space, the quaternionic projective space and the Cayley plane 
(all endowed with their canonical Riemannian metrics) do not 
admit totally geodesic immersions into a space form.
In cases $(b)$-$(c)$, the ambient space is Euclidean, 
and the submanifold has positive index of relative nullity 
at each point. This contradicts the compactness of $M^n$ 
in both cases (see Corollary 1.6 in \cite{DT}). 
\iffalse
Therefore, $M^n$ is either diffeomorphic to a 
spherical space form or its universal cover 
$\hat M^n$ isometric to a Riemannian product 
$\R\times N$, where $N$ is diffeomorphic to 
$\Sf^{n-1}$ and has nonnegative isotropic curvature. 

Now assume that $M^n$ is diffeomorphic to a 
spherical space form. Observe that 
$$
S\le4c+\frac{n^2H^2}{n-2}\leq 
nc+\frac{n^3H^2}{4(n-2)}-\frac{n(n-4)}{4(n-2)}H\sqrt{
n^2H^2+8c(n-2)}.
$$
Hence, the immersion $f$ satisfies the pinching 
condition studied in \cite{v3} for $k=2$. Since 
the fundamental group of $M^n$ is finite, by Corollary 
4 in that paper that either $M^n$ is homeomorphic to 
$\Sf^n$, or equality holds in the above inequality at 
every point and the submanifold is the standard 
embedding of a torus in a sphere as in part $(i)$-$(ii)$ 
of that corollary. 
\fi
\qed 
\vspace{1ex}

\noindent\emph{Proof of Corollary \ref{cor}:} 
We observe that 
$$
S\leq2c+\frac{n^2H^2}{n-1}\leq4c+\frac{n^2H^2}{n-2}
$$
at every point. By Theorems \ref{ck=2} and 
\ref{k=2}, $M^n$ is either diffeomorphic to $\Sf^n$, 
or its universal cover is isometric to a Riemannian 
product $\R\times N$, where $N$ is diffeomorphic 
to $\Sf^{n-1}$, unless the submanifold is as in part 
$(iii)$ of Theorem \ref{k=2}. In the latter case, 
equality holds in the above inequality, implying 
$c=0$ and that $f$ is minimal, which is a contradiction. 

On the other hand, we also have 
\be\label{v3}
S\leq2c+\frac{n^2H^2}{n-1}\leq nc+\frac{n^3H^2}{2(n-1)}
-\frac{n(n-2)}{2(n-1)}H\sqrt{n^2H^2+4c(n-1)}. 
\ee
Hence, the submanifold satisfies the pinching 
condition studied in \cite{v3} for $k=1$. 
It then follows from Theorem 2 in that paper that either 
the universal cover of $M^n$ is homeomorphic to 
$\Sf^n$, or the above inequality holds with equality 
everywhere. In the latter case, we obtain $c=0$ and 
the submanifold is as described in parts $(i)$, $(ii)$ 
or $(iii)$ of that theorem, which contradicts \eqref{v3}, 
now holding as an equality. 
Therefore, $M^n$ is diffeomorphic to $\Sf^n$. 
\qed
\vspace{1ex}

\noindent\emph{Proof of Corollary \ref{cor1}:} 
By assumption, we have 
\be\label{2??}
S\leq\frac{16}{3}\big(\inf\accentset{\sim}{K}
-\frac{1}{4}\sup\accentset{\sim}{K}\big)
+\frac{n^2H^2}{n-2}
\leq\frac{16}{3}\big(\accentset{\sim}{K}^f_{\min}
-\frac{1}{4}\accentset{\sim}{K}^f_{\max}\big)
+\frac{n^2H^2}{n-2}.
\ee

Suppose that $M^n$ is neither diffeomorphic 
to a spherical space form, nor has its universal cover 
isometric to a Riemannian product $\R\times N$, 
where $N$ is diffeomorphic to $\Sf^{n-1}$. Then it 
follows from Theorem \ref{n5} that 
equality holds in \eqref{2??} everywhere. In 
particular, 
\be\label{infsup}
\accentset{\sim}{K}^f_{\min}(x)=\inf\accentset{\sim}{K},
\quad
\accentset{\sim}{K}^f_{\max}(x)=\sup\accentset{\sim}{K}
\quad\text{for all }x\in M^n. 
\ee
We distinguish two cases.

Assume first that the submanifold is as described in part 
$(iii$-$a)$ of Theorem \ref{n5}. Then $f$ is totally 
geodesic. It follows from \eqref{2??}, now 
holding as an equality, together with \eqref{infsup}, that 
$$
\inf\accentset{\sim}{K}=
\frac{1}{4}\sup\accentset{\sim}{K}>0.
$$
By the Bonnet-Myers theorem, the manifold 
$\accentset{\sim}{M}^{n+m}$ is compact. 
Clearly, $\accentset{\sim}{M}^{n+m}$ has weakly 
$1/4$-pinched sectional curvatures. By Theorem 1 
in \cite{BSacta} and the 
classical theorem due to Berger \cite{BSacta} 
and Klingenberg \cite{Kling}, $\accentset{\sim}{M}^{n+m}$
is either a rank-one symmetric space other than the sphere, 
or it is diffeomorphic but not isometric to the sphere $\Sf^{n+m}$.
\qed

\section{Examples of submanifolds satisfying the pinching condition}

In this section, we present several classes of submanifolds 
satisfying condition \eqref{1}, thereby illustrating the 
optimality of our results. In particular, we provide an abundance of geometrically 
distinct submanifolds that are diffeomorphic either to the 
sphere $\Sf^n$ or to the torus $\Sf^1 \times \Sf^{n-1}$, 
all of which satisfy condition \eqref{1}. 

There exist many submanifolds in Euclidean space 
that satisfy \eqref{1} and are diffeomorphic to a sphere. 
Indeed, let $f\colon M^n\to\R^{n+1},n\geq4$, be an ovaloid 
in Euclidean space with principal curvatures 
$0<\lambda_1\leq\dots\leq\lambda_n$. 
It follows from the inequalities 
$S\leq n\lambda^2_n$ and $H\geq\lambda_1$
that, if 
\be\label{lambda2}
\max\lambda _n\leq\min\lambda _1\big({\frac{n}{n-2}}\big)^{1/2}, 
\ee
then $f$ satisfies condition \eqref{1} at every point. In particular, 
if \eqref{lambda2} holds strictly, then \eqref{1} is satisfied as a strict 
inequality everywhere. By Hadamard’s classical theorem, 
every ovaloid in $\R^{n+1}$ is diffeomorphic to $\Sf^n$. 

A large class of ellipsoids provides concrete examples satisfying 
condition \eqref{lambda2}. Consider, for instance, the 
ellipsoid in $\R^{n+1}$ defined by 
$$ 
\frac{x_1^2}{a_1^2}+\cdots+
\frac{x_{n+1}^2}{a_{n+1}^2} = 1,
$$
where $0<a_1\leq\dots\leq a_{n+1}$. A 
straightforward computation shows that the 
minimum and the maximum of the principal 
curvatures of the ellipsoid are $a_1/a_{n+1}^2$ 
and $a_{n+1}/a_1^2$, respectively. It follows 
that condition \eqref{lambda2} holds provided
$$
a_{n+1}\leq a_1\big({\frac{n}{n-2}}\big)^{1/6}. 
$$

The next proposition presents geometrically 
distinct isometric immersions of manifolds diffeomorphic 
to the torus $\Sf^1 \times \Sf^{n-1}$, providing further 
examples that satisfy condition \eqref{1}. 

\begin{proposition}\label{ex}
Let $g\colon N^{n-1}\to\R^{m_1}, n\geq4$, be 
an isometric immersion of a manifold $N^{n-1}$ satisfying, 
at every point, 
$$
S_g<\frac{(n-1)^2}{n-2}H^2_g. 
$$
Let $\gamma\colon\Sf^1\to\R^{m_2}$ be a closed 
unit-speed curve whose first curvature $\kappa_1$ 
satisfies 
\be\label{k1}
\kappa_1^2\leq
\frac{n-2}{n-3}\min\Big(\frac{(n-1)^2}{n-2}H^2_g-S_g\Big).
\ee
Then the product immersion 
$
f=\gamma\times g\colon\Sf^1\times N^{n-1}\to\R^{m_1+m_2}
$ 
satisfies condition \eqref{1}. Moreover, $N^{n-1}$ is 
diffeomorphic to $\Sf^{n-1}$. 
\end{proposition}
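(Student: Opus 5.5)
The plan is a direct computation for the product immersion, followed by an application of the earlier sphere theorems to $g$.

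\emph{Step 1: second fundamental form of $f$.} The ambient space is Euclidean, so $\accentset{\sim}{K}^f_{\min}=\accentset{\sim}{K}^f_{\max}=0$ and \eqref{1} reduces to $S\le n^2H^2/(n-2)$. Since the factors sit in orthogonal subspaces $\R^{m_2}\oplus\R^{m_1}$, the second fundamental form of $f=\gamma\times g$ splits: $\a_f(\partial_t,\partial_t)=\kappa_1\nu$ with $\nu\in\R^{m_2}$ a unit normal; $\a_f(\partial_t,X)=0$ for $X\in TN$; and $\a_f(X,Y)=\a_g(X,Y)\in\R^{m_1}$. Consequently
$$
S=\kappa_1^2+S_g,\qquad n^2H^2=\kappa_1^2+(n-1)^2H_g^2,
$$
where the cross term vanishes because the two mean curvature contributions are orthogonal.

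\emph{Step 2: the pinching inequality.} Condition \eqref{1} becomes
$$
\kappa_1^2+S_g\le\frac{\kappa_1^2+(n-1)^2H_g^2}{n-2}.
$$
This is equivalent to
$$
\frac{n-3}{n-2}\kappa_1^2\le\frac{(n-1)^2}{n-2}H_g^2-S_g.
$$
The left side depends only on $t$ and the right side only on $y\in N$. Hence the inequality holds at every point precisely when \eqref{k1} holds, with the minimum taken over $N$. For this minimum to exist and be positive, $N$ must be compact, as is implicit in the statement. So $f$ satisfies \eqref{1}.

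\emph{Step 3: topology of $N$.} Here I would apply the sphere theorems to $g$ itself. If $n-1\ge5$, then $g$ satisfies \eqref{1} strictly in $\R^{m_1}$, where the ambient curvature terms vanish. I use that $\frac{(n-1)^2}{n-2}<\frac{(n-1)^2}{n-3}$, so the hypothesis is the strict $k=1$ Howard--Wei condition, which is even stronger than the $k=2$ condition \eqref{2} in dimension $n-1$. Corollary \ref{cor} then gives that $N$ is diffeomorphic to $\Sf^{n-1}$. This corollary applies for $n-1\ge4$ and requires orientability, which I would obtain by passing to the orientable double cover if needed.

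For the remaining case $n-1=3$, the hypothesis reads $S_g<\frac{9}{2}H_g^2$. Since $\frac{9}{2}\le\frac{9}{2}=\frac{(n-1)^2}{n-2}$, this is the $k=1$ pinching studied in \cite{v3} (Theorem 2 there), and the strict inequality rules out the equality cases. Alternatively, one can use the Gauss equation: the strict inequality $S_g<\frac{(n-1)^2}{n-2}H_g^2$ is the classical condition giving positive Ricci curvature for $g$. Hamilton's theorem then makes $N^3$ a spherical space form, and the Howard--Wei homotopy vanishing makes it simply connected, hence $\Sf^3$.

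\emph{Main obstacle.} The only delicate point is this topological step. It requires matching the strict hypothesis to an earlier sphere theorem in the correct dimension $n-1$, and handling $n=4$ separately.
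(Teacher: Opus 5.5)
Your Steps 1 and 2 are exactly the paper's proof, and for $\dim N\geq 4$ so is Step 3: the paper just says that Corollary \ref{cor} applies to $g$. The relevant point is that $S_g<\frac{(n-1)^2}{n-2}H_g^2$ is the strict form of the hypothesis of Corollary \ref{cor} in dimension $n-1$ with $c=0$. The detour through \eqref{1} for $g$ and the threshold $n-1\geq5$ play no role.

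Your separate treatment of $n=4$ is a genuine addition. Corollary \ref{cor} is stated only in dimensions at least $4$, and the paper applies it to the $3$-manifold $N$ without comment. Your Ricci route works, and it is worth writing out. For a unit vector $e_1$, put $a=\a_g(e_1,e_1)$, $b=(n-1)\mathcal H_g-a$ and $c^2=\sum_{j\geq2}\|\a_g(e_1,e_j)\|^2$. Then $S_g\geq\|a\|^2+2c^2+\|b\|^2/(n-2)$, and the hypothesis gives $2\<a,b\>>(n-3)\|a\|^2+2(n-2)c^2$. This yields two consequences:
\begin{itemize}
\item[(1)] $\Ric(e_1)=\<a,b\>-c^2>0$, so for $n=4$ Hamilton's theorem makes $N$ a spherical space form.
\item[(2)] Since $n\geq4$, also $2c^2-\<a,b\><0$. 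This is the Lawson--Simons condition for curves, so $N$ has no stable closed geodesics and $\pi_1(N)=0$.
\end{itemize}
Together these give $N\cong\Sf^3$ when $n=4$. Your first suggestion, Theorem 2 of \cite{v3}, is less secure. Its validity in dimension $3$ is not visible from this paper, and as used in the proof of Corollary \ref{cor} it only controls the universal cover.

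One step fails as written: passing to the orientable double cover. Suppose $N$ were non-orientable. Applying Corollary \ref{cor} to $g$ composed with the covering $\hat N\to N$ would only give $\hat N\cong\Sf^{n-1}$. A non-orientable manifold is never diffeomorphic to a sphere, so nothing about $N$ follows. The fix is the ingredient you already use for $n=4$. The computation above shows $\pi_1(N)=0$ in every dimension $n-1\geq3$, so $N$ is orientable and Corollary \ref{cor} applies to $g$ itself. The paper's one-line proof also invokes Corollary \ref{cor} without checking orientability, so this remark is needed there as well.
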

\proof
The squared length $S_f$ of the second fundamental form, 
and the mean curvature $H_f$ of the product immersion 
$f$ are given by 
$$
S_f=\kappa_1^2+S_g\quad {\text{and}}\quad 
n^2H^2_f=\kappa_1^2+(n-1)^2H^2_g. 
$$
From these expressions, it follows that condition 
\eqref{1} for $f$ is equivalent 
to inequality \eqref{k1}. Moreover, Corollary \ref{cor} implies that 
$N^{n-1}$ is diffeomorphic to $\Sf^{n-1}$.\qed
\vspace{1ex}

Next, we show that there exist many isometric 
immersions $g \colon N^{n-1} \to \R^m$ satisfying 
the hypotheses of Proposition \ref{ex}.
In particular, we construct geometrically distinct 
immersions of $\Sf^n$ into Euclidean space 
$\R^{n+1}$ that satisfy condition \eqref{1} for all 
$n\geq 3$, either with strict inequality 
or with equality.

Let $g\colon N^{n-1}\to\R^n,n\geq4$, be an ovaloid 
with principal curvatures 
$0<\lambda_1\leq\dots\leq\lambda_{n-1}$. 
It follows directly from the inequalities 
$S_g\leq (n-1)\lambda^2_{n-1}$ and $H_g\geq\lambda_1$
that, if 
\be\label{lambda}
\max\lambda _{n-1}<\min\lambda _1\big({\frac{n-1}{n-2}}\big)^{1/2}, 
\ee
then $g$ satisfies the hypotheses of Proposition \ref{ex}. 

A large class of ellipsoids provides concrete examples 
satisfying condition \eqref{lambda}. Specifically, the 
ellipsoid in $\R^n$ defined by 
$$ 
\frac{x_1^2}{a_1^2}+\cdots+
\frac{x_n^2}{a_n^2} = 1,
$$
where $0<a_1\leq\dots\leq a_n$, satisfies 
condition \eqref{lambda} provided that
$$
a_n<a_1\big({\frac{n-1}{n-2}}\big)^{1/6}. 
$$

Consequently, Proposition \ref{ex} 
provides geometrically distinct isometric 
immersions of manifolds diffeomorphic 
to the torus $\Sf^1 \times \Sf^{n-1}$ for 
$n\geq4$, which also satisfy condition 
\eqref{1}. Thus, we obtain numerous compact, 
geometrically distinct submanifolds that 
strictly satisfy \eqref{1} at every point. 
Moreover, this strict form is preserved under 
sufficiently small smooth deformations 
of any such example. 

\begin{example}[\bf{Totally geodesic submanifolds of 
a CROSS}]
\emph{As mentioned in the introduction, 
if an isometric immersion 
$f\colon M^n\to\accentset{\sim}{M}^{n+m}$
is totally geodesic and satisfies condition \eqref{1}, 
then the manifold $M^n$ has weakly $1/4$-pinched 
sectional curvatures. If in addition $M^n$ is compact, 
then by Theorem 1 in \cite{BSacta}, $M^n$ is either 
diffeomorphic to a spherical space form 
or locally symmetric. In case $M^n$ is locally symmetric and 
non-flat, then its universal cover is a CROSS (rank-one symmetric 
space), namely the complex projective space $\CP^{n/2}$, 
the quaternionic projective space $\HP^{n/4}$ and the 
Cayley plane $\OP^2$. 
}

\emph{In particular, any totally geodesic submanifold of 
a CROSS satisfies condition 
\eqref{1} and is itself a CROSS. 
We recall that the totally geodesic submanifolds of 
$\CP^n$ are 
$\CP^k\subset\CP^n,\RP^k\subset\CP^n, k<n$. 
The totally geodesic submanifolds of 
$\HP^n,\;\HP^k\subset\HP^n,\CP^k\subset\HP^n,
\RP^k\subset\HP^n, k<n$, and 
the totally geodesic submanifolds of the Cayley 
plane are 
$\HP^2\subset\OP^2,\CP^2\subset\OP^2, 
\Sf^k\subset\OP^2, k<8$.
}
\end{example}

\begin{example}[\bf{Weakly $1/4$-pinched manifolds}]\emph{
Let $M^n$ be a compact Riemannian manifold with 
weakly $1/4$-pinched sectional curvatures, and let 
$\accentset{\sim}{M}^{n+m}$ be the Riemannian 
product $\accentset{\sim}{M}^{n+m}=M^n\times N^m$, 
where $N^m$ is an arbitrary Riemannian manifold. For 
any point $x_0\in N^m$, consider the totally geodesic 
inclusion 
$$
f\colon M^n\to\accentset{\sim}{M}^{n+m},\quad f(x)=(x, x_0).
$$ 
It is immediate that $f$ satisfies condition \eqref{1}. 
}
\end{example}

\begin{example}\emph{
Let $f$ be the product immersion 
$$
f={\rm{Id}_{\mathbb T^2}}\times j\colon\mathbb T^2
\times\Sf^{n-2}(r)\to\mathbb T^2\times\R^{n-1},
$$ 
where ${\rm{Id}_{\mathbb T^2}}$ denotes the identity map 
of the flat $2$-torus $\mathbb T^2=\Sf^1\times\Sf^1$ 
and $j\colon\Sf^{n-2}(r)\to\mathbb \R^{n-1}$ 
is the standard umbilical inclusion of the sphere $\Sf^{n-2}(r)$. 
It is clear that $f$ satisfies the pinching 
condition \eqref{1}. In fact, this immersion is of the type 
described in part $(c)$ of Theorem \ref{n5} and in
part $(d)$ of Theorem \ref{n=4}.
}
\end{example}

\begin{example}[\bf{Totally geodesic submanifolds of flat manifolds}]\emph{
Any totally geodesic submanifold of a flat manifold trivially 
satisfies condition \eqref{1} and has infinite fundamental 
group. In fact, it is well known (see \cite{W}) that every 
compact flat Riemannian manifold 
$\accentset{\sim}{M}^{n+m}$ is a quotient 
$\R^{n+m}/\Gamma$, where $\Gamma$ is a Bieberbach 
group. Consequently, the classification of such manifolds 
reduces to the classification of Bieberbach groups, which 
are discrete groups of Euclidean isometries acting freely 
and cocompactly on $\R^{n+m}$. 
}

\emph{Every totally geodesic submanifold of 
$\R^{n+m}/\Gamma$ is the image of an affine subspace 
$A\subset\R^{n+m}$ such that 
$$
\Gamma_A=\left\{\gamma\in\Gamma: \gamma(A)=A \right\}
$$
acts cocompactly on $A$. Hence, any totally geodesic submanifold 
is itself a flat manifold of the form $A/\Gamma_A$. 
}
\end{example}

\begin{example}\emph{
A straightforward computation shows that the 
standard isometric embedding of the torus 
$$
g\colon\Sf^2(r)\times\Sf^2(\sqrt{R^2-r^2})\to 
\Sf^5(R)
$$ 
satisfies condition \eqref{1}. This 
example corresponds to the class described 
in part $(c)$ of Theorem \ref{n=4}.
}

\emph{Moreover, let 
$\accentset{\sim}{M}=\Sf^5(R)\times N$ be the Riemannian 
product with an arbitrary Riemannian manifold $N$. For 
any point $y_0\in N$, consider the embedding 
$$
f\colon\Sf^2(r)\times\Sf^2(\sqrt{R^2-r^2})
\to\accentset{\sim}{M},\quad f(x)=(g(x), y_0).
$$ 
It is immediate that $f$ is also of the type 
described in part $(c)$ of Theorem \ref{n=4}. 
}

\emph{
Finally, the 
standard isometric embedding 
$\Sf^1(r)\times\Sf^3(\sqrt{1-r^2})\hookrightarrow\Sf^5$ 
satisfies condition \eqref{1} whenever $r\geq1/2$, and 
its fundamental group is infinite. This example shows 
that the class of submanifolds appearing in part $(ii)$ 
of Theorem \ref{n=4} is nonempty. 
}
\end{example}

\begin{example}\emph{Let $\R^{n,1}$ denote the 
$(n+1)$-dimensional Lorentzian space equipped 
with the Lorentzian inner product $\<\cdot,\cdot\>$ 
of signature $(n,1)$. The hyperbolic space $\mathbb H^n(r)$ 
of constant sectional curvature $-1/r^2$ is realized as the 
hypersurface in $\R^{n,1}$ given by 
$$
\mathbb H^n(r)=\left\{x=(x_0,\dots,x_n)\in\R^{n,1}: 
\<x,x\>=-r^2,\; x_0>0\right\}.
$$
\indent
Let 
$$
i_1\colon\mathbb H^2(r_1)\to
\mathbb H^3(r_1)\subset\R^{3,1},\quad 
i_2\colon\Sf^2(R)\to\Sf^3(r_2)\subset\R^4
$$ 
be, respectively, a totally geodesic inclusion of a geodesic plane in 
$\mathbb H^3(r_1)$ and a totally umbilical inclusion of 
a sphere in $\Sf^3(r_2)$. Consider the 
exterior product of these immersions (see \cite{DT}). 
Given an orthogonal decomposition
$$
\R^{7,1}=\R^{3,1}\oplus\R^4, 
$$
where $\R^{3,1}$ carries the Lorentzian metric of 
signature $(3,1)$ and $\R^4$ carries the Euclidean 
metric, we choose the radii so that 
$-r_1^2+r_2^2=-r^2$. 
The exterior product of $i_1$ and $i_2$ is the 
immersion 
$$
g\colon \mathbb H^2(r_1)\times\Sf^2(R)\to
\mathbb H^7(r)\subset\R^{7,1}
$$
defined by
$$
g(x)=\left(i_1(x_1),i_2(x_2)\right), \quad x=(x_1,x_2).
$$
It is straightforward to verify that the immersion $g$ satisfies 
condition \eqref{1} with equality.
}

\emph{Now consider a compact hyperbolic $3$-manifold 
$\mathbb H^3(r_1)/\Gamma$, where $\Gamma$ is 
a discrete torsion-free subgroup of the isometry group 
$O(3,1)$ of $\mathbb H^3(r_1)$. Any 
totally geodesic plane 
$\mathbb H^2(r_1)\subset\mathbb H^3(r_1)$ that is 
invariant under $\Gamma$ gives rise 
to a totally geodesic immersion 
$j\colon\varSigma\to\mathbb H^3(r_1)/\Gamma$, 
where $\varSigma=\mathbb H^2(r_1)/\Gamma$ is 
a closed hyperbolic surface of genus $g\geq2$. 
}
\emph{Let $\mathsf{j}\colon O(3,1)\to O(7,1)$ be the 
block-diagonal inclusion 
$$
\mathsf{j}(A)=\begin{pmatrix}
A & 0\\
0 & \rm{Id}_{\R^4}
\end{pmatrix}. 
$$
Then $\mathbb H^7(r)/\mathsf{j}(\Gamma)$ is a 
smooth non-compact hyperbolic manifold, diffeomorphic to 
$\left(\mathbb H^3(r_1)/\Gamma\right)\times\R^4$. 
Moreover, the immersion $g$ constructed above 
descends to an immersion
$$
f\colon\varSigma\times\Sf^2(R)\to
\mathbb H^7(r)/\mathsf{j}(\Gamma).
$$
It is clear that $f$ satisfies condition \eqref{1} as an 
equality. In fact, the immersion $f$ is of the type 
described in part $(d)$ of Theorem \ref{n=4}.
}

\emph{Finally, let 
$\accentset{\sim}{M}$ be the Riemannian product 
$\accentset{\sim}{M}=
\left(\mathbb H^7(r)/\mathsf{j}(\Gamma)\right)\times N$, 
where $N$ is an arbitrary Riemannian manifold. For 
any point $y_0\in N$, consider the immersion 
$$
F\colon\varSigma\times\Sf^2(R)\to\accentset{\sim}{M},
\quad F(x)=(f(x), y_0).
$$ 
It is immediate that $F$ is also of the type described 
in part $(d)$ of Theorem \ref{n=4}. 
}
\end{example}

\begin{example}[\bf{Submanifolds of hyperbolic spaces satisfying} \eqref{1}]
\emph{
Let $g\colon M^n\to\R^{n+m-1}, n\geq5$, be an 
isometric immersion satisfying condition \eqref{1} strictly 
at every point. Let $c<0$ be a constant such that 
$$
c\geq\frac{n-2}{2(n-4)}\max\Big(S_g-\frac{n^2H^2_g}{n-2}\Big).
$$
Consider the isometric 
immersion 
$
f=j\circ g\colon M^n\to\mathbb H_c^{n+m},
$
where $j\colon\R^{n+m-1}\to\mathbb H_c^{n+m}$ is the 
umbilical immersion of $\R^{n+m-1}$ as a horosphere in the 
hyperbolic space $\mathbb H_c^{n+m}$ of curvature $c$. 
It is straightforward to verify that the immersion $f$ satisfies 
condition \eqref{1} at every point. 
}
\end{example}

\begin{example}[\bf{Submanifolds of hyperbolic manifolds satisfying} \eqref{1}]
\emph{
Let $\mathbb H^{n+m}(r)/\Gamma$ be a hyperbolic manifold with 
finite volume, where $\Gamma \subset O(n+m,1)$ is a discrete 
torsion-free subgroup, and let 
$\varSigma^n\subset \mathbb H^{n+1}(r)\subset \mathbb H^{n+m}(r)$ 
be a horosphere. Recall that a horosphere is isometric to Euclidean 
space $\mathbb R^n$. An isometry of $\mathbb H^{n+m}(r)$ preserves 
$\varSigma^n$ if and only if it is a parabolic element fixing the point 
on $\partial_\infty \mathbb H^{n+m}(r)$ that determines the 
horosphere. Consequently, any subgroup 
$\Gamma_0 \subset \Gamma$ that preserves $\varSigma^n$ 
consists entirely of parabolic elements with a common fixed point 
on $\partial_\infty \mathbb H^{n+m}(r)$, and its 
action on $\varSigma^n$ is by Euclidean isometries. 
}

\emph{
The quotient $M^n = \Sigma^n / \Gamma_0$ 
is compact if and only if $\Gamma_0$ is a cocompact discrete 
subgroup of the Euclidean isometry group of $\mathbb R^n$. By 
the Bieberbach theorem, this is equivalent to $\Gamma_0$ being 
virtually $\mathbb Z^n$. Therefore, such a compact quotient exists 
if and only if $\Gamma$ contains a rank-$n$ abelian subgroup of 
parabolic isometries fixing a common point on 
$\partial_\infty \mathbb H^{n+m}(r)$, assuming that $\Gamma$ 
is a lattice. Geometrically, this situation arises precisely when 
$\mathbb H^{n+m}(r)/\Gamma$ possesses a cusp. In that 
case, a maximal parabolic subgroup of $\Gamma$ acts 
cocompactly on a horosphere determined by the 
corresponding ideal point, and the resulting quotient 
is a compact flat manifold.} 

\emph{
The natural projection induces an isometric immersion
$
j \colon M^n \longrightarrow \mathbb H^{n+m}(r)/\Gamma.
$
Since horospheres in $\mathbb H^{n+1}(r)\subset\mathbb H^{n+m}(r)$ 
are totally umbilical when viewed as submanifolds of $\mathbb H^{n+m}(r)$, 
the immersion $j$ is totally umbilical and its mean curvature 
satisfies $H^2=-c$, where $c=-1/r^2$ is the curvature 
of the hyperbolic manifold. When $n=4$, these submanifolds 
are precisely the compact flat four-manifolds that arise as cusp 
cross-sections of finite-volume hyperbolic $(4+m)$-manifolds, 
and they are of the type described in part $(b)$ 
of Theorem~\ref{n=4}.
}
\end{example} 

\begin{example}[\bf{$(2,0)$-geodesic K\"ahler submanifolds}]\emph{
The standard minimal embedding $g\colon\CP^2_{4/3}\to\Sf^7$ 
of the complex projective plane $\CP^2_{4/3}$ with constant 
holomorphic curvature $4/3$ is a $(2,0)$-geodesic 
immersion and satisfies condition \eqref{1}. 
}

\emph{Moreover, let $\accentset{\sim}{M}=\Sf^7\times N^m$ 
be the Riemannian product of $\Sf^7$ with an arbitrary 
Riemannian manifold $N^m$. For any point 
$y_0\in N^m$, consider the embedding 
$$
f\colon \CP^2_{4/3}\to\accentset{\sim}{M},\quad f(x)=(g(x), y_0).
$$ 
It is immediate that $f$ is $(2,0)$-geodesic and satisfies condition 
\eqref{1}, so that it is as in part $(iii$-$a)$ of Theorem \ref{n=4}. 
}
\end{example}

\section{Appendix: Symmetric spaces}

It is well known that symmetric spaces of compact type have nonnegative isotropic curvature. Moreover, if the sectional curvature is non-constant, then at each point there exist orthonormal $4$-frames with vanishing complex sectional curvature.

Let $M$ be a symmetric space with $\dim M \ge 4$. 
For each point $p\in M$, denote by $\mathcal F_p$ the set of 
all orthonormal $4$-frames $F=\{e_i\}_{i=1}^4\subset T_pM$ 
with vanishing complex sectional curvature. Each $4$-frame 
$F\in\mathcal F_p$ spans a $4$-dimensional subspace 
$V_F=\spa\left\{e_i\right\}_{i=1}^4\subset T_pM$. 
The aim of this section is to provide a proof of the following result, 
which is used in the proof of the main theorems.

\begin{proposition}\label{fillup}
Let $M$ be a compact, simply connected, irreducible 
symmetric space with non-constant sectional curvature and 
$\dim M\geq5$. Then, at each point $p\in M$, the orthogonal complements 
$V_F^\perp\subset T_pM$ of the subspaces $V_F$, spanned 
by all frames $F\in\mathcal F_p$, satisfy
$$
\sum_{F\in \mathcal F_p}V_F^\perp= T_pM.
$$
\end{proposition}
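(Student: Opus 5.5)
The sum $W:=\sum_{F\in\mathcal F_p}V_F^\perp$ is a subspace of $T_pM$, and I will show it is invariant under the isotropy representation of $K=\mathrm{Stab}(p)$. Irreducibility of $M$ then makes the isotropy representation irreducible, so $W=0$ or $W=T_pM$. It therefore suffices to exhibit a single frame $F\in\mathcal F_p$; since $\dim M\geq5$, the space $V_F^\perp$ is then nonzero, so $W\neq0$ and hence $W=T_pM$.

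\emph{Invariance.} Every $k\in K$ acts on $T_pM$ as a linear isometry preserving the curvature tensor $R_p$, because $\nabla R=0$ and $k$ is an isometry fixing $p$. Consequently, if $F=\{e_i\}$ has vanishing complex sectional curvature $K^c(\sigma_F)=R_{1331}+R_{1441}+R_{2332}+R_{2442}-2R_{1234}=0$, then so does $kF=\{ke_i\}$. Thus $k\mathcal F_p=\mathcal F_p$. Moreover $V_{kF}^\perp=k(V_F^\perp)$, so $kW=W$. Because $M$ is simply connected and irreducible, the holonomy group at $p$ acts irreducibly on $T_pM$ (de Rham). For a symmetric space the restricted holonomy is contained in the identity component of $K$, and holonomy translations preserve $R$, so the same invariance argument can be run directly with the holonomy group. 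This gives $W=T_pM$ as soon as $\mathcal F_p\neq\emptyset$.

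\emph{Nonemptiness, the main step.} Symmetric spaces of compact type have nonnegative isotropic curvature, as recalled just before the statement. I need an isotropic plane with $K^c=0$ when the sectional curvature is not constant. The approach I would try first is a rank argument. If $\operatorname{rank}M\geq2$, take a maximal flat and orthonormal vectors $e_1,e_3$ spanning a flat $2$-plane, so that $R(e_1,e_3)=0$. I then want $e_2,e_4$ orthogonal to these, with $e_1,e_2,e_3,e_4$ orthonormal, such that the combination above vanishes. This is arranged by choosing $e_2,e_4$ inside a second abelian direction, for instance $e_2=$ a vector commuting with $e_1$, whose existence follows from root-space decomposition arguments. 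In practice it may be cleaner to use the Lie-algebraic formula $K^c(\sigma)=\|[\,X,Y\,]\|^2$ (up to a factor) for $X=e_1+\mathsf ie_2$, $Y=e_3+\mathsf ie_4$ in $\mathfrak p\otimes\C$. It then suffices to find a complex isotropic two-plane in $\mathfrak p^\C$ consisting of commuting elements. In rank $\geq2$ the complexification of a maximal abelian subspace $\mathfrak a$, together with suitable root vectors, gives such a plane. In rank one with non-constant curvature ($\CP^k$, $\HP^k$, $\OP^2$), I would instead write down an explicit frame, for $\CP^k$ for example $e_2=Je_1$, $e_4=-Je_3$ with $e_3\perp e_1,Je_1$. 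A direct computation with the Fubini–Study curvature gives $K^c=0$, and the cases $\HP^k$ and $\OP^2$ contain a totally geodesic $\CP^2$ in which the same frame works.

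The main obstacle is the rank $\geq2$ nonemptiness argument in full generality. The algebraic condition $[X,Y]=0$ with $X,Y$ isotropic and $\C$-independent in $\mathfrak p^\C$ needs a uniform construction across all irreducible types. I would take $X=H_1+\mathsf iH_2$ and $Y=H_3+\mathsf iH_4$ with $H_j\in\mathfrak a$ orthonormal when $\operatorname{rank}\geq4$. When the rank is $2$ or $3$, I would combine $\mathfrak a$ with root-space vectors $X_\alpha$ for a root $\alpha$ vanishing on a suitable element of $\mathfrak a$, and check isotropy case by case.
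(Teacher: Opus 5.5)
Your argument is the paper's: invariance of $\mathcal F_p$ under the isotropy group, irreducibility of the isotropy representation (or, as you note, of the holonomy), and $\dim M\geq5$ together with $\mathcal F_p\neq\emptyset$ to exclude the trivial alternative. The paper works with $\mathcal V_p=\bigcap_F V_F=W^\perp$ instead of $W$ itself, which is equivalent.

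The paper does not prove $\mathcal F_p\neq\emptyset$ either; it simply invokes the known fact, recalled just before the proposition, that compact symmetric spaces of non-constant curvature have isotropic planes with vanishing complex sectional curvature, so you may cite it the same way. Your own rank-by-rank sketch is therefore not needed, and as written it is complete only in rank one and in rank $\geq4$; ranks $2$ and $3$ are left unverified.
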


To prove this, we recall some basic facts about the isotropy representation 
of symmetric spaces.

\subsection{The isotropy representation}
Let $G$ be a Lie group with Lie algebra $\mathfrak g$.
For each $g \in G$ consider the conjugation map $
C_g\colon G\to G$ defined by 
$$
C_g(x) = gxg^{-1},\quad x\in G.
$$
This is a Lie group isomorphism whose 
differential 
$$
\mathrm{Ad}(g):=d(C_g)_e:
\mathfrak g \to \mathfrak g.
$$
at the identity $e\in G$ is a Lie algebra automorphism, 
that is 
$$
\mathrm{Ad}(g)[X,Y]
=\left[\mathrm{Ad}(g)X,\mathrm{Ad}(g)Y\right]
\quad\text{for all }X,Y \in \mathfrak g\:\text{ and } g\in G.
$$

Since $C_{gh}=C_g\circ C_h$, it follows that 
$\mathrm{Ad}(gh)=\mathrm{Ad}(g)\circ\mathrm{Ad}(h)$ and thus
$
\mathrm{Ad}\colon G\to GL(\mathfrak g)
$ 
is a Lie group homomorphism (see Chapter 3 in \cite{CE} or \cite{Hel}). 
Hence we obtain a representation
$$
\mathrm{Ad}\colon G \to \mathrm{Aut}(\mathfrak g)
$$
called the \emph{adjoint representation} of $G$. 
\vspace{0.5ex}

Let $M$ be a symmetric space. The isometry group 
$G=I(M)$ acts transitively on $M$. Fixing a 
point $p\in M$, the isotropy group $K$ is the 
stabilizer of $p$, that is $K=G_p=\{g\in G: g(p)=p\}$.
Then we may write $M=G/K$. 

Let $\mathfrak g = \mathfrak k \oplus \mathfrak p$
be the Cartan decomposition, where $\mathfrak g$ and 
$\mathfrak k$ denote the Lie algebras of $G$ and $K$, 
respectively. At a point $p=eK\in M$, we identify
$\mathfrak p \cong T_pM$. 
A fundamental property of symmetric spaces is 
$\mathrm{Ad}(K)\mathfrak p \subset \mathfrak p$. 
In particular, $[\mathfrak k,\mathfrak p]\subset \mathfrak p$.

The differential of each $k \in K$ at the base point 
$p=eK$ is an orthogonal transformation of $T_pM$ 
(see \cite{E}) and satisfies 
$dk_p=\mathrm{Ad}(k)\big|_{\mathfrak p}$. Therefore, 
the restriction
$$
\mathrm{Ad}(k)\big|_{\mathfrak p}
:
\mathfrak p \to \mathfrak p,\quad k\in K, 
$$
defines the \emph{isotropy representation} 
(see \cite{Besse} or \cite{E})
$$
\mathrm{Ad}|_K\colon K \to O(\mathfrak p).
$$
Thus the isotropy action on $T_pM$
is precisely the restriction of the adjoint representation.

For symmetric spaces, under the identification $\mathfrak p \cong T_pM$, 
the curvature tensor is given by 
$$
R(X,Y)Z = -[[X,Y],Z],
\qquad X,Y,Z \in \mathfrak p.
$$
Since $\mathrm{Ad}(k)$ preserves both the Lie brackets and the Cartan decomposition, 
we obtain 
$$
R\big(\mathrm{Ad}(k)X,\mathrm{Ad}(k)Y\big)\mathrm{Ad}(k)Z
=\mathrm{Ad}(k)R(X,Y)Z.
$$
Hence the curvature tensor is $K$-invariant.

It is well known (see Corollary 3 in \cite{E}, or
Corollary~6.10 in \cite{Z}) that, if $M=G/K$ is 
irreducible, then the isotropy representation
of $K$ on $\mathfrak p$ is irreducible.
In particular, if the isotropy representation of $K$ on 
$\mathfrak p$ is irreducible and $U\subset\mathfrak p$ 
is $K$-invariant a subspace, then either $U=\{0\}$ or 
$U=\mathfrak p$. 
\vspace{1ex}

\noindent\emph{Proof of Proposition \ref{fillup}:} 
Define
$$
\mathcal W_p:=\sum_{F\in\mathcal F_p} 
V_F^\perp \subset T_pM,\quad\mathcal V_p:
=\bigcap_{F\in\mathcal F_p} V_F.
$$
Taking orthogonal complements, we obtain
$\mathcal W_p^\perp=\mathcal V_p$. Hence, to prove 
that $\mathcal W_p=T_pM$, it suffices to show that 
$\mathcal V_p=\{0\}$. 

Let $\mathfrak g = \mathfrak k \oplus \mathfrak p$
be the Cartan decomposition, where
$\mathfrak g$ and $\mathfrak k$ 
denote the Lie algebras of $G$ and $K$, respectively.
Fix a point $p=eK\in M$ and identify $T_pM \simeq \mathfrak p$.
The isotropy group $K$ acts on $\mathfrak p \simeq T_p M$ via 
the isotropy representation
$$
k \cdot X = \mathrm{Ad}(k) X, \quad X \in \mathfrak p.
$$
Since $\mathrm{Ad}(k)$ preserves both the Lie brackets 
and the Cartan decomposition, the $(0,4)$-curvature tensor 
is $K$-invariant, that is
$$
R(kX,kY,kZ,kW) = R(X,Y,Z,W)
\quad\text{for all }k\in K\text{ and } X,Y,Z,W\in\mathfrak p.
$$
It follows that if $F \in \mathcal F_p$, then $kF \in \mathcal F_p$ 
for all $k\in K$. Therefore, 
$$
k(\mathcal V_p) 
= k\big(\bigcap_{F\in \mathcal F_p} V_F\big) 
= \bigcap_{F\in \mathcal F_p} k(V_F) 
= \bigcap_{F\in \mathcal F_p} V_{kF} 
= \mathcal V_p.
$$
Consequently, the subspace $\mathcal V_p$ is $K$-invariant. 
Since $M$ is irreducible, the isotropy representation of $K$ 
on $\mathfrak p$ is irreducible. Hence, any $K$-invariant 
subspace of $\mathfrak p$ is either $\{0\}$ or $\mathfrak p$. 

Compact symmetric space of non-constant sectional curvature 
admit $4$-frames at every point with vanishing isotropic 
curvature. In particular, $\mathcal F_p\neq\emptyset$. By 
definition, each $V_F$ is 4-dimensional, and therefore 
$\dim \mathcal V_p \le \dim V_F = 4$. 
Since $\dim M = \dim \mathfrak p \ge 5$, the case 
$\mathcal V_p = \mathfrak p$ is impossible. 
Hence, $\mathcal V_p=\{0\}$. 
\qed

\noindent Theodoros Vlachos\\
University of Ioannina \\
Department of Mathematics\\
45110 Ioannina -- Greece\\
e-mail: tvlachos@uoi.gr

\end{document}